\newtheorem{Theorem}{Theorem}[section]
\newtheorem{theorem}[Theorem]{Theorem}
\newtheorem{definition}[Theorem]{Definition}
\newtheorem{remark}[Theorem]{Remark}
\newtheorem{example}[Theorem]{Example}
\newtheorem{proposition}[Theorem]{Proposition}
\newtheorem{corollary}[Theorem]{Corollary}
\title{Representations and $\mathcal{O}$-operators of Hom-(pre)-Jacobi-Jordan algebras}
\author[Sylvain Attan ]
       { Sylvain Attan}
\begin{document}
\maketitle
\begin{abstract}
Representations and $\mathcal{O}$-operators  of Hom-(pre)-Jacobi-Jordan algebras are introduced and studied. The anticommutator of a Hom-pre-Jacobi-Jordan algebra is a Hom-Jacobi-Jordan algebra and the left multiplication operator gives a representation of a Hom-Jacobi-Jordan
algebra.  The notion of matched pairs and Nijenhuis operators of Hom-(pre)-Jacobi-Jordan algebras are given and various relevant constructions  are obtained.
\end{abstract}
{\bf 2010 Mathematics Subject Classification:} 16W10, 17A30, 17B10, 
17C50.

{\bf Keywords:} Hom-(pre)-Jacobi-Jordan algebras, matched pair, $\mathcal{O}$-operators, Nijenhuis operator.
\section{Introduction}
Jacobi-Jordan algebras are commutative algebras satisfying the Jacobi identity. 
Their first interest is motivated by the fact that they constitute an interesting sub-category  of the well-referenced category of Jordan algebras which were introduced to explain some aspects of physics \cite{pjjvn}. These algebras seem appeared first in \cite{kaz}, where an example of infinite-dimensional solvable but not nilpotent Jacobi-Jordan algebra was given. Since then, different names are used for these algebraic structures, indeed they are called for the first time mock-Lie algebras in \cite{egmk}, where the corresponding operad appears in the list of quadratic cyclic operads with one generator whereas the terms Jordan algebras of nil index 3 \cite{sw}, Lie-Jordan algebras \cite{sonk} and  finally Jacobi-Jordan algebras in the recent paper \cite{dbaf} are used.  Thanks to the approach of Eilenberg nicely described in \cite{nj},  representations of Jacobi-Jordan algebras are introduced in \cite{pzu} where many facts and conjectures about these algebras are made. As Pre-Lie algebras (also called left-symmetric algebras, quasi-associative algebras, Vinberg algebras and so on), left(resp. right)pre-Jacobi-Jordan algebras  \cite{cedh} are apparently first introduced in \cite{absb} under the name left skew-symmetric (resp. right skew-symmetric) algebras. The study of some relevant properties such as bimodules, matched pairs is considered for these algebras \cite{cedh}. Moreover, a $2$-dimensional classification and some double constructions of these algebras are given.
Observe that there is a close relationship between pre-Jacobi-Jordan algebras and Jacobi-Jordan: a pre-Jacobi-Jordan algebra $(A, \cdot)$ gives rise to a Jacobi-Jordan algebra $(A, \ast )$ via the anticommutator
multiplication, which is called the subadjacent Jacobi-Jordan algebra. Furthermore, for a given pre-Jacobi-Jordan algebra $(A, \cdot)$,the
map $L : A\rightarrow gl(A), $ defined by 
$L_x y=x\cdot y$ for all $x, y \in A,$ gives rise to a representation of its
subadjacent pre-Jacobi-Jordan algebra.

The theory of Hom-algebras was  first initiated  by D. Larsson, S. D. Silvestrov and J. T. Hartwig \cite{HAR1},\cite{dlsds1}, \cite{dlsds2} with the introduction of Hom-Lie algebras . 
 It is known that any associative algebra is Lie-admissible i.e., the commutator algebra of any associative algebra is a Lie algebra. To be in adequacy with this fact in the theory of Hom-algebras, Hom-associative algebras were introduced \cite{MAK3} and it was shown that the commutator Hom-algebra of any Hom-associative algebra is a Hom-Lie algebra.    Since then, other types of Hom-algebras  such  as
 Hom-Novikov algebras, Hom-alternative
algebras, Hom-Jordan algebras or Hom-Malcev algebras are defined and discussed in \cite{am}, \cite{dy1}, \cite{dy2}.

 The main objective of this paper is  the study of representations of Hom-(pre)-Jacobi-Jordan alegbras (see  \cite{sa1}, \cite{sa2}, \cite{ysc}, \cite{ys} for the study of representations of other Hom-algebras) as well as the  one of $\mathcal{O}$-operators, also known as relative or generalized Rota-Baxter operators on these Hom-algebras. First introduced in \cite{gb}  by Baxter for associative algebras,  Rota-Baxter operators have several applications in
probability \cite{gb}, combinatorics \cite{pc},\cite{lgwk}, \cite{gcr} and quantum field theory \cite{acdk}.  Rota-Baxter operators of weight $0$ for Lie algebras \cite{lg}  were introduced in terms of the classical Yang-Baxter equation  and later on, Kupershmidt \cite{bak} defined $\mathcal{O}$-operators as generalized Rota-Baxter operators to understand classical Yang-Baxter equations and related integrable systems.

The paper is organized as follows. 
Section 2 is devoted to reminders of fundamental concepts. In Section 3, we introduce the notion of Hom-Jacobi-Jordan algebra, provide some properties and define the notion of a representation, $\mathcal{O}$-operators and matched pairs  of a Hom-Jacobi-Jordan algebra. We prove that any Hom-Jacobi-Jordan algebra is a Hom-Jordan algebra. Moreover, we develop some constructions theorems about representations, $\mathcal{O}$-operators and matched pairs for these Hom-algebras. Finally, the last section contains many relevant results. First, we introduce Hom-pre-Jacobi-Jordan algebras and prove that the anticommutator of a Hom-pre-Jacobi-Jordan algebra is a Hom-Jacobi-Jordan
algebra. Next, we introduce the notion of a representation of a left Hom-pre-Jacobi-Jordan algebra and develop some constructions theorems. The notion of matched pairs and $\mathcal{O}$-operators of such Hom-algebras have also been introduced and interesting results have been obtained. 

Throughout this paper, all vector spaces and algebras are meant over a ground field $\mathbb{K}$ of characteristic 0.
\section{Basic results on Hom-(pre)-Jaobi-Jordan algebras}
This section is devoted to some definitions which are a very useful  for next sections. Some elementary results are also proven.
\begin{definition}
A Hom-module is a pair $(A,\alpha_M)$ consisting of a $\mathbb{K}$-module $A$ and 
a linear self-map $\alpha_A: A\longrightarrow A.$ A morphism 
$f: (A,\alpha_A)\longrightarrow (B,\alpha_B)$ of Hom-modules is a linear map 
 $f: A\longrightarrow B$ such that $f\circ\alpha_A=\alpha_B\circ f.$
\end{definition}
\begin{definition} A Hom-algebra is a triple $(A,\mu,\alpha)$ in which $(A,\alpha)$ is a Hom-module, $\mu: A^{\otimes 2}\longrightarrow A$ is a linear map.
The Hom-algebra $(A,\mu,\alpha)$ is said to be  multiplicative if $\alpha\circ\mu=\mu\circ\alpha^{\otimes 2}.$  A morphism 
$f: (A,\mu_A,\alpha_A)\longrightarrow (B,\mu_B,\alpha_B)$ of Hom-algebras is a morphism of the underlying Hom-modules such that $f\circ\mu_A=\mu_B\circ f^{\otimes 2}.$
\end{definition}
\begin{definition}
 Let $(A,\mu,\alpha)$ be a Hom-algebra and 
 $\lambda\in\mathbb{K}.$  Let $P$ be a linear map satisfying 
 \begin{eqnarray}
  \mu(P(x),P(y))=P(\mu(P(x),y)+\mu(x,P(y))+\lambda\mu(x,y)),\ \forall x,y \in A \label{Rota-Baxt}
 \end{eqnarray}
Then, $P$ is called a Rota-Baxter operator of weight $\lambda$ and $(A,\mu,\alpha, P)$ is called a Rota-Baxter Hom-algebra of weight 
$\lambda.$
\end{definition}
In the sequel, to unify our terminologies by a Rota-Baxter operator (resp. a Rota-Baxter Hom-algebra), we mean a Rota-Baxter operator (resp. a Rota-Baxter Hom-algebra) of weight $\lambda=0.$

\begin{definition}\cite{MAK3}
 A Hom-associative algebra is a multiplicative Hom-algebra $(A,\mu,\alpha)$
satisfying the Hom-associativity condition i.e.,
\begin{eqnarray}
 as_{\alpha}(x,y,z):=\mu(\mu(x,y),\alpha(z))-\mu(\alpha(x),\mu(y,z))=0\label{HAs} \mbox{ for all $x,y,z\in A$}
\end{eqnarray}
 \end{definition}
 \begin{definition}\cite{dy2}
  A Hom-Jordan algebra is a multiplicative Hom-algebra $(A,\ast,\alpha)$ such that the product "$\ast$" is commutative i.e., $x\ast y=y\ast x$ for all $x,y\in A$   and the following so-called Hom-Jordan identity holds
  \begin{eqnarray}
   as_{\alpha}(x\ast x, \alpha(y),\alpha(x))=0 \ \forall x,y\in A
  \end{eqnarray}
 \end{definition}
\section{Hom-Jacobi-Jordan algebras}
\begin{definition}
 A Hom-Jacobi-Jordan algebra is a multiplicative Hom-algebra $(A,\ast,\alpha)$ such that
 \begin{eqnarray}
  && x\ast y=y\ast x \mbox{ ( commutativity)}\nonumber\\
  && J_{\alpha}(x,y,z):=\circlearrowleft_{(x,y,z)}(x\ast y)\ast\alpha(z)=0\label{JJi}
 \end{eqnarray}
 where $\circlearrowleft_{(x,y,z)}$ is the sum over cyclic permutation of $x,y,z$ and 
 $J_{\alpha}$ is called the Hom-Jacobian.
\end{definition}
\begin{remark}
 If $\alpha=Id$ (identity map) in a Hom-Jacobi-Jordan algebra $(A,\ast,\alpha),$ then it reduces to a usual Jacobi-Jordan algebra $(A,\ast).$ It follows that the
category of  Hom-Jacobi-Jordan algebras contains the one of Jacobi-Jordan algebras.
\end{remark}
It is easy to prove the following:
\begin{proposition}
 Let $(A,\ast,\alpha)$ be a Hom-Jacobi-Jordan algebra and $\beta$ be a self-morphism of $(A,\ast,\alpha).$  Then 
 $A_{\beta^n}:=(A,\ast_{\beta^n}:=\beta^n\circ\ast,\beta^n\alpha)$ is a Hom-Jacobi-Jordan algebra for each $n\in\mathbb{N}.$
\end{proposition}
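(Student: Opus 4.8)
The plan is to verify the three defining conditions of a Hom-Jacobi-Jordan algebra for the twisted triple $A_{\beta^n}=(A,\ast_{\beta^n},\beta^n\alpha)$, where $\ast_{\beta^n}=\beta^n\circ\ast$. Throughout I will use that $\beta$ is a self-morphism of $(A,\ast,\alpha)$, which means $\beta\circ\alpha=\alpha\circ\beta$ and $\beta\circ\ast=\ast\circ\beta^{\otimes 2}$; by induction these give $\beta^n\circ\alpha=\alpha\circ\beta^n$ and $\beta^n\circ\ast=\ast\circ(\beta^n)^{\otimes 2}$ for every $n$.

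First I would check multiplicativity, i.e. $(\beta^n\alpha)\circ\ast_{\beta^n}=\ast_{\beta^n}\circ(\beta^n\alpha)^{\otimes 2}$. Writing out the left side gives $\beta^n\alpha\beta^n\circ\ast=\beta^{2n}\circ\alpha\circ\ast$ using commutativity of $\beta$ and $\alpha$, and then $\beta^{2n}\circ\ast\circ\alpha^{\otimes 2}$ by the multiplicativity of the original algebra. The right side is $\beta^n\circ\ast\circ(\beta^n\alpha)^{\otimes 2}=\beta^n\circ\beta^n\circ\ast\circ\alpha^{\otimes 2}=\beta^{2n}\circ\ast\circ\alpha^{\otimes 2}$, using that $\beta^n$ is an algebra morphism to pull it through $\ast$. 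So the two sides agree. Commutativity of $\ast_{\beta^n}$ is immediate, since $\ast_{\beta^n}(x,y)=\beta^n(x\ast y)=\beta^n(y\ast x)=\ast_{\beta^n}(y,x)$.

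The main content is the Hom-Jacobi identity for the new structure. I would compute one cyclic term of the new Hom-Jacobian: $\ast_{\beta^n}(\ast_{\beta^n}(x,y),\beta^n\alpha(z))=\beta^n\big(\beta^n(x\ast y)\ast\beta^n\alpha(z)\big)$, and then push the inner $\beta^n$ factors outside through $\ast$ using the morphism property, obtaining $\beta^{2n}\big((x\ast y)\ast\alpha(z)\big)$. Summing over the cyclic permutation of $(x,y,z)$ yields $\beta^{2n}\big(J_\alpha(x,y,z)\big)=\beta^{2n}(0)=0$, where the vanishing is exactly the Hom-Jacobi identity for the original algebra and the fact that $\beta^{2n}$ is linear.

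I expect no genuine obstacle here: the argument is entirely formal, resting on the interchange of $\beta^n$ with $\alpha$ and with $\ast$. The only point requiring care is the bookkeeping of the exponents on $\beta$—making sure that after pulling all $\beta$-factors out of each term one consistently obtains the same power $\beta^{2n}$ in every cyclic summand, so that $\beta^{2n}$ can be factored out of the whole sum. Once that is verified, applying the original identity $J_\alpha(x,y,z)=0$ finishes the proof. The induction establishing $\beta^n\circ\ast=\ast\circ(\beta^n)^{\otimes 2}$ from the single-morphism hypothesis could be stated once at the outset and then used freely.
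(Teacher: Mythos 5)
Your proof is correct and is exactly the direct verification the paper has in mind (the paper omits the proof, stating only that it is easy): each cyclic term of the new Hom-Jacobian factors as $\beta^{2n}$ applied to the corresponding term of $J_\alpha$, so the identity is inherited. The exponent bookkeeping you flag does work out uniformly to $\beta^{2n}$ in every summand, and the multiplicativity and commutativity checks are likewise sound.
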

\begin{example}
 (i) Let $(A,\ast)$ be a Jacobi-Jordan algebra and $\alpha$ be a self-morphism of $(A,\ast).$  Then 
 $A_{\alpha}:=(A,\ast_{\alpha}:=\alpha\circ\ast,\alpha)$ is a Hom-Jacobi-Jordan algebra.

 (ii) Consider the $4$-dimensional Jacobi-Jordan algebra $\mathcal{A}:=(A,\ast)$ \cite{dbaf} where non-zero products with respect to a basis $(e_1,e_2,e_3,e_4)$ are given by: $e_1\ast e_1:=e_2;\ e_1\ast e_4=e_4\ast e_1:=e_4.$ Then,  the linear map $\alpha$ defined by
 $\alpha(e_1):=-e_1-e_3;\ \alpha(e_2):=a_{12}e_1+e_2+2e_4;\ \alpha(e_3):=e_1+a_{23}e_2+e_3; \alpha(e_4):=a_{14}e_1-e_2+a_{34}e_3-e_4$ is a self-morphism of $\mathcal{A}$ for all scalars $a_{12}, a_{23}, a_{14}, a_{34}.$ Hence,  $\mathcal{A}_{\alpha}:=(A,\ast_{\alpha}=\alpha\circ\ast,\alpha)$ is a Hom-Jacobi-Jordan algebra where non-zero products are: $e_1\ast_{\alpha} e_1:=a_{12}e_1+e_2+2e_4;\ e_1\ast_{\alpha} e_4=e_4\ast_{\alpha} e_1=:=a_{14}e_1-e_2+a_{34}e_3-e_4.$
 
\end{example}

\begin{proposition}
 Any Hom-Jacobi-Jordan algebra is a Hom-Jordan algebra.
\end{proposition}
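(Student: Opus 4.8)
The goal is to check that a Hom-Jacobi-Jordan algebra $(A,\ast,\alpha)$ is commutative (true by hypothesis) and satisfies the Hom-Jordan identity $as_{\alpha}(x\ast x,\alpha(y),\alpha(x))=0$. Writing $x^2:=x\ast x$, unfolding $as_{\alpha}(a,b,c)=(a\ast b)\ast\alpha(c)-\alpha(a)\ast(b\ast c)$, and using multiplicativity (so that $\alpha(x^2)=\alpha(x)\ast\alpha(x)=:\alpha(x)^2$), the identity to be proved reads
\[
(x^2\ast\alpha(y))\ast\alpha^2(x)=\alpha(x)^2\ast(\alpha(y)\ast\alpha(x)).
\]
The plan is to derive this from the Hom-Jacobian $J_{\alpha}$ of \eqref{JJi}, evaluated at three carefully chosen triples, together with commutativity, multiplicativity, and $\mathrm{char}\,\mathbb{K}=0$.

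First I would record two specializations of $J_{\alpha}=0$. The diagonal triple $(x,x,x)$ collapses the cyclic sum to $3\,(x^2\ast\alpha(x))$, giving $x^2\ast\alpha(x)=0$ (the Hom-analogue of nil index $3$). The triple $(x,y,x)$, after using commutativity to merge two cyclic terms, yields the polarized identity $x^2\ast\alpha(y)=-2\,(x\ast y)\ast\alpha(x)$. Next I would apply $J_{\alpha}=0$ to $(x^2,\alpha(y),\alpha(x))$: by multiplicativity and commutativity its three cyclic terms are precisely the two sides of the target identity together with $(\alpha(x)\ast x^2)\ast\alpha^2(y)$, and the last term vanishes since $\alpha(x)\ast x^2=x^2\ast\alpha(x)=0$. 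This yields $(x^2\ast\alpha(y))\ast\alpha^2(x)=-\,\alpha(x)^2\ast(\alpha(y)\ast\alpha(x))$, i.e. $\mathrm{LHS}=-\mathrm{RHS}$.

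To finish I would establish independently that $\mathrm{LHS}=+\mathrm{RHS}$. Substituting the polarized identity rewrites the left-hand side as $-2\,((x\ast y)\ast\alpha(x))\ast\alpha^2(x)$; applying $J_{\alpha}=0$ to $(x\ast y,\alpha(x),\alpha(x))$ and collapsing by commutativity expresses $((x\ast y)\ast\alpha(x))\ast\alpha^2(x)$ through $\alpha(x)^2\ast(\alpha(x)\ast\alpha(y))=\mathrm{RHS}$, giving $\mathrm{LHS}=\mathrm{RHS}$. Comparing the two relations forces $2\,\mathrm{RHS}=0$, so both sides vanish and $as_{\alpha}(x^2,\alpha(y),\alpha(x))=\mathrm{LHS}-\mathrm{RHS}=0$.

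The calculations themselves are routine; the delicate point, where I expect to spend the most care, is selecting the three input triples so that multiplicativity matches the $\alpha$-degrees and every unwanted cross-term either coincides with $\mathrm{LHS}$ or $\mathrm{RHS}$ or is annihilated by $x^2\ast\alpha(x)=0$. The classical skeleton, namely $x^3=0$ and $x^2\ast(y\ast x)=0$, guides these choices, and the characteristic-zero hypothesis enters only to cancel the integers $2$ and $3$.
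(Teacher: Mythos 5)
Your proof is correct, and its essential engine coincides with the paper's: your polarized identity $(x\ast x)\ast\alpha(y)=-2(x\ast y)\ast\alpha(x)$ is exactly \eqref{cHJ1}, and your final step --- evaluating $J_{\alpha}$ at $(x\ast y,\alpha(x),\alpha(x))$ and feeding the polarized identity back in --- is precisely the paper's substitution $x\mapsto\alpha(x)$, $y\mapsto x\ast y$ into \eqref{cHJ1}, which already yields $\mathrm{LHS}=\mathrm{RHS}$ and hence the Hom-Jordan identity. The additional detour through the cube relation $(x\ast x)\ast\alpha(x)=0$ and the triple $(x\ast x,\alpha(y),\alpha(x))$ to get $\mathrm{LHS}=-\mathrm{RHS}$ is therefore redundant for the statement, though it does establish the correct stronger fact that both terms of the associator vanish separately.
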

\begin{proof}
 Let $(A,\ast,\alpha)$ be a Hom-Jacobi-Jordan algebra. Then, by (\ref{JJi}) with $z=x$ we obtain by the ommutativity of $\ast$
 \begin{eqnarray}
  (x\ast x)\ast\alpha(y)+2\alpha(x)\ast(x\ast y)=0 \mbox{ for all $x,y\in A.$}\label{cHJ1}
 \end{eqnarray}
 Now, if one replaces $x$ by $\alpha(x)$ in 
 (\ref{cHJ1}), we get by the multiplicativity of $\alpha$
 with respect to $\ast$
 \begin{eqnarray}
  \alpha(x\ast x)\ast\alpha(y)+2\alpha^2(x)\ast(\alpha(x)\ast y)=0  
  \mbox{ for all $x,y\in A.$}
  \label{cHJ2}
 \end{eqnarray}
 Replacing $y$ by $x\ast y$ in (\ref{cHJ2}) yields by the multiplicativity of $\alpha$
 with respect to $\ast$
 \begin{eqnarray}
  && \alpha(x\ast x)\ast(\alpha(x)\ast\alpha(y))=-2\alpha^2(x)\ast\Big( \alpha(x)\ast(x\ast y)\Big)=\alpha^2(x)\ast\Big(-2\alpha(x)\ast(x\ast y) \Big)\nonumber\\
  &&=\alpha^2(x)\ast\Big((x\ast x)\ast\alpha(y) \Big) \mbox{ ( by (\ref{cHJ1}) ).}\nonumber
 \end{eqnarray}
The Hom-Jordan identity follows by the commutativity of $\ast.$
\end{proof}
We know that the plus Hom-algebra of any Hom-associative algebra is a Hom-Jordan algebra i.e., any Hom-associative algebra is Hom-Jordan admissible \cite{sa1}. In Hom-Jacobi-Jordan algebras case, this is not true, indeed:
\begin{proposition}
 Let $(A,\cdot,\alpha)$ be a Hom-associative algebra. Then $(A,\star, \alpha)$ is a Hom-Jacobi-Jordan algebra if and only if  
 $2\circlearrowleft_{(x,y,z)}\Big((x\cdot y)\cdot\alpha(z)+(y\cdot x)\cdot\alpha(z) \Big)=0$ for all $x,y,z\in A$ where 
 $x\star y=x\cdot y+y\cdot x.$
\end{proposition}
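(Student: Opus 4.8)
The plan is to observe that the only nontrivial axiom to verify is the vanishing of the Hom-Jacobian for $\star$, since the other two requirements of a Hom-Jacobi-Jordan algebra come for free. Indeed, commutativity of $\star$ is immediate from $x\cdot y+y\cdot x=y\cdot x+x\cdot y$, and $\alpha$ is automatically multiplicative with respect to $\star$ because, by the multiplicativity of $(A,\cdot,\alpha)$,
\begin{align*}
\alpha(x\star y)=\alpha(x\cdot y)+\alpha(y\cdot x)=\alpha(x)\cdot\alpha(y)+\alpha(y)\cdot\alpha(x)=\alpha(x)\star\alpha(y).
\end{align*}
Hence $(A,\star,\alpha)$ is a Hom-Jacobi-Jordan algebra if and only if $J_{\alpha}(x,y,z)=0$ for $\star$, and the whole proof reduces to showing that this Hom-Jacobian equals the displayed expression, whereupon the stated equivalence is immediate.

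First I would expand a single cyclic term using the definition of $\star$ twice, obtaining
\begin{align*}
(x\star y)\star\alpha(z)=(x\cdot y+y\cdot x)\cdot\alpha(z)+\alpha(z)\cdot(x\cdot y+y\cdot x).
\end{align*}
This yields two right-multiplications by $\alpha(z)$, which already match the form on the right-hand side of the statement, together with two left-multiplications by $\alpha(z)$. The key step is then to eliminate the latter by Hom-associativity: applying $as_{\alpha}(z,x,y)=0$ and $as_{\alpha}(z,y,x)=0$ gives $\alpha(z)\cdot(x\cdot y)=(z\cdot x)\cdot\alpha(y)$ and $\alpha(z)\cdot(y\cdot x)=(z\cdot y)\cdot\alpha(x)$, so that every summand becomes a right-associated product of the shape $(a\cdot b)\cdot\alpha(c)$. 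It remains to apply $\circlearrowleft_{(x,y,z)}$ to the four resulting terms and to use the invariance of the cyclic sum under cyclic relabeling, from which one checks that $\circlearrowleft_{(x,y,z)}(z\cdot x)\cdot\alpha(y)=\circlearrowleft_{(x,y,z)}(x\cdot y)\cdot\alpha(z)$ and $\circlearrowleft_{(x,y,z)}(z\cdot y)\cdot\alpha(x)=\circlearrowleft_{(x,y,z)}(y\cdot x)\cdot\alpha(z)$. Thus the two converted cyclic sums coincide with the two original ones, and $J_{\alpha}(x,y,z)$ collapses precisely to $2\circlearrowleft_{(x,y,z)}\big((x\cdot y)\cdot\alpha(z)+(y\cdot x)\cdot\alpha(z)\big)$.

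I expect the main obstacle to be organizational rather than conceptual: one must carefully track the six monomials produced by each cyclic shift and confirm that the Hom-associativity rewriting of the two left-multiplication terms produces cyclic sums identical to those coming from the two right-multiplication terms. Once this bookkeeping is carried out, the factor $2$ emerges on its own and the equivalence follows at once; this also transparently explains why, unlike the Hom-Jordan case, a Hom-associative algebra is not automatically Hom-Jacobi-Jordan admissible, the obstruction being exactly the nonvanishing of this symmetrized cyclic sum.
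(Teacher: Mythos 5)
Your proposal is correct and follows essentially the same route as the paper: expand the cyclic sum of $(x\star y)\star\alpha(z)$ into its twelve monomials, use Hom-associativity to turn each left-multiplication term $\alpha(z)\cdot(x\cdot y)=(z\cdot x)\cdot\alpha(y)$ into a right-associated one, and regroup by cyclic invariance to get the factor $2$. Your explicit verification of commutativity and multiplicativity of $\star$, and the cyclic-relabeling bookkeeping, are just slightly more detailed versions of what the paper leaves implicit.
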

\begin{proof}
 By straightforward computations for all $x,y,z\in A,$
 \begin{eqnarray}
  &&\circlearrowleft_{(x,y,z)}(x\star y)\star\alpha(z)=(x\cdot y)\cdot\alpha(z)+(y\cdot x)\cdot\alpha(z)+\alpha(z)\cdot(x\cdot y)+\alpha(z)\cdot(y\cdot x)\nonumber\\
  &&+(y\cdot z)\cdot\alpha(x)+(z\cdot y)\cdot\alpha(x)+\alpha(x)\cdot(y\cdot z)+\alpha(x)\cdot(z\cdot y)+(z\cdot x)\cdot\alpha(y)+(x\cdot z)\cdot\alpha(y)\nonumber\\
  &&+\alpha(y)\cdot(z\cdot x)+\alpha(y)\cdot(x\cdot z)=2\circlearrowleft_{(x,y,z)}\Big((x\cdot y)\cdot\alpha(z)+(y\cdot x)\cdot\alpha(z) \Big) \mbox{ ( by Hom-associativity  ). }\nonumber
 \end{eqnarray}
\end{proof}
Now, we give the definition of representations of a Hom-Jacobi-Jordan algebra.
\begin{definition}
 A representation of a Hom-Jacobi-Jordan algebra $(A, \ast, \alpha)$  is a triple $(V,\rho,\phi)$ where $V$ is a vector space, $\phi\in gl(V)$ and $\rho: A\rightarrow gl(V)$ is a linear map
such that the following equalities hold for all $x, y \in A:$
\begin{eqnarray}
\phi\rho(x)=\rho(\alpha(x))\phi; \label{rHJJ1}\\
 \rho(x\ast y)\phi=-\rho(\alpha(x))\rho(y)-\rho(\alpha(y))\rho(x) \mbox{  for all $x,y\in A$ }\label{rHJJ2}
\end{eqnarray}
\end{definition}
\begin{remark}
 If $\alpha=Id_A,\ \phi=Id_V,$ then $(V,\rho)$ is a representation of the Jacobi-Jordan algebra $(A,\cdot)$ \cite{pzu}.
\end{remark}
Hence, we get the following first example.
\begin{example}
 Let $(A, \cdot)$ be a  Jacobi-Jordan algebra and $(V,\rho)$ be  a representation of $(A,\cdot)$ in the usual sense. Then $(V, \rho,Id_V)$ is a representation of the Hom-Jacobi-Jordan algebra $\mathcal{A}:=(A,\cdot, Id_A).$
\end{example}
To give other examples of representations of Hom-Jacobi-Jordan algebras, let prove the following:
\begin{proposition}\label{PEx0}
 Let $\mathcal{A}_1:=(A_1, \ast_1, \alpha_1)$ and $\mathcal{A}_2:=(A_2, \ast_2, \alpha_2)$ be two Hom-Jacobi-Jordan algebras and $f:\mathcal{A}_1\rightarrow \mathcal{A}_2$ be a morphism of Hom-Jacobi-Jordan algebras. Then $A_2^f:=(A_2, \rho,\alpha_2)$ is a representation of $\mathcal{A}_1$ where
 $\rho(a)b:=f(a)\ast_2 b$ for all $(a,b)\in A_1\times A_2.$
\end{proposition}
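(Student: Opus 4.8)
The plan is to verify directly that the two axioms (\ref{rHJJ1}) and (\ref{rHJJ2}) defining a representation hold for the triple $(A_2,\rho,\alpha_2)$ with $\phi=\alpha_2$, using only the morphism property of $f$ (namely $f\circ\alpha_1=\alpha_2\circ f$ and $f(x\ast_1 y)=f(x)\ast_2 f(y)$), the multiplicativity of $\alpha_2$ with respect to $\ast_2$, and the defining identities (commutativity and the vanishing of the Hom-Jacobian (\ref{JJi})) of the target algebra $\mathcal{A}_2$.

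First I would establish (\ref{rHJJ1}). Fix $a\in A_1$ and $b\in A_2$. Then, by multiplicativity of $\alpha_2$ and the fact that $f$ intertwines the structure maps,
\[
\alpha_2(\rho(a)b)=\alpha_2(f(a)\ast_2 b)=\alpha_2(f(a))\ast_2\alpha_2(b)=f(\alpha_1(a))\ast_2\alpha_2(b)=\rho(\alpha_1(a))\alpha_2(b).
\]
Thus $\alpha_2\rho(a)=\rho(\alpha_1(a))\alpha_2$, which is exactly (\ref{rHJJ1}).

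The substantive step is (\ref{rHJJ2}). Fix $x,y\in A_1$ and $b\in A_2$, and write $u:=f(x)$, $v:=f(y)$. Since $f$ is an algebra morphism, the left-hand side transports into $A_2$ as
\[
\rho(x\ast_1 y)\alpha_2(b)=f(x\ast_1 y)\ast_2\alpha_2(b)=(u\ast_2 v)\ast_2\alpha_2(b),
\]
while, using $f\circ\alpha_1=\alpha_2\circ f$, the right-hand side becomes
\[
-\rho(\alpha_1(x))\rho(y)b-\rho(\alpha_1(y))\rho(x)b=-\alpha_2(u)\ast_2(v\ast_2 b)-\alpha_2(v)\ast_2(u\ast_2 b).
\]
Hence the required identity reduces to
\[
(u\ast_2 v)\ast_2\alpha_2(b)+\alpha_2(u)\ast_2(v\ast_2 b)+\alpha_2(v)\ast_2(u\ast_2 b)=0.
\]
The key observation is that this is precisely $J_{\alpha_2}(u,v,b)=0$ from (\ref{JJi}), rewritten via commutativity: the cyclic summands $(v\ast_2 b)\ast_2\alpha_2(u)$ and $(b\ast_2 u)\ast_2\alpha_2(v)$ equal $\alpha_2(u)\ast_2(v\ast_2 b)$ and $\alpha_2(v)\ast_2(u\ast_2 b)$ respectively. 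This gives (\ref{rHJJ2}) and completes the verification.

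I expect the only real obstacle to be the careful bookkeeping in matching the two sides of (\ref{rHJJ2}): one must push both sides through $f$ into $A_2$, apply $f\circ\alpha_1=\alpha_2\circ f$ on the right, and then recognize that after using commutativity the statement is nothing more than the Hom-Jacobi-Jordan identity of $\mathcal{A}_2$. No genuinely new computation is needed beyond this identification.
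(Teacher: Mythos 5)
Your proof is correct and follows essentially the same route as the paper's: verify (\ref{rHJJ1}) from multiplicativity and $f\circ\alpha_1=\alpha_2\circ f$, then reduce (\ref{rHJJ2}) to the Hom-Jacobi identity $J_{\alpha_2}(f(x),f(y),b)=0$ in $\mathcal{A}_2$ combined with commutativity. No substantive difference to report.
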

\begin{proof}
 Fist, we have for all $(x,b)\in A_1\times A_2,$
 \begin{eqnarray}
  \alpha_2(\rho(x)b)=\alpha_2(f(x))\ast_2\alpha_2(b)=f(\alpha_1(x))\ast_2\alpha_2(b)=\rho(\alpha_1(x))\alpha_2(b),\nonumber
 \end{eqnarray}
i.e., $\alpha_2(\rho(x))=\rho(\alpha_1(x))\alpha_2.$ Next, for all $(x,y)\in A_1^{\times 2}$ and $b\in A_2,$ since $f$ is a morphism we have:
\begin{eqnarray}
 &&\rho(x\ast_1 y)\alpha_2(b)=f(x\ast_1 y)\ast_2\alpha_2(b)=(f(x)\ast_2 f(y))\ast_2\alpha_2(b)\nonumber\\
 &&=-(f(y)\ast_2 b)\ast_2\alpha_2f(x)-(b\ast_2 f(x))\ast_2\alpha_2f(y) \mbox{ ( by (\ref{JJi}) )}\nonumber\\
 &&-f(\alpha_1(x))\ast_2(f(y)\ast_2 b)-f(\alpha_1(y))\ast_2(f(x)\ast_2 b) \mbox{ ( commutativity of $\ast_2$ )}\nonumber\\
 &&-\rho(\alpha_1(x))\rho(y)b-\rho(\alpha_1(y))\rho(x)b.\nonumber
\end{eqnarray}
Hence, $\rho(x\ast_1 y)\alpha_2=-\rho(\alpha_1(x))\rho(y)-\rho(\alpha_1(y))\rho(x).$
\end{proof}
Now, using Proposition \ref{PEx0}, we obtain the following example as applications.
\begin{example}
\begin{enumerate}
 \item Let $(A,\ast,\alpha)$ be a Hom-Jacobi-Jordan algebra. Define a left multiplication 
 $ L: A\rightarrow gl(A)$  by $L(x)y:=x\ast y$  for all $x, y \in A.$ Then $(A , L, \alpha)$ is a representation of $(A, \ast, \alpha),$  called a regular representation
 \item Let $(A,\ast,\alpha)$ be a Hom-Jacobi-Jordan algebra and $(B,\alpha)$ be a  Hom-ideal of $(A,\ast,\alpha).$ Then $(B,\alpha)$  inherits a structure of representation of $(A,\ast,\alpha)$ where 
 $\rho(a)b:=a\ast b$ for all $(a,b)\in A\times B.$
\end{enumerate}
\end{example}
The following result can be proven easily.
\begin{proposition}\label{sRHas}
 Let $\mathcal{V}:=(V, \rho,\phi)$ be a representation of a Hom-Jacobi-Jordan algebra $\mathcal{A}:=(A,\ast,\alpha)$ and $\beta$ be a self-morphism of $\mathcal{A}$. Then 
 $\mathcal{V}_{\beta^n}=:(V,\rho_{\beta^n}:=\rho\beta^n,\phi)$ is a representation of $\mathcal{A}$  for each $n\in\mathbb{N}.$ In particular,$\mathcal{V}_{\alpha^n}=:(V,\rho_{\alpha^n}:=\rho\alpha^n,\phi)$ is a representation of $\mathcal{A}$  for each $n\in\mathbb{N}.$  
\end{proposition}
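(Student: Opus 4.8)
The plan is to verify directly that the triple $(V,\rho_{\beta^n},\phi)$ with $\rho_{\beta^n}=\rho\beta^n$ satisfies the two defining identities (\ref{rHJJ1}) and (\ref{rHJJ2}) of a representation, relying throughout on the fact that $\beta$ is a self-morphism of $\mathcal{A}$. The first preliminary observation I would record is that, being a morphism of Hom-algebras, $\beta$ commutes with the structure map, $\beta\circ\alpha=\alpha\circ\beta$, and intertwines the product, $\beta(x\ast y)=\beta(x)\ast\beta(y)$. Iterating these two relations gives $\beta^n\circ\alpha=\alpha\circ\beta^n$ and $\beta^n(x\ast y)=\beta^n(x)\ast\beta^n(y)$ for every $n\in\mathbb{N}$, and these are the only two facts about $\beta^n$ that the argument will use.

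For the first identity, I would apply (\ref{rHJJ1}) for the original representation $\rho$ to the element $\beta^n(x)$, obtaining $\phi\,\rho(\beta^n(x))=\rho(\alpha(\beta^n(x)))\,\phi$, and then rewrite $\alpha(\beta^n(x))=\beta^n(\alpha(x))$ by the commutation relation above. This reads exactly as $\phi\,\rho_{\beta^n}(x)=\rho_{\beta^n}(\alpha(x))\,\phi$, which is (\ref{rHJJ1}) for $\rho_{\beta^n}$.

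For the second identity, I would start from $\rho_{\beta^n}(x\ast y)\,\phi=\rho(\beta^n(x\ast y))\,\phi$ and use the morphism property to replace $\beta^n(x\ast y)$ by $\beta^n(x)\ast\beta^n(y)$. Applying (\ref{rHJJ2}) for $\rho$ with arguments $\beta^n(x)$ and $\beta^n(y)$ then yields
\[
\rho(\beta^n(x)\ast\beta^n(y))\,\phi=-\rho(\alpha(\beta^n(x)))\rho(\beta^n(y))-\rho(\alpha(\beta^n(y)))\rho(\beta^n(x)).
\]
A final commutation of $\alpha$ past $\beta^n$ on each term turns the right-hand side into $-\rho_{\beta^n}(\alpha(x))\rho_{\beta^n}(y)-\rho_{\beta^n}(\alpha(y))\rho_{\beta^n}(x)$, establishing (\ref{rHJJ2}) for $\rho_{\beta^n}$.

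This completes the verification, and the ``in particular'' clause follows immediately by taking $\beta=\alpha$, since the multiplicativity of $\alpha$ with respect to $\ast$ makes $\alpha$ a self-morphism of $\mathcal{A}$. I do not anticipate a genuine obstacle here: the statement is a routine stability check, and the only point requiring any care is the bookkeeping that promotes the two morphism relations for $\beta$ to the corresponding relations for $\beta^n$, which is a straightforward induction on $n$.
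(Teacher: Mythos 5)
Your verification is correct: both defining identities (\ref{rHJJ1}) and (\ref{rHJJ2}) for $\rho_{\beta^n}$ follow exactly as you describe from the two morphism properties $\beta^n\alpha=\alpha\beta^n$ and $\beta^n(x\ast y)=\beta^n(x)\ast\beta^n(y)$, and the ``in particular'' clause is justified since multiplicativity makes $\alpha$ a self-morphism. The paper omits the proof as routine, and your argument is precisely the intended direct check.
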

\begin{corollary}
 Let $(A,\ast,\alpha)$ be a Hom-Jacobi-Jordan algebra. For any integer $n\in\mathbb{N},$ define $L^n: A\rightarrow gl(A)$ by 
 \begin{eqnarray}
  L^n(x)y=\alpha^n(x)\ast y \mbox{ for all $x,y\in A.$}\nonumber
 \end{eqnarray}
Then $(A, L^n,\alpha)$ is a representation of the Hom-Jaobi-Jordan algebra 
$(A,\ast,\alpha).$
\end{corollary}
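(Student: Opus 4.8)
The plan is to deduce this directly from the two preceding results rather than verifying the representation axioms \eqref{rHJJ1} and \eqref{rHJJ2} by hand. First I would recall that the regular representation $(A,L,\alpha)$, with $L(x)y=x\ast y$, is a representation of $(A,\ast,\alpha)$; this is the first item of the Example following Proposition \ref{PEx0}. This gives the base case $n=0$, since $\alpha^0=\mathrm{Id}_A$ forces $L^0=L$, and it is the object I intend to twist.

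Next I would observe that $\alpha$ is itself a self-morphism of $(A,\ast,\alpha)$. Indeed, multiplicativity is exactly the statement $\alpha\circ\ast=\ast\circ\alpha^{\otimes 2}$, so $\alpha$ is an algebra endomorphism of $(A,\ast)$, and it trivially commutes with the structure map $\alpha$. With this in hand, Proposition \ref{sRHas} applies to the regular representation with the choices $\rho=L$, $\phi=\alpha$ and $\beta=\alpha$. Its ``in particular'' clause then yields that $\mathcal{V}_{\alpha^n}=(A,L_{\alpha^n}:=L\alpha^n,\alpha)$ is a representation of $(A,\ast,\alpha)$ for every $n\in\mathbb{N}$.

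Finally I would identify the twisted map with $L^n$: for all $x,y\in A$ one has $L_{\alpha^n}(x)y=L(\alpha^n(x))y=\alpha^n(x)\ast y=L^n(x)y$, so $L^n=L\alpha^n$ and the claim follows at once. There is essentially no obstacle in this argument; the only point deserving a line of justification is that $\alpha$ qualifies as a self-morphism, which is immediate from multiplicativity, and the bookkeeping identity $L\alpha^n=L^n$. For completeness one could alternatively check \eqref{rHJJ1} and \eqref{rHJJ2} for $\rho=L^n$ directly, using commutativity together with the Hom-Jacobi identity \eqref{JJi} to handle the second axiom, but the route through Proposition \ref{sRHas} is shorter and reuses the work already done.
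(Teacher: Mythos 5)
Your proposal is correct and is essentially the paper's intended argument: the corollary is stated without proof precisely because it is the immediate specialization of Proposition \ref{sRHas} (with $\beta=\alpha$) to the regular representation $(A,L,\alpha)$, together with the identification $L\alpha^{n}=L^{n}$. Your added remarks that $\alpha$ is a self-morphism by multiplicativity and that $L^{0}=L$ are the only points needing mention, and you handle both.
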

Let us prove the following necessary result.  
\begin{proposition}\label{spHJJ}
 Let $(A,\ast,\alpha)$ be a 
 Hom-Jacobi-Jordan algebra. Then $(V, \rho, \phi)$ is a representation of $(A, \ast,\alpha)$ if and only if the direct sum of vector spaces $A\oplus V,$  turns into a 
Hom-Jacobi-Jordan
algebra with the multiplication and the linear map defined by
\begin{eqnarray}
 (x+u)\diamond (y+v):=x\ast y+(\rho(x)v+\rho(y)u)\label{SdpHJJ1}\\
 (\alpha\oplus\phi)(x+u):=\alpha(x)+\phi(u)\label{sdpHJJ2}
\end{eqnarray}
This Hom-Jacobi-Jordan algebra is called the semi-direct product of $A$ with $V$ and is denoted by $A\ltimes V.$
\end{proposition}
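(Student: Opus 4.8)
The plan is to check the three defining axioms of a Hom-Jacobi-Jordan algebra for $(A\oplus V,\diamond,\alpha\oplus\phi)$ one at a time and to show that each is equivalent either to a property already known for $(A,\ast,\alpha)$ or to one of the representation conditions (\ref{rHJJ1})--(\ref{rHJJ2}). Because every reduction will be an equivalence rather than a one-way implication, both directions of the ``if and only if'' are obtained simultaneously. Throughout I write $X=x+u$, $Y=y+v$, $Z=z+w$ with $x,y,z\in A$ and $u,v,w\in V$, and I use repeatedly that the $A$-part of $\diamond$ is just $\ast$ and that the $V$-part $\rho(x)v+\rho(y)u$ is built symmetrically.

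Commutativity of $\diamond$ is immediate, since $(x+u)\diamond(y+v)=x\ast y+\rho(x)v+\rho(y)u$ is symmetric in $X,Y$ precisely because $\ast$ is commutative, so this axiom carries no information. For multiplicativity I would expand both sides of $(\alpha\oplus\phi)(X\diamond Y)=(\alpha\oplus\phi)(X)\diamond(\alpha\oplus\phi)(Y)$. The $A$-components agree automatically from $\alpha(x\ast y)=\alpha(x)\ast\alpha(y)$, while the $V$-components agree iff $\phi\rho(x)v+\phi\rho(y)u=\rho(\alpha(x))\phi(v)+\rho(\alpha(y))\phi(u)$ for all arguments; specialising $u=0$ (resp. $v=0$) shows this is exactly condition (\ref{rHJJ1}), and conversely (\ref{rHJJ1}) gives it back. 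Hence multiplicativity of $(\diamond,\alpha\oplus\phi)$ is equivalent to (\ref{rHJJ1}).

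The substantive step is the Hom-Jacobian $J_{\alpha\oplus\phi}(X,Y,Z)$. Expanding $(X\diamond Y)\diamond(\alpha\oplus\phi)(Z)$ gives $A$-part $(x\ast y)\ast\alpha(z)$ and $V$-part $\rho(x\ast y)\phi(w)+\rho(\alpha(z))\rho(x)v+\rho(\alpha(z))\rho(y)u$. Summing over the cyclic permutation of $(X,Y,Z)$, the $A$-component is precisely $J_{\alpha}(x,y,z)$, which vanishes because $(A,\ast,\alpha)$ is a Hom-Jacobi-Jordan algebra. For the $V$-component I would collect the nine resulting terms according to which of the free variables $u,v,w$ they carry; the coefficient of $w$, for instance, is $\big(\rho(x\ast y)\phi+\rho(\alpha(x))\rho(y)+\rho(\alpha(y))\rho(x)\big)w$, and the coefficients of $u$ and $v$ are the corresponding cyclic rotations of this expression. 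Each of these three coefficients is exactly the combination that (\ref{rHJJ2}) forces to vanish (for the appropriate pair of variables), so the $V$-component vanishes for all $u,v,w$ if and only if (\ref{rHJJ2}) holds.

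The only genuine obstacle is the bookkeeping in this last step: one must expand all three cyclic terms, track the $\rho$--$\phi$ orderings carefully, and group correctly by $u$, $v$, $w$ so that the linear independence of these free vectors yields the three instances of (\ref{rHJJ2}) cleanly. Once that grouping is done, combining the three axiom-by-axiom equivalences shows that $(A\oplus V,\diamond,\alpha\oplus\phi)$ is a Hom-Jacobi-Jordan algebra if and only if both (\ref{rHJJ1}) and (\ref{rHJJ2}) hold, i.e. if and only if $(V,\rho,\phi)$ is a representation of $(A,\ast,\alpha)$.
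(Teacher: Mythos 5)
Your proof is correct and follows essentially the same route as the paper: commutativity is automatic, multiplicativity of $\diamond$ with respect to $\alpha\oplus\phi$ reduces to (\ref{rHJJ1}) together with multiplicativity of $\ast$, and the cyclic Hom-Jacobian splits into the $A$-part $J_{\alpha}(x,y,z)$ plus three $V$-blocks grouped by $u,v,w$, each being an instance of (\ref{rHJJ2}). The only detail worth making explicit in a final write-up is the specialization (e.g.\ $u=v=0$, $w$ arbitrary) that extracts each instance of (\ref{rHJJ2}) from the vanishing of the full sum, which you already indicate.
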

\begin{proof}
It is clear that $\diamond$ is commutative and its multiplicativity with respect to $\alpha\oplus\phi$ is equivalent to Condition (\ref{rHJJ1}) and the multiplicativity of $\ast$ with respect to $\alpha.$ Next, for all $x,y,z\in A,\ u,v,w\in V,$ we have by straightforward computations
\begin{eqnarray}
 &&\circlearrowleft_{(x+u,y+v,z+w)}
 \Big(((x+u)\diamond (y+v))\diamond(\alpha(z)+\phi(w)) \Big)\nonumber\\
 &&=
 \circlearrowleft_{(x+u,y+v,z+w)}
 \Big((x\ast y)\ast\alpha(z)+\rho(x\ast y)\phi(w)+\rho(\alpha(z))\rho(x)v+\rho(\alpha(z))\rho(y)u \Big)\nonumber\\
 &&=\circlearrowleft_{(x,y,z)}(x\ast y)\ast\alpha(z)+\Big(\rho(x\ast y)\phi(w)+\rho(\alpha(x))\rho(y)w+\rho(\alpha(y))\rho(x)w \Big)
 +\Big(\rho(y\ast z)\phi(u)\nonumber\\
 &&+\rho(\alpha(y))\rho(z)u+\rho(\alpha(z))\rho(y)u \Big)
 +\Big(\rho(z\ast x)\phi(v)+\rho(\alpha(z))\rho(x)v+\rho(\alpha(x))\rho(z)v \Big).\nonumber
\end{eqnarray}
Hence, (\ref{JJi}) holds for $A\ltimes V$ if and only if (\ref{rHJJ2}) holds.
\end{proof}
Let $(A, \ast,\alpha)$ be a Hom-Jacobi-Jordan algebra and $(V, \rho,\phi)$ be a representation of  $(A, \ast,\alpha)$ such that $\phi$ is invertible. Define 
$\rho^{\ast}: A\rightarrow gl(V^{\ast})$
by $\langle\rho^{\ast}(x)(\xi), u\rangle=\langle \xi,\rho(x)u\rangle,$ 
$\forall x\in A,\ u\in v, \xi\in V^{\ast}.$
 Note that  in general $\rho^{\ast}$ is not a representation of $A$ (see \cite{sbam} for details). Define 
$\rho^{\ast}: A\rightarrow gl(V^{\ast})$
by
\begin{eqnarray}
 \rho^{\ast}(x)(\xi):=\rho^{\ast}(\alpha(x))((\phi)^{-2})^{\ast}(\xi)) \mbox{ for all $x\in A, \xi\in V^{\ast}.$}
 \label{dualr}
\end{eqnarray}
Now, we can prove 
\begin{theorem}\label{repdual}
 Let $(V,\rho, \phi)$ be a representation of a regular Hom-Jacobi-Jordan algebra 
 $(A, \ast ,\alpha)$ such that $\phi$ is invertible. Then $\rho^{\ast}: A\rightarrow gl(V^{\ast})$
defined above by (\ref{dualr}) is a representation of $(A, \ast ,\alpha)$ on 
$V^{\ast}$ with respect to 
$(\phi^{-1})^{\ast}.$
\end{theorem}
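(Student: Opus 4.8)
The plan is to verify the two representation axioms \eqref{rHJJ1} and \eqref{rHJJ2} for the triple $(V^{\ast},\rho^{\ast},(\phi^{-1})^{\ast})$ by transposing the identities already known to hold for $(V,\rho,\phi)$. Throughout I write $T^{\ast}$ for the transpose of an operator $T\in gl(V)$, so that $\langle T^{\ast}\xi,u\rangle=\langle\xi,Tu\rangle$ and $(ST)^{\ast}=T^{\ast}S^{\ast}$, and I set $\psi:=(\phi^{-1})^{\ast}=(\phi^{\ast})^{-1}$. Since transposition reverses products and commutes with inversion, $(\phi^{-2})^{\ast}=\psi^{2}$, and the definition \eqref{dualr} becomes the operator identity $\rho^{\ast}(x)=\rho(\alpha(x))^{\ast}\circ\psi^{2}$. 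The whole proof is then a bookkeeping exercise in moving factors of $\psi$ past the transposed operators.

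First I would transpose the two defining relations of $(V,\rho,\phi)$. Transposing \eqref{rHJJ1} gives $\rho(x)^{\ast}\phi^{\ast}=\phi^{\ast}\rho(\alpha(x))^{\ast}$; multiplying on both sides by $\psi$ and using $\phi^{\ast}\psi=\psi\phi^{\ast}=\mathrm{Id}$ produces the key intertwining relation $\psi\,\rho(x)^{\ast}=\rho(\alpha(x))^{\ast}\,\psi$, and hence $\psi^{k}\rho(x)^{\ast}=\rho(\alpha^{k}(x))^{\ast}\psi^{k}$ for all $k\ge 0$. Transposing \eqref{rHJJ2} gives $\phi^{\ast}\rho(x\ast y)^{\ast}=-\rho(y)^{\ast}\rho(\alpha(x))^{\ast}-\rho(x)^{\ast}\rho(\alpha(y))^{\ast}$. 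These two transposed identities, together with the multiplicativity $\alpha(x\ast y)=\alpha(x)\ast\alpha(y)$, are the only inputs the computation needs.

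Axiom \eqref{rHJJ1} for the dual reads $\psi\rho^{\ast}(x)=\rho^{\ast}(\alpha(x))\psi$; expanding each side by $\rho^{\ast}(x)=\rho(\alpha(x))^{\ast}\psi^{2}$ and sliding the stray $\psi$ through $\rho(\alpha(x))^{\ast}$ with the intertwining relation, both sides collapse to $\rho(\alpha^{2}(x))^{\ast}\psi^{3}$, so it holds immediately. The substantive step is axiom \eqref{rHJJ2}, namely $\rho^{\ast}(x\ast y)\psi=-\rho^{\ast}(\alpha(x))\rho^{\ast}(y)-\rho^{\ast}(\alpha(y))\rho^{\ast}(x)$. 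On the right, expanding each factor and repeatedly using $\psi^{2}\rho(\alpha(z))^{\ast}=\rho(\alpha^{3}(z))^{\ast}\psi^{2}$ to push all $\psi$'s to the far right yields $-\big[\rho(\alpha^{2}(x))^{\ast}\rho(\alpha^{3}(y))^{\ast}+\rho(\alpha^{2}(y))^{\ast}\rho(\alpha^{3}(x))^{\ast}\big]\psi^{4}$. On the left, multiplicativity gives $\rho^{\ast}(x\ast y)\psi=\rho(\alpha(x)\ast\alpha(y))^{\ast}\psi^{3}$; here I would rewrite $\rho(\alpha(x)\ast\alpha(y))^{\ast}=\psi\,\phi^{\ast}\rho(\alpha(x)\ast\alpha(y))^{\ast}$ and apply the transposed \eqref{rHJJ2} with $x,y$ replaced by $\alpha(x),\alpha(y)$, then move the resulting $\psi$'s rightward with the intertwining relation. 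Both sides reduce to the same expression.

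I expect the only real obstacle to be the reconciliation of the powers of $\psi$: naively the left side of \eqref{rHJJ2} carries $\psi^{3}$ while the right side carries $\psi^{4}$, and the mismatch is removed exactly by using the transposed form of \eqref{rHJJ2} to trade a factor $\phi^{\ast}$ for a factor $\psi$, which is where the invertibility of $\phi$ is essential. Once the intertwining relation and this single application of the transposed cocycle identity are in place, the verification is purely mechanical.
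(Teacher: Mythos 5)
Your proposal is correct and follows essentially the same route as the paper: the paper's proof is exactly this computation, carried out elementwise through the pairing $\langle\cdot,\cdot\rangle$ rather than as operator identities, with the same two inputs (the intertwining relation obtained by transposing (\ref{rHJJ1}) and one application of (\ref{rHJJ2}) at the shifted arguments $\alpha(x),\alpha(y)$, which supplies the extra factor of $(\phi^{-1})^{\ast}$ that reconciles the powers). Your operator-level bookkeeping with $\psi=(\phi^{-1})^{\ast}$ is a clean reformulation of the same argument, not a different one.
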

\begin{proof}
 First, by (\ref{rHJJ1}) and (\ref{dualr}), we get for all $x\in A, \xi\in V^{\ast},$
 \begin{eqnarray}
  \rho^{\ast}(\alpha(x))((\phi^{-1})^{\ast}(\xi))=\rho^{\ast}(\alpha^2(x))((\phi^{-3})^{\ast}(\xi))=
  (\phi^{-1})^{\ast}(\rho^{\ast}(\alpha(x))(\phi^{-2})^{\ast}(\xi))=
  (\phi^{-1})^{\ast}(\rho^{\ast}(x)(\xi))\nonumber
 \end{eqnarray}
which implies $\rho^{\ast}(\alpha(x))\circ(\phi^{-1})^{\ast}=(\phi^{-1})^{\ast}\circ\rho^{\ast}(x).$\\

On the other hand, by (\ref{rHJJ2}), for all $x, y \in A,$ $\xi\in V^{\ast}$ and 
$u\in V,$ we have
\begin{eqnarray}
 &&\langle\rho^{\ast}(x\ast y)((\phi^{-1})^{\ast}(\xi)),u\rangle=\langle
 \rho^{\ast}(\alpha(x)\ast\alpha(y))((\phi^{-3})^{\ast}(\xi)),u\rangle=\langle
 (\phi^{-3})^{\ast}(\xi), \rho(\alpha(x)\ast\alpha(y))(u)\rangle\nonumber\\
 &&= \langle(\phi^{-3})^{\ast}(\xi),  -\rho(\alpha^2(x))\rho(\alpha(y))(\phi^{-1}(u))-\rho(\alpha^2(y))\rho(\alpha(x))(\phi^{-1}(u))\rangle \mbox{ ( by (\ref{rHJJ2}) )}\nonumber\\
 &&= \langle(\phi^{-4})^{\ast}(\xi),  -\rho(\alpha^3(x))\rho(\alpha^2(y))(u)-\rho(\alpha^3(y))\rho(\alpha^2(x))(u)\rangle\nonumber\\
 &&=\langle -\rho^{\ast}(\alpha^2(y))\rho^{\ast}(\alpha^3(x))(\phi^{-4})^{\ast}(\xi)-\rho^{\ast}(\alpha^2(x))\rho^{\ast}(\alpha^3(y))(\phi^{-4})^{\ast}(\xi),u\rangle\nonumber\\
 &&=\langle -\rho^{\ast}(\alpha(y))\rho^{\ast}(x)\xi-\rho^{\ast}(\alpha(x))\rho^{\ast}(y)\xi,u\rangle\nonumber
\end{eqnarray}
Hence, $\rho^{\ast}(x\ast y)(\phi^{-1})^{\ast}=-\rho^{\ast}(\alpha(x))\rho^{\ast}(y)-\rho^{\ast}(\alpha(y))\rho^{\ast}(x).$ Therefore, $\rho^{\ast}$ is a representation of $(A, \ast ,\alpha)$ on $V^{\ast}$ with respect to 
$(\phi^{-1})^{\ast}.$
\end{proof}
\begin{corollary}
 Let $(A, \ast ,\alpha)$ be a regular Hom-Jacobi-Jordan algebra. Then $ad^{\ast}: A\rightarrow gl(A^{\ast})$ defined by 
 \begin{eqnarray}
  ad_x^{\ast}\xi:=ad_{\alpha(x)}^{\ast}(\alpha^{-2})^{\ast}(\xi) \mbox{  for all $x\in A, \xi\in A^{\ast}$ }
 \end{eqnarray}
is a representation of the regular Hom-Jacobi-Jordan algebra $(A, \ast ,\alpha)$ on $A^{\ast}$ with respect to $(\alpha^{-1})^{\ast},$ which is called the
coadjoint representation.
\end{corollary}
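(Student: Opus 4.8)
The plan is to obtain this corollary as an immediate specialization of Theorem \ref{repdual}, applied to the regular representation. Recall from the Example following Proposition \ref{PEx0} that the left multiplication $L:A\rightarrow gl(A)$, $L(x)y:=x\ast y$, makes $(A,L,\alpha)$ a representation of $(A,\ast,\alpha)$. Since $\ast$ is commutative, this regular representation \emph{is} the adjoint representation, so that $ad_x=L(x)$ and hence $ad=L$ as maps $A\rightarrow gl(A)$. This identification is the only piece of bookkeeping the argument requires.

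First I would record that ``regular'' means precisely that the structure map $\alpha$ is bijective. Thus, in the notation of Theorem \ref{repdual}, I may take $(V,\rho,\phi)=(A,L,\alpha)$ with $\phi=\alpha$ invertible, so the hypotheses of that theorem are satisfied. Applying it to this data, the dual map $L^{\ast}$ obtained from the specialization of (\ref{dualr}), namely
\[
L^{\ast}(x)(\xi)=L^{\ast}(\alpha(x))\big((\alpha^{-2})^{\ast}(\xi)\big),
\]
is a representation of $(A,\ast,\alpha)$ on $A^{\ast}$ with respect to $(\phi^{-1})^{\ast}=(\alpha^{-1})^{\ast}$. Substituting the identity $ad=L$ into this formula turns it into exactly $ad_x^{\ast}\xi=ad_{\alpha(x)}^{\ast}(\alpha^{-2})^{\ast}(\xi)$, which is the stated definition. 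Therefore $ad^{\ast}$ is the claimed representation, and the proof is complete.

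There is no genuine obstacle here: the entire analytic content lives in Theorem \ref{repdual}, and what remains is only the recognition that the regular representation agrees with the adjoint one and the substitution $\phi=\alpha$ into (\ref{dualr}). If one wished for a self-contained verification, one could instead unwind the two representation axioms (\ref{rHJJ1})--(\ref{rHJJ2}) for $\rho=ad^{\ast}$ directly, using the commutativity of $\ast$ together with the invertibility of $\alpha$ to pair against an arbitrary $u\in A$; but this simply reproduces, verbatim, the chain of equalities already carried out in the proof of Theorem \ref{repdual}.
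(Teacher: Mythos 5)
Your proposal is correct and is exactly the argument the paper intends: the corollary is stated without proof as the specialization of Theorem \ref{repdual} to the regular (adjoint) representation $(A,L,\alpha)=(A,ad,\alpha)$ with $\phi=\alpha$ invertible by regularity. Nothing further is needed.
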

also, we get:
\begin{corollary}
 Let $(A, \ast ,\alpha)$ be a regular Hom-Jacobi-Jordan algebra. Then there is a natural Hom-Jacobi-Jordan algebra 
 $(A\oplus A^{\ast},\diamond, \alpha\oplus(\alpha^{-1})^{\ast}),$ where the product $\diamond$  is given by 
 \begin{eqnarray}
  &&(x_1+\xi_1)\diamond (x_2+\xi_2):=
  x_1\ast x_2+ad_{x_1}^{\ast}\xi_2+ad_{x_2}^{\ast}\xi_1\nonumber\\
  &&= x_1\ast x_2+ad_{\alpha(x_1)}^{\ast}(\alpha^{-2})^{\ast}(\xi_2)+
  ad_{\alpha(x_2)}^{\ast}(\alpha^{-2})^{\ast}(\xi_1)\mbox{  for all $x_1,x_2\in A, \xi_1,\xi_2\in A^{\ast}.$ }\nonumber
 \end{eqnarray}
\end{corollary}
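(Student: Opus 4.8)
The plan is to recognize this corollary as nothing more than the semi-direct product construction of Proposition~\ref{spHJJ} specialized to the coadjoint representation furnished by the immediately preceding corollary. Concretely, the preceding corollary already establishes that the triple $(A^{\ast}, ad^{\ast}, (\alpha^{-1})^{\ast})$ is a representation of the regular Hom-Jacobi-Jordan algebra $(A,\ast,\alpha)$, where $ad_x^{\ast}\xi = ad_{\alpha(x)}^{\ast}(\alpha^{-2})^{\ast}(\xi)$. So the entire content of the present statement should follow by feeding this representation into the semi-direct product machinery, with no new verification of the Hom-Jacobi identity required.

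First I would set $V := A^{\ast}$, $\rho := ad^{\ast}$, and $\phi := (\alpha^{-1})^{\ast}$ in Proposition~\ref{spHJJ}. Since $\rho$ is invertible-irrelevant here (the representation axioms were already checked in the coadjoint corollary), Proposition~\ref{spHJJ} immediately yields that $A\oplus A^{\ast}$ carries a Hom-Jacobi-Jordan algebra structure with linear map $\alpha\oplus(\alpha^{-1})^{\ast}$ and product given by formula~(\ref{SdpHJJ1}), namely
\begin{eqnarray}
(x_1+\xi_1)\diamond(x_2+\xi_2)=x_1\ast x_2+\big(ad^{\ast}(x_1)\xi_2+ad^{\ast}(x_2)\xi_1\big).\nonumber
\end{eqnarray}
The only thing left is to observe that this coincides term by term with the product displayed in the statement: the component in $A$ is $x_1\ast x_2$, and the two $A^{\ast}$-contributions are exactly $ad_{x_1}^{\ast}\xi_2$ and $ad_{x_2}^{\ast}\xi_1$, which after expanding $ad^{\ast}$ by its definition give the second displayed line $ad_{\alpha(x_1)}^{\ast}(\alpha^{-2})^{\ast}(\xi_2)+ad_{\alpha(x_2)}^{\ast}(\alpha^{-2})^{\ast}(\xi_1)$.

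There is essentially no genuine obstacle in this proof, since the two substantive facts—that $ad^{\ast}$ is a representation with respect to $(\alpha^{-1})^{\ast}$, and that any representation produces a semi-direct product Hom-Jacobi-Jordan algebra—have been proved earlier and can be invoked directly. The only care needed is purely bookkeeping: one must confirm that the twist $\phi=(\alpha^{-1})^{\ast}$ used in the coadjoint corollary is precisely the second summand of the diagonal map $\alpha\oplus(\alpha^{-1})^{\ast}$ appearing in the statement, and that the identification $V=A^{\ast}$ makes the abstract symbol $\rho$ in~(\ref{SdpHJJ1}) read as $ad^{\ast}$. Once these identifications are made explicit, the result is a one-line consequence, so I would write the proof simply as ``apply Proposition~\ref{spHJJ} to the coadjoint representation of the preceding corollary.''
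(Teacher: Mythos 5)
Your proposal is correct and is exactly the argument the paper intends (the corollary is stated without proof precisely because it is the semi-direct product of Proposition \ref{spHJJ} applied to the coadjoint representation $(A^{\ast}, ad^{\ast}, (\alpha^{-1})^{\ast})$ from the preceding corollary). The bookkeeping identifications you flag are the only content, and you handle them correctly.
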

\begin{definition}
A matched pair of Hom-Jacobi-Jordan algebras denoted by $(A_1,A_2,\rho_1,\rho_2),$ consists of two Hom-Jacobi-Jordan algebras $\mathcal{A}_1:=(A_1,\ast,\alpha_1)$ and 
 $\mathcal{A}_2:=(A_2,\bullet,\alpha_2)$ together with representations $\rho_1: A_1\rightarrow gl(A_2)$ and 
 $\rho_2: A_2\rightarrow gl(A_1)$ with respect to $\alpha_2$ and $\alpha_1$ respectively such that for all $x,y\in A_1,$ $a,b\in A_2.$  the following conditions hold
 \begin{eqnarray}
 && \rho_1(\alpha_1(x))(a\bullet b)+(\rho_1(x)a)\bullet\alpha_2(b)+(\rho_1(x)b)\bullet\alpha_2(a)\nonumber\\
 &&+\rho_1(\rho_2(a)x)\alpha_2(b)+\rho_1(\rho_2(b)x)\alpha_2(a)=0\label{mpHJJ1}\\
 && \rho_2(\alpha_2(a))(x\ast y)+(\rho_2(a)x)\ast\alpha_1(y)+(\rho_2(a)y)\ast\alpha_1(x)\nonumber\\
 &&+\rho_2(\rho_1(x)a)\alpha_1(y)+\rho_2(\rho_1(y)a)\alpha_1(x)=0\label{mpHJJ2}
 \end{eqnarray}
\end{definition}
\begin{theorem}
Let $\mathcal{A}_1:=(A_1,\ast,\alpha_1)$ and 
 $\mathcal{A}_2:=(A_2,\bullet,\alpha_2)$ be Hom-Jacobi-Jordan algebras. Then,
 $(A_1,A_2,\rho_1,\rho_2),$ is a matched pair of Hom-Jacobi-Jordan algebras if and only if  $(A_1\oplus A_2, \diamond, \alpha_1\oplus\alpha_2)$ is a Hom-Jacobi-Jordan algebra where
\begin{eqnarray}
 (x+b)\diamond(y+b)&:=&(x\ast y+\rho_2(a)y+\rho_2(b)x)+(a\bullet b+\rho_1(x)b+\rho_1(y)a) \label{opHJJ1}\\
 (\alpha_1\oplus\alpha_2)(x+a)&:=&\alpha_1(x)+\alpha_2(a)\nonumber
\end{eqnarray}
\end{theorem}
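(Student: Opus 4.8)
The plan is to verify that $(A_1 \oplus A_2, \diamond, \alpha_1 \oplus \alpha_2)$ satisfies the three axioms of a Hom-Jacobi-Jordan algebra and to record exactly which hypothesis on $\rho_1,\rho_2$ each axiom forces, so that the equivalence reads off at the end. Commutativity of $\diamond$ is immediate from \eqref{opHJJ1}: $\ast$ and $\bullet$ are commutative and the cross-terms $\rho_2(a)y+\rho_2(b)x$ and $\rho_1(x)b+\rho_1(y)a$ are visibly symmetric under $(x+a)\leftrightarrow(y+b)$. For multiplicativity I would expand $(\alpha_1\oplus\alpha_2)\big((x+a)\diamond(y+b)\big)$ and $\big((\alpha_1\oplus\alpha_2)(x+a)\big)\diamond\big((\alpha_1\oplus\alpha_2)(y+b)\big)$ and compare components: the diagonal $A_1$- and $A_2$-parts agree by multiplicativity of $\ast$ and $\bullet$, while the four mixed parts agree precisely when $\alpha_2\rho_1(x)=\rho_1(\alpha_1(x))\alpha_2$ and $\alpha_1\rho_2(a)=\rho_2(\alpha_2(a))\alpha_1$ hold, that is, exactly \eqref{rHJJ1} for $\rho_1$ (with respect to $\alpha_2$) and for $\rho_2$ (with respect to $\alpha_1$).

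The core is the Hom-Jacobi identity. I would compute $J_{\alpha_1\oplus\alpha_2}(x+a,y+b,z+c)=\circlearrowleft_{(x+a,y+b,z+c)}\big((x+a)\diamond(y+b)\big)\diamond(\alpha_1(z)+\alpha_2(c))$ by substituting \eqref{opHJJ1} twice and projecting onto $A_1$ and $A_2$. The decisive point is that each projection splits, according to how many factors $\rho_2$ (respectively $\rho_1$) occur, into pieces of different multidegree in $a,b,c$ (respectively $x,y,z$); since the field is infinite, rescaling $(a,b,c)\mapsto(ta,tb,tc)$ shows these homogeneous pieces vanish independently, so I may treat them one at a time. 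Explicitly, the $A_1$-projection breaks into a part with no $\rho_2$, equal to $J_{\alpha_1}(x,y,z)$; a part linear in one of $a,b,c$; and a part quadratic in $a,b,c$ (the two nested $\rho_2$'s together with the $\rho_2(a\bullet b)$ terms). The $A_2$-projection breaks the same way with the two algebras interchanged.

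The main effort is the bookkeeping of collecting the twelve surviving monomials in each projection into the correct bucket across the three cyclic substitutions. I expect the linear-in-$a$ part of the $A_1$-projection to assemble exactly into the left-hand side of \eqref{mpHJJ2} evaluated at $(y,z;a)$, with the linear-in-$b$ and linear-in-$c$ parts giving its cyclic images; and the quadratic part, once sorted by which of $x,y,z$ it is applied to, to group into expressions $\rho_2(a\bullet b)\alpha_1(\,\cdot\,)+\rho_2(\alpha_2(a))\rho_2(b)(\,\cdot\,)+\rho_2(\alpha_2(b))\rho_2(a)(\,\cdot\,)$, which is precisely \eqref{rHJJ2} for $\rho_2$. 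Dually, the $A_2$-projection yields $J_{\alpha_2}(a,b,c)$, the cyclic sum of \eqref{mpHJJ1}, and \eqref{rHJJ2} for $\rho_1$. The hard part is purely organizational: keeping the three cyclic relabelings straight so that each monomial lands where claimed.

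Assembling these observations gives the equivalence. In the direction assuming a matched pair, the hypotheses provide that $\rho_1,\rho_2$ are representations (so \eqref{rHJJ1} gives multiplicativity and \eqref{rHJJ2} kills both quadratic buckets), together with \eqref{mpHJJ1} and \eqref{mpHJJ2} (killing the linear buckets), while $J_{\alpha_1}=J_{\alpha_2}=0$ kill the constant buckets; hence $J_{\alpha_1\oplus\alpha_2}=0$ and $A_1\oplus A_2$ is Hom-Jacobi-Jordan. Conversely, assuming $A_1\oplus A_2$ is a Hom-Jacobi-Jordan algebra, multiplicativity forces \eqref{rHJJ1}, and the multidegree separation of $J_{\alpha_1\oplus\alpha_2}=0$ forces each homogeneous bucket to vanish: the quadratic buckets give \eqref{rHJJ2} for $\rho_1$ and $\rho_2$ (so both are representations), and the linear buckets give \eqref{mpHJJ1} and \eqref{mpHJJ2}, isolated by setting the two spare $A_1$- respectively $A_2$-variables to zero. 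Thus $(A_1,A_2,\rho_1,\rho_2)$ is a matched pair.
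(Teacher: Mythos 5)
Your proposal is correct and follows the same basic route as the paper: both proofs reduce commutativity and multiplicativity of $\diamond$ to the corresponding properties of $\ast$, $\bullet$ and condition (\ref{rHJJ1}), and then expand the Hom-Jacobian $J_{\alpha_1\oplus\alpha_2}$ of the trilinear product on $A_1\oplus A_2$ and sort the resulting monomials into the buckets $J_{\alpha_1}$, $J_{\alpha_2}$, (\ref{rHJJ2}) for $\rho_1$ and $\rho_2$, and the cyclic images of (\ref{mpHJJ1}) and (\ref{mpHJJ2}). The one genuine difference is your multidegree separation (rescaling $(a,b,c)\mapsto(ta,tb,tc)$, or equivalently specializing spare variables to zero): the paper instead assumes from the outset that $\rho_1,\rho_2$ are representations, invokes (\ref{JJi}) and (\ref{rHJJ2}) to cancel the constant and quadratic buckets, and then reads off the equivalence with (\ref{mpHJJ1})--(\ref{mpHJJ2}) from what remains. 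Your version buys a cleaner converse: since the theorem's hypotheses do not a priori grant that $\rho_1,\rho_2$ satisfy (\ref{rHJJ2}), deriving the representation conditions from the vanishing of the quadratic buckets (and the linear buckets independently giving the compatibility conditions) closes a step the paper leaves implicit, at the cost of one extra observation that each bucket is homogeneous of a distinct multidegree, which is legitimate over a field of characteristic $0$.
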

\begin{proof} Fist, the commutativity of $\diamond$ and its multiplicativity with respect to $\alpha_1\oplus\alpha_2$ are equivalent to the multiplicativity of $\ast$ and $\bullet$ with respect to $\alpha_1$ and $\alpha_2$ respectively and Condition (\ref{rHJJ1}). Next, for all $x,y,z\in A_1$ and $a,b,c\in A_2,$ we compute
\begin{eqnarray}
 &&\circlearrowleft_{(x+a,y+b,z+x)}\Big((x+b)\diamond(y+b) \Big)\diamond(\alpha_1\oplus\alpha_2)(z+c)\nonumber\\
 &&=
 \circlearrowleft_{(x+a,y+b,z+x)}\Big( (x\ast y)\ast\alpha(z)+(\rho_2(a)y)\ast\alpha_1(z)+(\rho_2(b)x)\ast\alpha_1(z)+\rho_2(a\bullet b)\alpha_1(z)\nonumber\\
 &&+\rho_2(\rho_1(x)b)\alpha_1(z)+\rho_2(\rho_1(y)a)\alpha_1(z)+\rho_2(\alpha_2(c))(x\ast y)+\rho_2(\alpha_2(c))\rho_2(a)y+\rho_2(\alpha_2(c))\rho_2(b)x\Big)\nonumber\\
 &&+\circlearrowleft_{(x+a,y+b,z+x)}\Big( (a\bullet b)\bullet\alpha(c)+(\rho_1(x)b)\bullet\alpha_2(c)+(\rho_1(y)a)\bullet\alpha_2(c)+\rho_1(x\ast y)\alpha_2(c)\nonumber\\
 &&+\rho_1(\rho_2(a)y)\alpha_2(c)+\rho_1(\rho_2(b)x)\alpha_2(c)+\rho_1(\alpha_1(z))(a\bullet b)+\rho_1(\alpha_1(z))\rho_1(x)b+\rho_1(\alpha_1(z))\rho_1(y)a\Big)\nonumber\\
 &&=\Big( \circlearrowleft_{(x,y,z)}(x\ast y)\ast\alpha_1(z)\Big)+\Big( \rho_2(a\bullet b)\alpha_1(z)+\rho_2(\alpha_2(a))\rho_2(b)z+\rho_2(\alpha_2(b))\rho_2(a)z\Big)\nonumber\\
 &&+\Big( \rho_2(b\bullet c)\alpha_1(x)+\rho_2(\alpha_2(b))\rho_2(c)x+\rho_2(\alpha_2(c))\rho_2(b)x\Big)+
 \Big( \rho_2(c\bullet a)\alpha_1(y)+\rho_2(\alpha_2(c))\rho_2(a)y\nonumber\\
 &&+\rho_2(\alpha_2(a))\rho_2(c)y\Big)+\Big( \rho_2(\alpha_2(c))(x\ast y)+(\rho_2(c)x)\ast\alpha_1(y)+(\rho_2(c)y)\ast\alpha_1(x) +\rho_2(\rho_1(x)c)\alpha_1(y)\nonumber\\
 &&+\rho_2(\rho_1(y)c)\alpha_1(x)\Big)
 +\Big( \rho_2(\alpha_2(b))(z\ast x)+(\rho_2(b)z)\ast\alpha_1(x)+(\rho_2(b)x)\ast\alpha_1(z) +\rho_2(\rho_1(z)b)\alpha_1(x)\nonumber\\
 &&+\rho_2(\rho_1(x)b)\alpha_1(z)\Big)
+\Big( \rho_2(\alpha_2(a))(y\ast z)+(\rho_2(a)y)\ast\alpha_1(z)+(\rho_2(a)z)\ast\alpha_1(y) +\rho_2(\rho_1(y)a)\alpha_1(z)\nonumber\\
 &&+\rho_2(\rho_1(z)a)\alpha_1(y)\Big)+\Big( \circlearrowleft_{(a,b,c)}(a\bullet b)\bullet\alpha_2(c)\Big)+\Big( \rho_1(x\ast y)\alpha_2(c)+\rho_1(\alpha_1(x))\rho_1(y)c+\rho_1(\alpha_1(y))\rho_1(x)c\Big)\nonumber
\end{eqnarray}
\begin{eqnarray}
 &&+\Big( \rho_1(y\ast z)\alpha_2(a)+\rho_1(\alpha_1(y))\rho_1(z)a+\rho_1(\alpha_1(z))\rho_1(y)a\Big)+
 \Big( \rho_1(z\ast x)\alpha_2(b)+\rho_1(\alpha_1(z))\rho_1(x)b\nonumber\\
 &&+\rho_1(\alpha_1(x))\rho_1(z)b\Big)+\Big( \rho_1(\alpha_1(z))(a\bullet b)+(\rho_1(z)a)\bullet\alpha_2(b)+(\rho_1(z)b)\bullet\alpha_2(a) +\rho_1(\rho_2(a)z)\alpha_2(b)\nonumber\\
 &&+\rho_1(\rho_2(b)z)\alpha_2(a)\Big)
 +\Big( \rho_1(\alpha_1(y))(c\bullet a)+(\rho_1(y)c)\bullet\alpha_2(a)+(\rho_1(y)a)\bullet\alpha_2(c) +\rho_1(\rho_2(c)y)\alpha_2(a)\nonumber\\
 &&+\rho_1(\rho_2(a)y)\alpha_2(c)\Big)
+\Big( \rho_1(\alpha_1(x))(b\bullet c)+(\rho_1(x)b)\bullet\alpha_2(c)+(\rho_1(x)c)\bullet\alpha_2(b) +\rho_1(\rho_2(b)x)\alpha_2(c)\nonumber\\
 &&+\rho_1(\rho_2(c)x)\alpha_2(b)\Big).
\end{eqnarray}
Therefore, by (\ref{JJi}) and (\ref{rHJJ2}), we have
\begin{eqnarray}
 &&\circlearrowleft_{(x+a,y+b,z+x)}\Big((x+b)\diamond(y+b) \Big)\diamond(\alpha_1\oplus\alpha_2)(z+c)\nonumber\\
 &&=\Big( \rho_2(\alpha_2(c))(x\ast y)+(\rho_2(c)x)\ast\alpha_1(y)+(\rho_2(c)y)\ast\alpha_1(x) +\rho_2(\rho_1(x)c)\alpha_1(y)+\rho_2(\rho_1(y)c)\alpha_1(x)\Big)\nonumber\\
 &&+\Big( \rho_2(\alpha_2(b))(z\ast x)+(\rho_2(b)z)\ast\alpha_1(x)+(\rho_2(b)x)\ast\alpha_1(z) +\rho_2(\rho_1(z)b)\alpha_1(x)
 +\rho_2(\rho_1(x)b)\alpha_1(z)\Big)\nonumber\\
&&+\Big( \rho_2(\alpha_2(a))(y\ast z)+(\rho_2(a)y)\ast\alpha_1(z)+(\rho_2(a)z)\ast\alpha_1(y) +\rho_2(\rho_1(y)a)\alpha_1(z)
 +\rho_2(\rho_1(z)a)\alpha_1(y)\Big)\nonumber\\
 &&+\Big( \rho_1(\alpha_1(z))(a\bullet b)+(\rho_1(z)a)\bullet\alpha_2(b)+(\rho_1(z)b)\bullet\alpha_2(a) +\rho_1(\rho_2(a)z)\alpha_2(b)
 +\rho_1(\rho_2(b)z)\alpha_2(a)\Big)\nonumber\\
 &&+\Big( \rho_1(\alpha_1(y))(c\bullet a)+(\rho_1(y)c)\bullet\alpha_2(a)+(\rho_1(y)a)\bullet\alpha_2(c) +\rho_1(\rho_2(c)y)\alpha_2(a)
 +\rho_1(\rho_2(a)y)\alpha_2(c)\Big)\nonumber\\
&&+\Big( \rho_1(\alpha_1(x))(b\bullet c)+(\rho_1(x)b)\bullet\alpha_2(c)+(\rho_1(x)c)\bullet\alpha_2(b) +\rho_1(\rho_2(b)x)\alpha_2(c)+\rho_1(\rho_2(c)x)\alpha_2(b)\Big).\nonumber
\end{eqnarray}

Hence, (\ref{JJi}) is satisfied in $A_1\oplus A_2$ if and only if (\ref{mpHJJ1}) and (\ref{mpHJJ2}) hold.
 
\end{proof}
\begin{definition}
 Let $(V,\rho,\phi )$ be a representation of a Hom-Jacobi-Jordan algebra $(A,\ast,\alpha).$ A linear
operator $T : V\rightarrow A$ is called an $\mathcal{O}$-operator of $A$ associated to $\rho$ if it satisfies 
\begin{eqnarray}
 && T\phi=\alpha T \label{rbHJJ1}\\
  && T(u)\ast T(v)=T\Big(\rho(T(u))v+\rho(T(v))u\Big) \mbox{ for all $u,v\in V$} \label{rbHJJ2}
\end{eqnarray}
\end{definition}
Observe  that Rota-Baxter operators on Hom-Jacobi-Jordan algebras are $\mathcal{O}$-operators with respect to the regular representation.
\begin{example}
 Let $(A, \ast, \alpha)$ be a Hom-Jacobi-Jordan algebra and $(V,\rho, \phi)$ be a representation of $(A, \ast, \alpha).$
It is easy to verify that $A\oplus V$ is a representation of $(A, \ast, \alpha)$ under the maps $\rho_{A\oplus V}: A\rightarrow gl(A\oplus V)$ defined by
\begin{eqnarray}
 &&\rho_{A\oplus V}(a)(b+v):=a\ast b+\rho(a)v.\nonumber
\end{eqnarray}
Define the linear map $T: A\oplus V\rightarrow A, a+v\mapsto a.$ Then $T$ is an $\mathcal{O}$-operator on $A$ with
respect to the representation $(A\oplus V,\rho_{A\oplus V},\alpha\oplus\phi).$
\end{example}
Let give another example of $\mathcal{O}$-operators of Hom-Jacobi-Jordan algebras.
As Hom-associative algebras case \cite{tcsmam}, let give some characterizations of $\mathcal{O}$-operators on Hom-Jacobi-Jordan algebras.
\begin{proposition}
 A linear map $T : V\rightarrow A$ is an $\mathcal{O}$-operator associated to a representation $(V,\rho,\phi)$ of a Hom-Jacobi-Jordan algebra $(A, \ast, \alpha)$ if and only if the graph of $T,$
$$G_r(T):=\{(T(v), v), v\in  V\}$$
is a subalgebra of the semi-direct product algebra $A\ltimes V.$
\end{proposition}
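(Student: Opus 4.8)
The plan is to unwind what it means for the graph $G_r(T)$ to be a subalgebra of the Hom-algebra $A\ltimes V$ and to match the two resulting conditions with the two defining identities (\ref{rbHJJ1}) and (\ref{rbHJJ2}) of an $\mathcal{O}$-operator. Since $T$ is linear, $G_r(T)$ is automatically a linear subspace of $A\oplus V$, so nothing needs to be checked there. For a sub-Hom-algebra we must verify two things: that $G_r(T)$ is closed under the product $\diamond$, and that it is stable under the structure map $\alpha\oplus\phi$. I would treat these two requirements separately, each yielding exactly one of the $\mathcal{O}$-operator axioms.

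First I would take two arbitrary generators $(T(u),u)$ and $(T(v),v)$ of $G_r(T)$ and compute their product using (\ref{SdpHJJ1}):
\begin{eqnarray}
(T(u)+u)\diamond(T(v)+v)=T(u)\ast T(v)+\big(\rho(T(u))v+\rho(T(v))u\big).\nonumber
\end{eqnarray}
An element $x+w$ of $A\oplus V$ lies in $G_r(T)$ precisely when $x=T(w)$; applying this criterion with $w=\rho(T(u))v+\rho(T(v))u$ shows that the product above belongs to $G_r(T)$ if and only if
$$T(u)\ast T(v)=T\big(\rho(T(u))v+\rho(T(v))u\big),$$
which is exactly (\ref{rbHJJ2}). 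Thus closure of the graph under $\diamond$ is equivalent to the second $\mathcal{O}$-operator identity.

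Next I would apply the structure map to a generator, using (\ref{sdpHJJ2}):
\begin{eqnarray}
(\alpha\oplus\phi)(T(v)+v)=\alpha(T(v))+\phi(v).\nonumber
\end{eqnarray}
By the same membership criterion, this lies in $G_r(T)$ if and only if $\alpha(T(v))=T(\phi(v))$ for all $v\in V$, i.e. $\alpha T=T\phi$, which is (\ref{rbHJJ1}). Combining the two equivalences, $G_r(T)$ is a subalgebra of $A\ltimes V$ exactly when both (\ref{rbHJJ1}) and (\ref{rbHJJ2}) hold, that is, exactly when $T$ is an $\mathcal{O}$-operator; both directions of the desired biconditional follow at once.

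The computation itself is entirely routine; the only point that requires care is the bookkeeping of the membership criterion for $G_r(T)$ --- namely that a pair lies in the graph iff its $A$-component is $T$ of its $V$-component --- and making sure that the Hom-subalgebra condition is taken to include stability under $\alpha\oplus\phi$, not merely closure under $\diamond$. That second requirement is precisely what produces (\ref{rbHJJ1}), and it is the easiest part of the argument to overlook.
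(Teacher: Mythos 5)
Your proof is correct, and in fact the paper states this proposition without supplying any proof, so your argument is exactly the standard one that fills that gap: closure of $G_r(T)$ under $\diamond$ is equivalent to (\ref{rbHJJ2}), and stability under $\alpha\oplus\phi$ is equivalent to (\ref{rbHJJ1}). You are also right to flag that the Hom-subalgebra condition must include stability under the twist map $\alpha\oplus\phi$, since that is the only source of the compatibility condition $T\phi=\alpha T$.
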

The following result shows that an $\mathcal{O}$-operator can be lifted up the
Rota-Baxter operator.
\begin{proposition}
 Let $(A, \ast, \alpha)$ be a Hom-Jacobi-Jordan algebra, $(V,\rho,\phi)$ be a representation of $A$ and
$T : V\rightarrow A$ be a linear map. Define 
$\widehat{T}\in End(A\oplus V)$ by 
$\widehat{T}(a+v):=Tv.$ Then T is an $\mathcal{O}$-operator associated to $(V,\rho,\phi)$
if and only if $\widehat{T}$ is a Rota-Baxter operator on $A\oplus V.$
\end{proposition}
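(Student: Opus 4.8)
The plan is to realize $\widehat{T}$ as a (weight-zero) Rota-Baxter operator on the semi-direct product Hom-Jacobi-Jordan algebra $A\ltimes V=(A\oplus V,\diamond,\alpha\oplus\phi)$ furnished by Proposition \ref{spHJJ}, and then to read off the two defining conditions of such an operator on $A\oplus V$ (compatibility with $\alpha\oplus\phi$, together with the weight-zero Rota-Baxter relation (\ref{Rota-Baxt})) as precisely the two defining conditions (\ref{rbHJJ1}) and (\ref{rbHJJ2}) of an $\mathcal{O}$-operator. The crucial structural observation is that, by definition, $\widehat{T}$ depends only on the $V$-component of its argument and always outputs an element of $A$ (with zero $V$-component); this is exactly what makes the two notions match. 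So first I would fix $X=x+u$ and $Y=y+v$ in $A\oplus V$ and substitute into the Rota-Baxter identity $\widehat{T}(X)\diamond\widehat{T}(Y)=\widehat{T}\big(\widehat{T}(X)\diamond Y+X\diamond\widehat{T}(Y)\big)$.

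For the left-hand side, since $\widehat{T}(X)=T(u)$ and $\widehat{T}(Y)=T(v)$ both lie in $A$, formula (\ref{SdpHJJ1}) gives $\widehat{T}(X)\diamond\widehat{T}(Y)=T(u)\ast T(v)$, an element of $A$. For the right-hand side I would compute, again from (\ref{SdpHJJ1}), that $\widehat{T}(X)\diamond Y=T(u)\ast y+\rho(T(u))v$ and $X\diamond\widehat{T}(Y)=x\ast T(v)+\rho(T(v))u$; adding these and applying $\widehat{T}$, which retains only the $V$-component, annihilates the $A$-parts $T(u)\ast y+x\ast T(v)$ and leaves $\widehat{T}\big(\rho(T(u))v+\rho(T(v))u\big)=T\big(\rho(T(u))v+\rho(T(v))u\big)$. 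Thus the Rota-Baxter identity for $\widehat{T}$ collapses to $T(u)\ast T(v)=T\big(\rho(T(u))v+\rho(T(v))u\big)$, which is precisely (\ref{rbHJJ2}); note that $x$ and $y$ drop out entirely, so no spurious constraint is introduced by the $A$-directions.

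It then remains to match the compatibility with the twisting maps. I would compute $\widehat{T}(\alpha\oplus\phi)(a+v)=\widehat{T}(\alpha(a)+\phi(v))=T\phi(v)$ whereas $(\alpha\oplus\phi)\widehat{T}(a+v)=(\alpha\oplus\phi)(T(v))=\alpha(T(v))$, using (\ref{sdpHJJ2}) and the fact that $T(v)\in A$ has zero $V$-component. Hence $\widehat{T}$ commutes with $\alpha\oplus\phi$ if and only if $T\phi=\alpha T$, which is exactly (\ref{rbHJJ1}). Combining the two equivalences yields the claim in both directions. The only genuine point requiring care, and the main potential obstacle, is the bookkeeping of the $A$- versus $V$-components throughout: one must verify that the $A$-components generated inside the argument of $\widehat{T}$ are indeed killed, and that the surviving identity depends on $u,v$ alone, so that the weight-zero Rota-Baxter relation on $A\oplus V$ is equivalent to (\ref{rbHJJ2}) with no residual conditions arising from the $A$-part of $X$ and $Y$.
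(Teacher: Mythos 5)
Your verification is correct, and it is the natural direct computation: the paper actually states this proposition without any proof, so your argument supplies exactly the omitted check. Both component computations are right — $\widehat{T}(X)\diamond\widehat{T}(Y)=T(u)\ast T(v)$, the $A$-parts $T(u)\ast y+x\ast T(v)$ are killed by $\widehat{T}$, and the surviving identity is (\ref{rbHJJ2}) with no residual dependence on $x,y$. One small point worth making explicit: your treatment of the twisting maps tacitly assumes that a Rota-Baxter operator on a Hom-algebra is required to commute with the structure map, whereas the paper's Definition of a Rota-Baxter operator only imposes the identity (\ref{Rota-Baxt}); under that literal reading the ``if'' direction would not recover (\ref{rbHJJ1}), so the stated equivalence really does need the commutation requirement $\widehat{T}\circ(\alpha\oplus\phi)=(\alpha\oplus\phi)\circ\widehat{T}$ to be part of the definition, exactly as you use it. With that convention made explicit, your proof is complete.
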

In order to give another characterization of $\mathcal{O}$-operators, let introduce the following:
\begin{definition}
 Let $(A, \ast, \alpha)$ be a Hom-Jacobi-Jordan algebra. A linear map 
 $N : A\rightarrow A$ is
said to be a Nijenhuis operator if $N\alpha=\alpha N$ and its Nijenhuis torsions vanish, i.e.,
\begin{eqnarray}
 && N(x)\ast N(y)=N(N(x)\ast y + x\ast N(y)-N(x\cdot y)), \mbox{ for all $x, y\in A,$}\nonumber
\end{eqnarray}
Observe that the deformed multiplications
$\ast_N: A\oplus A\rightarrow A$ given by
\begin{eqnarray}
 && x\ast_N y:= N(x)\ast y+x\ast N(y)-N(x\ast y),\nonumber
\end{eqnarray}
gives rise to a new Hom-Jacobi-Jordan multiplication on $A,$ and $N$ becomes a 
morphism from the Hom-Jacobi-Jordan algebra $(A,\ast_N,\alpha)$ to the initial Hom-Jacobi-Jordan algebra $(A, \ast, \alpha).$
\end{definition}
Now, we can esealy check the following result.
\begin{proposition}
 Let $\mathcal{A}:=(A, \ast, \alpha)$ be a Hom-Jacobi-Jordan algebra and $\mathcal{V}:=(V,\rho, \phi)$ be a representation of $(A, \ast, \alpha).$
 A linear map $T: V\rightarrow A$ is an
  $\mathcal{O}$-operator associated to
$\mathcal{V}$ if and only if $N_T:=\left(
\begin{array}{cc}
 0& T\\
 0& 0
\end{array}
\right)
: A\oplus V \rightarrow A\oplus V$ is a Nijenhuis operator on
the Hom-Jordan algebra $A\oplus V.$
\end{proposition}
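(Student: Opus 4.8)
The plan is to work inside the semi-direct product $A\ltimes V$ furnished by Proposition \ref{spHJJ}, whose multiplication $\diamond$ and twisting map $\alpha\oplus\phi$ are given by (\ref{SdpHJJ1}) and (\ref{sdpHJJ2}); since this is a Hom-Jacobi-Jordan algebra it is in particular a Hom-Jordan algebra, so the Nijenhuis condition is meaningful there. The observation that drives the whole computation is that $N_T(x+u)=T(u)$ lands in $A$ and annihilates the $A$-summand, so that $N_T^2=0$ and $N_T$ acts only on the $V$-component, sending it into $A$ via $T$. First I would treat compatibility with the twist: computing $N_T(\alpha\oplus\phi)(x+u)=T(\phi(u))$ and $(\alpha\oplus\phi)N_T(x+u)=\alpha(T(u))$ shows that $N_T(\alpha\oplus\phi)=(\alpha\oplus\phi)N_T$ holds if and only if $T\phi=\alpha T$, i.e.\ exactly condition (\ref{rbHJJ1}).

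Next I would expand the vanishing of the Nijenhuis torsion of $N_T$ on arbitrary elements $X=x+u$ and $Y=y+v$. Writing $N_T(X)=T(u)$ and $N_T(Y)=T(v)$ and using (\ref{SdpHJJ1}) together with the fact that $T(u),T(v)$ have zero $V$-component, the left-hand side $N_T(X)\diamond N_T(Y)$ collapses to $T(u)\ast T(v)$, which lies in $A$. For the right-hand side I would compute the three terms $N_T(X)\diamond Y$, $X\diamond N_T(Y)$ and $N_T(X\diamond Y)$ separately, tracking $A$- and $V$-components; the only contribution surviving the outer $N_T$ comes from the $V$-components $\rho(T(u))v+\rho(T(v))u$, so the right-hand side equals $T\big(\rho(T(u))v+\rho(T(v))u\big)$.

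Comparing the two sides, the Nijenhuis torsion vanishes precisely when $T(u)\ast T(v)=T\big(\rho(T(u))v+\rho(T(v))u\big)$ for all $u,v\in V$, which is exactly the $\mathcal{O}$-operator identity (\ref{rbHJJ2}); the $V$-components contribute nothing because $N_T$ annihilates $A$. Combined with the first paragraph, this yields both implications simultaneously. I do not expect a genuine obstacle: the argument is a bookkeeping computation made short by $N_T^2=0$. The one point requiring care is to keep the $A$- and $V$-components consistent through the semi-direct product $\diamond$ and to remember that $N_T(X\diamond Y)$ is purely in $A$, so it drops out once the outer $N_T$ is applied---ensuring that no spurious term survives on the right-hand side.
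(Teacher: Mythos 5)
Your proof is correct, and it is exactly the computation the paper intends: the paper states this result with ``we can easily check'' and omits the proof entirely, and your bookkeeping in the semi-direct product $A\ltimes V$ (the twist-compatibility of $N_T$ reducing to $T\phi=\alpha T$, and the torsion identity reducing to $T(u)\ast T(v)=T(\rho(T(u))v+\rho(T(v))u)$ because $N_T$ kills the $A$-component) is the natural way to fill it in.
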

\section{Hom-pre-Jacobi-Jordan algebras}
In this section, we generalize the notion of left (resp. right) pre-Jacobi-Jordan algebra first introduced in \cite{absb} as left-skew-symmetric (resp. right-skew-symmetric ) algebras  to the Hom
case and study the relationships with Hom-Jacobi-Jordan algebras in terms of $\mathcal{O}$-operators of Hom-Jacobi-Jordan algebras.
\subsection{Definition and basic properties}
Fist let introduce the following:
\begin{definition}
Let $(A,\cdot,\alpha)$ be a Hom-algebra. The anti-Hom-associator of $(A,\cdot,\alpha)$ is the map defined by
\begin{eqnarray}
 as_{\alpha}^t(x,y,z):=(x\cdot y)\cdot\alpha(z)+\alpha(x)\cdot(y\cdot z) 
 \mbox{ for all $x,y,z\in A.$} \label{antiHas}
\end{eqnarray}
A multiplicative Hom-algebra $(A,\cdot,\alpha)$ is said to be 
 an anti-Hom-associative algebra is $as_{\alpha}^t(x,y,z)=0$  for all $x,y,z\in A,$ 
\end{definition}
Clearly, any Hom-associative algebra is an anti-Hom-associative algebra.\\
Now, we give the definition of the main object of this subsection.
\begin{definition}
 A left Hom-pre-Jacobi-Jordan  algebra is a multiplicative Hom-algebra $(A,\cdot,\alpha)$ satisfying
 \begin{eqnarray}
  && as_{\alpha}^t(x,y,z)=-as_{\alpha}^t(y,x,z) \mbox{ for all $x,y,z\in A.$} \label{HpJJi}
 \end{eqnarray}
 i.e., the anti-Hom-associator is left skew-symmetric. Actually, (\ref{HpJJi}) is Equivariant to
 \begin{eqnarray}
  (x\star y)\cdot\alpha(z)=-\alpha(x)\cdot(y\cdot z)-\alpha(y)\cdot(x\cdot z)
  \mbox{ for all $x,y,z\in A.$} \label{HpJJi2}
 \end{eqnarray}
where $x\star y=x\cdot y=y\cdot x$ for all 
$x,y\in A.$
\end{definition}
If the anti-Hom-associator is right skew-symmetric i.e., 
\begin{eqnarray}
 as_{\alpha}^t(x,y,z)=-as_{\alpha}^t(x,z,y)\label{HpJJir}
\end{eqnarray}
or equivalently
\begin{eqnarray}
 \alpha(x)\cdot(y\star z)=-(x\cdot y)\cdot\alpha(z)-(x\cdot z)\cdot\alpha(y) \mbox{ for all $x,y,z\in A,$}\nonumber
\end{eqnarray}
then, the multiplicative Hom-algebra is said to be a right Hom-pre-Jacobi-Jordan algebra.

If $\alpha=Id$( identity map) in (\ref{HpJJi} (resp.  (\ref{HpJJir})), we obtain the identity defining the so-called left (resp. right) pre-Jacobi-Jordan algebra. Hence, any pre-Jacobi-Jordan algebra is a pre-Hom-Jacobi-Jordan algebra with  $Id$ as twisting map.
\begin{remark}
 (i) Any anti-associative algebra is a left and right Hom-pre-Jacobi-Jordan algebra.\\
 (ii) Observe that if $(A, \cdot,\alpha)$ is a left Hom-pre-Jacobi-Jordan algebra,
then, the Hom-algebra defined on the same vector space A with "opposite" 
multiplication $x\perp y:=y\cdot x$  is a right Hom-pre-Jacobi-Jordan algebra and vice-versa. Hence, all the statements for left Hom-pre-Jacobi-Jordan algebras have their corresponding statements
for left Hom-pre-Jacobi-Jordan algebras. Thus, we will only consider the left Hom-pre-Jacobi-Jordan algebra case in this paper that we often call Hom-pre-Jacobi-Jordan algebra for short.
\end{remark}
It is easy to prove the following.
\begin{proposition}
 Let $\mathcal{A}:=(A,\mu,\alpha)$ be a Hom-pre-Jacobi-Jordan algebra and $\beta$ be a morphism of $\mathcal{A}.$ Then, $\mathcal{A}_{\beta^n}:=(A,\mu_{\beta^n}:=\beta^n\mu,\beta^n\alpha)$ 
 is a Hom-pre-Jacobi-Jordan algebra for each $n\in\mathbb{N}.$ In particular, $\mathcal{A}_{\alpha^n}:=(A,\mu_{\alpha^n}:=\alpha^n\mu,\alpha^{n+1})$ 
is a Hom-pre-Jacobi-Jordan algebra for each $n\in\mathbb{N}.$
 \end{proposition}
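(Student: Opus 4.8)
The plan is to verify the two conditions in the definition of a Hom-pre-Jacobi-Jordan algebra for the twisted triple $\mathcal{A}_{\beta^n}=(A,\mu_{\beta^n},\beta^n\alpha)$: first the multiplicativity of the new twisting map $\beta^n\alpha$ with respect to the new product $\mu_{\beta^n}=\beta^n\mu$, and then the left skew-symmetry (\ref{HpJJi}) of the associated anti-Hom-associator. Throughout I would use the two defining properties of a self-morphism $\beta$ of $\mathcal{A}$, namely $\beta\alpha=\alpha\beta$ and $\beta(x\cdot y)=\beta(x)\cdot\beta(y)$, together with the multiplicativity $\alpha(x\cdot y)=\alpha(x)\cdot\alpha(y)$ of the original algebra. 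These guarantee that $\alpha$ and $\beta$ commute with one another and that any power $\beta^k$ can be pulled in and out of products freely, which is the only algebraic input the argument needs.

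Writing $x\cdot_{\beta^n}y:=\beta^n(x\cdot y)$ for the new product and $\alpha_n:=\beta^n\alpha$, multiplicativity is a short check: expanding $\alpha_n(x\cdot_{\beta^n}y)$ and $\alpha_n(x)\cdot_{\beta^n}\alpha_n(y)$ and using that $\alpha$ is multiplicative and $\beta$ is a morphism, both sides collapse to $\beta^{2n}(\alpha(x)\cdot\alpha(y))$, so they agree.

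The heart of the proof is to compute the twisted anti-Hom-associator $as_{\alpha_n}^{t}$ and show it is simply $\beta^{2n}$ applied to the original one. Expanding each of its two terms via (\ref{antiHas}) and repeatedly applying $\beta(x\cdot y)=\beta(x)\cdot\beta(y)$ to push the $\beta^n$'s outward, I expect to obtain
\[
 as_{\alpha_n}^{t}(x,y,z)=\beta^{2n}\big((x\cdot y)\cdot\alpha(z)+\alpha(x)\cdot(y\cdot z)\big)=\beta^{2n}\big(as_{\alpha}^{t}(x,y,z)\big).
\]
Since $\beta^{2n}$ is linear, the left skew-symmetry (\ref{HpJJi}) of $as_{\alpha}^{t}$ transfers verbatim: applying $\beta^{2n}$ to $as_{\alpha}^{t}(x,y,z)=-as_{\alpha}^{t}(y,x,z)$ yields $as_{\alpha_n}^{t}(x,y,z)=-as_{\alpha_n}^{t}(y,x,z)$. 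This establishes that $\mathcal{A}_{\beta^n}$ is a Hom-pre-Jacobi-Jordan algebra, and the ``in particular'' statement is the special case $\beta=\alpha$, which is itself a self-morphism of $\mathcal{A}$ by multiplicativity, giving $\mathcal{A}_{\alpha^n}=(A,\alpha^n\mu,\alpha^{n+1})$.

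There is no genuine obstacle here: the result is an instance of the fact that the anti-Hom-associator is natural with respect to morphisms, so the defining identity is preserved under the Yau-type twist. The only place to take care is the bookkeeping of the exponents when commuting $\beta$ and $\alpha$ and moving $\beta^n$ through the products; checking that every term produces exactly $\beta^{2n}$ is the one computation worth carrying out explicitly.
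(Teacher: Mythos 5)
Your argument is correct: the key identity $as_{\beta^n\alpha}^{t}(x,y,z)=\beta^{2n}\bigl(as_{\alpha}^{t}(x,y,z)\bigr)$, obtained by pushing $\beta^n$ through the products using that $\beta$ is a morphism commuting with $\alpha$, immediately transfers the left skew-symmetry, and the multiplicativity check is as you describe. The paper states this proposition without proof (``It is easy to prove the following''), and your computation is exactly the standard Yau-twist argument the author evidently has in mind.
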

\begin{example}
 Let $\mathcal{A}:=(A,\mu)$ be a pre-Jacobi-Jordan algebra and $\beta$ be a morphism of $\mathcal{A}.$ Then, $\mathcal{A}_{\beta^n}:=(A,\mu_{\beta^n}:=\beta^n\mu,\beta^n)$ 
 is a Hom-pre-Jacobi-Jordan algebra for each $n\in\mathbb{N}.$
\end{example}
\begin{proposition}\label{lpHJJHJJ}
 Let $(A,\cdot , \alpha)$ be a left Hom-pre-Jacobi-Jordan algebra. Then the product given by
\begin{eqnarray}
 x\star y=x\cdot y+y\cdot x \label{asHJJ}
\end{eqnarray}
defines a Hom-Jacobi-Jordan algebra structure on $A,$ which is called the associated (or sub-adjacent) Hom-Jacobi-Jordan algebra
of $(A,\cdot, \alpha)$ denoted by $A^C$ and $(A,\cdot, \alpha)$ is also called a compatible left Hom-pre-Jacobi-Jordan algebra structure on the Hom-Jacobi-Jordan algebra $A^C=(A,\star,\alpha).$
\end{proposition}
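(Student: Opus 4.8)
The plan is to verify the three defining conditions of a Hom-Jacobi-Jordan algebra for $(A,\star,\alpha)$: commutativity of $\star$, multiplicativity of $\alpha$ with respect to $\star$, and the Hom-Jacobi identity (\ref{JJi}). The first two are immediate. Commutativity follows at once from the symmetric definition (\ref{asHJJ}), since $x\star y=x\cdot y+y\cdot x=y\star x$. Multiplicativity follows from that of $\cdot$, because
\begin{eqnarray}
\alpha(x\star y)=\alpha(x\cdot y)+\alpha(y\cdot x)=\alpha(x)\cdot\alpha(y)+\alpha(y)\cdot\alpha(x)=\alpha(x)\star\alpha(y).\nonumber
\end{eqnarray}

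The substance of the proof lies in the Hom-Jacobi identity $\circlearrowleft_{(x,y,z)}(x\star y)\star\alpha(z)=0$. First I would expand a single summand by applying the definition of $\star$ twice:
\begin{eqnarray}
(x\star y)\star\alpha(z)=(x\cdot y)\cdot\alpha(z)+(y\cdot x)\cdot\alpha(z)+\alpha(z)\cdot(x\cdot y)+\alpha(z)\cdot(y\cdot x).\nonumber
\end{eqnarray}
The first two terms are exactly $(x\star y)\cdot\alpha(z)$, which by the defining identity (\ref{HpJJi2}) equals $-\alpha(x)\cdot(y\cdot z)-\alpha(y)\cdot(x\cdot z)$. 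Substituting, each summand becomes a sum of four monomials all of the shape $\alpha(u)\cdot(v\cdot w)$.

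Then I would form the cyclic sum over the permutations $(x,y,z)$, $(y,z,x)$, $(z,x,y)$, producing twelve monomials in total, and collect the coefficient of each of the six distinct products $\alpha(u)\cdot(v\cdot w)$. The key observation is that every such product occurs exactly twice, once with coefficient $+1$ and once with coefficient $-1$, so the whole expression vanishes termwise; for instance $\alpha(x)\cdot(y\cdot z)$ appears with $-1$ from the summand indexed by $(x,y,z)$ and with $+1$ from the summand indexed by $(y,z,x)$, and the remaining five products pair off analogously. The main obstacle is purely bookkeeping: one must track the signs of all twelve terms across the three cyclic permutations and confirm the pairwise cancellation without error. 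Note that only the single identity (\ref{HpJJi2}) together with the commutativity of the anticommutator is used; in particular right skew-symmetry is never invoked, which is consistent with treating only the left case.
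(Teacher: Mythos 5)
Your proposal is correct and follows essentially the same route as the paper: expand $(x\star y)\star\alpha(z)$ into the four monomials $(x\cdot y)\cdot\alpha(z)+(y\cdot x)\cdot\alpha(z)+\alpha(z)\cdot(x\cdot y)+\alpha(z)\cdot(y\cdot x)$ and kill the cyclic sum using the defining left skew-symmetry. The only (cosmetic) difference is that the paper regroups the twelve terms of the cyclic sum as $\circlearrowleft_{(x,y,z)}\bigl(as_{\alpha}^t(x,y,z)+as_{\alpha}^t(y,x,z)\bigr)$, which vanishes summand-by-summand by (\ref{HpJJi}), whereas you substitute via the equivalent form (\ref{HpJJi2}) and check the pairwise cancellation of the resulting monomials across the three cyclic summands.
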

\begin{proof}
 For all $x,y,z\in A,$ we prove (\ref{JJi}) as follows
 \begin{eqnarray}
  &&J_{\alpha}(x,y,z)=\circlearrowleft_{(x,y,z)}(x\star y)\star\alpha(z)\nonumber\\
  &&=\circlearrowleft_{(x,y,z)}
  \Big((x\cdot y)\cdot\alpha(z)
  +(y\cdot x)\cdot\alpha(z)+\alpha(z)\cdot(x\cdot y)+\alpha(z)\cdot(y\cdot x) \Big)\nonumber\\
  &&=\circlearrowleft_{(x,y,z)}\Big(as_{\alpha}^t(x,y,z)+as_{\alpha}^t(y,x,z) \Big)=0 \mbox{ ( by (\ref{HpJJi}) ).}\nonumber
 \end{eqnarray}
\end{proof}
As consequence, we get
\begin{proposition}
 Let $(A,\cdot, \alpha)$ be a Hom-algebra. Then $(A, \cdot, \alpha)$ is a left Hom-pre-Jacobi-Jordan algebra if and
only if $(A,\star , \alpha)$ defined by Eq. (\ref{asHJJ}) is a Hom-Jacobi-Jordan algebra and $(A, L, \alpha)$ is a representation of
$(A,\star, \alpha),$ where $L$ denotes the left multiplication operator on $A.$
\end{proposition}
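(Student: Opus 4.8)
The plan is to recognize that, once the representation $\rho$ is taken to be the left multiplication $L$ and the twisting map $\phi$ is taken to be $\alpha$, the two representation axioms (\ref{rHJJ1}) and (\ref{rHJJ2}) for $(A,\star,\alpha)$ become verbatim the two defining conditions of a left Hom-pre-Jacobi-Jordan algebra. Thus the whole statement reduces to matching axioms on the two sides, with essentially no computation to grind through.

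First I would unwind what it means for $(A,L,\alpha)$ to be a representation of the commutative product $\star$. Evaluating (\ref{rHJJ1}), namely $\phi\rho(x)=\rho(\alpha(x))\phi$, with $\rho=L$ and $\phi=\alpha$ on an arbitrary element $z$, one gets $\alpha(x\cdot z)=\alpha(x)\cdot\alpha(z)$, which is exactly the multiplicativity of $\cdot$ with respect to $\alpha$. Evaluating (\ref{rHJJ2}), namely $\rho(x\star y)\phi=-\rho(\alpha(x))\rho(y)-\rho(\alpha(y))\rho(x)$, on $z$ gives $(x\star y)\cdot\alpha(z)=-\alpha(x)\cdot(y\cdot z)-\alpha(y)\cdot(x\cdot z)$, which is precisely identity (\ref{HpJJi2}), the left skew-symmetry of the anti-Hom-associator.

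With this dictionary in hand, the two implications are immediate. For the forward direction I would assume $(A,\cdot,\alpha)$ is a left Hom-pre-Jacobi-Jordan algebra; Proposition \ref{lpHJJHJJ} then furnishes the Hom-Jacobi-Jordan structure on $(A,\star,\alpha)$ directly, the multiplicativity built into the definition yields (\ref{rHJJ1}) for $L$, and the defining identity (\ref{HpJJi2}) yields (\ref{rHJJ2}), so $(A,L,\alpha)$ is a representation. Conversely, assuming both that $(A,\star,\alpha)$ is a Hom-Jacobi-Jordan algebra and that $(A,L,\alpha)$ is a representation, I would simply read off the two defining conditions of a left Hom-pre-Jacobi-Jordan algebra: (\ref{rHJJ1}) returns the multiplicativity of $\cdot$, and (\ref{rHJJ2}) returns (\ref{HpJJi2}), equivalently (\ref{HpJJi}).

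Since both directions are the same axiom-identification read in opposite order, there is no genuine analytic obstacle. The only point demanding care is the bookkeeping showing that the representation axiom (\ref{rHJJ2}) for $L$ is \emph{literally} the identity (\ref{HpJJi2}), rather than merely equivalent to it modulo the Jacobi-Jordan identity; placing the twist $\phi=\alpha$ and the left multiplications $\rho(\alpha(x))\rho(y)$ in the right positions is exactly what makes the two sides coincide. I also note, as a minor observation, that the Hom-Jacobi-Jordan hypothesis on $(A,\star,\alpha)$ in the converse is redundant, being automatically forced by Proposition \ref{lpHJJHJJ} as soon as $\cdot$ is shown to be left Hom-pre-Jacobi-Jordan.
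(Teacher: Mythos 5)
Your proposal is correct, and since the paper's own proof is just the word ``Straightforward,'' your axiom-matching argument is precisely the intended one: with $\rho=L$ and $\phi=\alpha$, condition (\ref{rHJJ1}) is the multiplicativity of $\cdot$ and condition (\ref{rHJJ2}) is literally (\ref{HpJJi2}), while Proposition \ref{lpHJJHJJ} supplies the Hom-Jacobi-Jordan structure in the forward direction. Your side remark that the Hom-Jacobi-Jordan hypothesis in the converse is redundant is also accurate.
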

\begin{proof}
 Straightforward.
\end{proof}
\begin{proposition}\label{HJJpJJ}
 Let $(A,\ast, \alpha)$ be a Hom-Jacobi-Jordan algebra and $(V, \rho, \phi)$ be a representation. If $T$ is
an $\mathcal{O}$-operator associated to $\rho$, then $(V, \cdot, \phi)$ is a left Hom-pre-Jacobi-Jordan algebra, where
\begin{eqnarray}
 u\cdot v:=\rho(T(u))v \mbox{ for $u,v\in V$.} \label{revasHJJ1}
\end{eqnarray}
Therefore there exists an associated Hom-Jacobi-Jordan algebra structure on $V$ given by Eq. (\ref{asHJJ}) and $T$
is a homomorphism of Hom-Jacobi-Jordan algebras. Moreover, $T(V):=\{T(v)|v\in V\} \subset A$ is a Hom-Jacobi-Jordan
subalgebra of $(A, \ast, \alpha)$ and there is an induced left Hom-pre-Jacobi-Jordan algebra structure on $T(V)$ given
by
\begin{eqnarray}
 T(u)\bullet T(v):=T(u\ast v) \mbox{ for $u,v\in V$.} \label{revasHJJ2}
\end{eqnarray}
The corresponding associated Hom-Jacobi-Jordan algebra structure on $T(V)$ given by Eq. (\ref{asHJJ}) is just a
Hom-Jacobi-Jordan subalgebra of $(A, \ast, \alpha)$ and $T$ is a homomorphism of left Hom-pre-Jacobi-Jordan algebras.
\end{proposition}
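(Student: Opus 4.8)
The plan is to establish the four assertions in order, reducing everything to the two $\mathcal{O}$-operator identities (\ref{rbHJJ1})--(\ref{rbHJJ2}) together with the representation axioms (\ref{rHJJ1})--(\ref{rHJJ2}). First I would check that the product $\cdot$ defined by (\ref{revasHJJ1}) is multiplicative with respect to $\phi$: writing $\phi(u\cdot v)=\phi(\rho(T(u))v)$ and applying (\ref{rHJJ1}) gives $\rho(\alpha T(u))\phi(v)$, after which (\ref{rbHJJ1}) turns $\alpha T(u)$ into $T(\phi(u))$, so that $\phi(u\cdot v)=\phi(u)\cdot\phi(v)$. The core computation is the left skew-symmetry (\ref{HpJJi2}). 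Here I would start from $(u\star v)\cdot\phi(w)=\rho(T(u\star v))\phi(w)$, use (\ref{rbHJJ2}) to replace $T(u\star v)=T(\rho(T(u))v+\rho(T(v))u)$ by $T(u)\ast T(v)$, then expand $\rho(T(u)\ast T(v))\phi(w)$ via (\ref{rHJJ2}) and finally convert $\alpha T(u)$, $\alpha T(v)$ back into $T(\phi(u))$, $T(\phi(v))$ using (\ref{rbHJJ1}); the result is exactly $-\phi(u)\cdot(v\cdot w)-\phi(v)\cdot(u\cdot w)$, which is (\ref{HpJJi2}). This verifies that $(V,\cdot,\phi)$ is a left Hom-pre-Jacobi-Jordan algebra.

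The second and third assertions then follow cheaply. The associated Hom-Jacobi-Jordan structure $\star$ on $V$ given by (\ref{asHJJ}) exists by Proposition \ref{lpHJJHJJ}, and $T$ is a morphism onto $(A,\ast,\alpha)$ because $T(u\star v)=T(\rho(T(u))v+\rho(T(v))u)=T(u)\ast T(v)$ by (\ref{rbHJJ2}), while $T\phi=\alpha T$ is (\ref{rbHJJ1}). Consequently $T(V)$ is stable under $\ast$ (since $T(u)\ast T(v)=T(u\star v)$) and under $\alpha$ (since $\alpha T(u)=T(\phi(u))$), hence is a Hom-Jacobi-Jordan subalgebra of $A$.

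For the induced left Hom-pre-Jacobi-Jordan structure on $T(V)$, defined by (\ref{revasHJJ2}) as $T(u)\bullet T(v):=T(u\cdot v)=T(\rho(T(u))v)$, I expect the main obstacle to be well-definedness, since $T$ need not be injective: the value must depend only on $T(u)$ and $T(v)$. Dependence through $T(u)$ alone is immediate, and for the second argument I would show that $\ker T$ is absorbed by every operator $\rho(T(u))$. This is precisely where (\ref{rbHJJ2}) is used once more: if $T(w)=0$, then $T(u)\ast T(w)=0$ forces $T(\rho(T(u))w+\rho(T(w))u)=T(\rho(T(u))w)=0$, so $\rho(T(u))w\in\ker T$ and $\bullet$ is well defined. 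Once this is settled, the remaining claims transport along the surjective morphism $T$: multiplicativity of $\bullet$ follows from that of $\cdot$ together with (\ref{rbHJJ1}); the left Hom-pre-Jacobi-Jordan identity for $\bullet$ is the image under $T$ of (\ref{HpJJi2}) for $\cdot$; and the anticommutator of $\bullet$ agrees with $\ast$ restricted to $T(V)$, because $T(u)\bullet T(v)+T(v)\bullet T(u)=T(u\cdot v+v\cdot u)=T(u)\ast T(v)$ by (\ref{rbHJJ2}). The last identity simultaneously shows that the associated Hom-Jacobi-Jordan algebra of $(T(V),\bullet,\alpha)$ is exactly the Hom-Jacobi-Jordan subalgebra $T(V)\subset A$ and that $T$ is a homomorphism of left Hom-pre-Jacobi-Jordan algebras.
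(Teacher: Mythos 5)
Your proof is correct and follows essentially the same route as the paper: the key step in both is to rewrite $(u\star v)\cdot\phi(w)=\rho\bigl(T(u)\ast T(v)\bigr)\phi(w)$ using (\ref{rbHJJ2}), expand via (\ref{rHJJ2}), and convert $\alpha T$ back into $T\phi$ with (\ref{rbHJJ1}). You are in fact more careful than the paper, which dismisses everything after the core computation with ``the other conclusions follow immediately,'' whereas you explicitly check multiplicativity of $\cdot$ with respect to $\phi$ and the well-definedness of $\bullet$ on $T(V)$ when $T$ fails to be injective (via $\rho(T(u))\ker T\subset\ker T$).
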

\begin{proof}
 Let $u, v, w\in V$ and put  
 $u\star v=u\cdot v + v\cdot u.$ Note
first that $T(u\star v) = T(u)\ast T(v).$ Then using (\ref{rbHJJ1}), we compute 
(\ref{HpJJi2}) as follows
\begin{eqnarray}
 &&(u\star v)\cdot\phi(w)=\rho(T(u)\ast T(v))\phi(w)-\rho(T\phi(u))\rho(tv)w-\rho(T\phi(v))\rho(tu)w\nonumber\\
 &&=-\phi(u)\cdot(v\cdot w)-\phi(v)\cdot(u\cdot w).\nonumber
\end{eqnarray}
Therefore, $(V, \cdot, \phi)$ is a left Hom-pre-Jacobi-Jordan algebra. The other conclusions follow immediately.
\end{proof}
An obvious consequence of Proposition \ref{HJJpJJ} is the following construction of a left Hom-pre-Jacobi-Jordan algebra in terms of a Rota-Baxter operator (of weight zero) of a Hom-Jacobi-Jordan algebra.
\begin{corollary}
 Let $(A,\ast, \alpha)$ be a Hom-Jacobi-Jordan algebra and $P$ be a Rota-Baxter operator (of weight
zero) on $A.$ Then there is a left Hom-pre-Jacobi-Jordan algebra structure on $A$ given by
\begin{eqnarray}
 x\cdot y:=P(x)\ast y \mbox{ for all $x,y\in A.$}
\end{eqnarray}
\end{corollary}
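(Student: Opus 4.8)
The plan is to obtain this corollary as an immediate application of Proposition \ref{HJJpJJ} by recognizing the Rota-Baxter operator $P$ as an $\mathcal{O}$-operator associated to a suitable representation. The natural choice is the regular representation $(A, L, \alpha)$ of $(A, \ast, \alpha)$, where $L(x)y = x \ast y$, which was already shown to be a representation.

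First I would verify that $P$ is an $\mathcal{O}$-operator of $A$ associated to the regular representation $L$. Specializing the definition of an $\mathcal{O}$-operator by taking $V = A$, $\rho = L$, and $\phi = \alpha$, the two defining conditions (\ref{rbHJJ1}) and (\ref{rbHJJ2}) become $P\alpha = \alpha P$ together with
\[
P(x) \ast P(y) = P\big(L(P(x))y + L(P(y))x\big) = P\big(P(x) \ast y + P(y) \ast x\big).
\]
Using the commutativity of $\ast$ to rewrite $P(y) \ast x = x \ast P(y)$, the right-hand side equals $P(P(x) \ast y + x \ast P(y))$, which is exactly the weight-zero Rota-Baxter identity (\ref{Rota-Baxt}) with $\mu = \ast$ and $\lambda = 0$. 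Hence any Rota-Baxter operator of weight zero commuting with $\alpha$ is precisely an $\mathcal{O}$-operator for the regular representation, as already observed in the remark following the definition of $\mathcal{O}$-operators.

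With this identification, I would apply Proposition \ref{HJJpJJ} to $T = P$ associated to $(A, L, \alpha)$. The proposition then endows $V = A$ with a left Hom-pre-Jacobi-Jordan algebra structure given by $x \cdot y = \rho(T(x))y = L(P(x))y = P(x) \ast y$, which is exactly the asserted multiplication. The only point deserving attention is the commutation condition $P\alpha = \alpha P$ required to match (\ref{rbHJJ1}); this is part of the standing convention for Rota-Baxter operators on Hom-algebras and presents no real obstacle. Beyond that, the argument is purely formal, so there is no substantial difficulty: all of the content resides in Proposition \ref{HJJpJJ}.
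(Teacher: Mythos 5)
Your proposal is correct and follows exactly the route the paper intends: the corollary is stated there as an immediate consequence of Proposition \ref{HJJpJJ}, obtained by viewing the Rota-Baxter operator $P$ as an $\mathcal{O}$-operator for the regular representation $(A,L,\alpha)$ and taking $T=P$, which yields $x\cdot y=L(P(x))y=P(x)\ast y$. Your remark that the compatibility $P\alpha=\alpha P$ must be assumed to match condition (\ref{rbHJJ1}) is a fair observation, since the paper's definition of a Rota-Baxter operator does not state it explicitly.
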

\begin{proof}
 Straightforward.
\end{proof}
\begin{corollary}
 Let $(A,\ast, \alpha)$ be a Hom-Jacobi-Jordan algebra. Then there exists a compatible left Hom-pre-Jacobi-
Jordan algebra structure on $A$ if and only if there exists an invertible $\mathcal{O}$-operator of $(A, \ast, \alpha).$
\end{corollary}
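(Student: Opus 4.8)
The plan is to prove both implications using the structural results already established. Recall first that, by Proposition~\ref{lpHJJHJJ}, a \emph{compatible} left Hom-pre-Jacobi-Jordan structure $(A,\cdot,\alpha)$ on $(A,\ast,\alpha)$ is precisely a left Hom-pre-Jacobi-Jordan multiplication $\cdot$ whose anticommutator reproduces the given product, i.e. $x\ast y=x\cdot y+y\cdot x$ for all $x,y\in A$, as in Eq.~(\ref{asHJJ}). So the two directions amount to producing an invertible $\mathcal{O}$-operator from such a $\cdot$, and conversely.

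For the implication ($\Rightarrow$), suppose a compatible structure $(A,\cdot,\alpha)$ exists. The idea is that the identity map itself is the desired invertible $\mathcal{O}$-operator. Indeed, by the characterization of left Hom-pre-Jacobi-Jordan algebras via representations (the proposition preceding Proposition~\ref{HJJpJJ}), the left multiplication $L$ furnishes a representation $(A,L,\alpha)$ of the sub-adjacent algebra $(A,\star,\alpha)=(A,\ast,\alpha)$. I would then take $V=A$, $\rho=L$, $\phi=\alpha$ and $T=\mathrm{id}_A$, and check the two axioms of an $\mathcal{O}$-operator: Eq.~(\ref{rbHJJ1}) reads $\mathrm{id}\circ\alpha=\alpha\circ\mathrm{id}$ and is trivial, while Eq.~(\ref{rbHJJ2}) becomes $u\ast v=L(u)v+L(v)u=u\cdot v+v\cdot u$, which is exactly the compatibility identity. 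Since $\mathrm{id}_A$ is invertible, this produces an invertible $\mathcal{O}$-operator of $(A,\ast,\alpha)$.

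For the converse ($\Leftarrow$), let $T:V\to A$ be an invertible $\mathcal{O}$-operator associated to a representation $(V,\rho,\phi)$. By Proposition~\ref{HJJpJJ}, $(V,\cdot_V,\phi)$ is a left Hom-pre-Jacobi-Jordan algebra with $u\cdot_V v:=\rho(T(u))v$, and $T$ is a homomorphism onto the sub-adjacent Hom-Jacobi-Jordan algebra $(A,\ast,\alpha)$. The plan is to transport this pre-structure along $T$ by setting
\begin{eqnarray}
x\cdot y:=T\big(\rho(x)T^{-1}(y)\big)\quad\text{for all }x,y\in A,\nonumber
\end{eqnarray}
which is the image under $T$ of the product $\cdot_V$ on $V$. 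Because $T$ is bijective and satisfies $T\phi=\alpha T$ by Eq.~(\ref{rbHJJ1}), it is an isomorphism of Hom-modules with $\alpha=T\phi T^{-1}$; hence transporting $\cdot_V$ yields again a left Hom-pre-Jacobi-Jordan algebra, now with twisting map $\alpha$, so that identity~(\ref{HpJJi2}) holds for $(A,\cdot,\alpha)$. For compatibility, setting $u=T^{-1}(x)$ and $v=T^{-1}(y)$ I would compute
\begin{eqnarray}
x\cdot y+y\cdot x=T\big(\rho(T(u))v+\rho(T(v))u\big)=T(u)\ast T(v)=x\ast y,\nonumber
\end{eqnarray}
using precisely the $\mathcal{O}$-operator relation~(\ref{rbHJJ2}). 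Thus $(A,\cdot,\alpha)$ is a compatible left Hom-pre-Jacobi-Jordan structure.

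The routine verifications aside, the only real point requiring care is the transport-of-structure step in the converse: one must confirm that pushing $\cdot_V$ forward along $T$ preserves the defining identity of a left Hom-pre-Jacobi-Jordan algebra \emph{and} replaces the twisting map $\phi$ by $\alpha$. Both facts hinge on $T$ being an isomorphism of Hom-modules, which is exactly the content of axiom~(\ref{rbHJJ1}); this is where invertibility of $T$ is indispensable, and it is the step I would write out most carefully.
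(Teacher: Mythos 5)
Your proposal follows essentially the same route as the paper: in the forward direction the identity map is taken as an invertible $\mathcal{O}$-operator with respect to the left-multiplication representation of the sub-adjacent algebra, and in the converse Proposition \ref{HJJpJJ} is invoked to transport the induced pre-structure along $T$ via $x\cdot y=T(\rho(x)T^{-1}(y))$, with compatibility checked exactly as in the paper using (\ref{rbHJJ2}). Your added remark that the transported product inherits the defining identity with twisting map $\alpha$ because $T$ intertwines $\phi$ and $\alpha$ is a correct and slightly more explicit justification of a step the paper leaves to Proposition \ref{HJJpJJ}.
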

\begin{proof}
 Let $(A,\cdot, \alpha)$ be a left Hom-pre-Jacobi-Jordan algebra and $(A,\star, \alpha)$ be the associated Hom-Jacobi-Jordan
algebra. Then the identity map $id : A\rightarrow A$ is an invertible $\mathcal{O}$-operator of $(A,\star,\alpha)$ associated to $(A, ad, \alpha).$

Conversely, suppose that there exists an invertible $\mathcal{O}$-operator $T$ of $(A, \ast, \alpha)$ associated
to a representation $(V, \rho, \phi),$ then by Proposition \ref{HJJpJJ}, there is a left Hom-pre-Jacobi-Jordan algebra
structure on $T(V)=A$ given by
$$T(u)\cdot T(v)=T(\rho(T(u))v),\mbox{ for all $u, v \in V.$}$$
If we set $T(u)=x$ and $T(v)=y,$ then we obtain
$$ x\cdot y=T(\rho(x)T^{-1}(y)), \mbox{ for all $x, y \in A.$}.$$
It is a compatible left Hom-pre-Jacobi-Jordan algebra structure on $(A,\ast, \alpha).$ Indeed,
\begin{eqnarray}
 &&x\cdot y+y\cdot x=T(\rho(x)T^{-1}(y)+\rho(y)T^{-1}(x))\nonumber\\
 &&= T(T^{-1}(x))\ast T(T^{-1}(y))=x\ast y.\nonumber
\end{eqnarray}
\end{proof}
\begin{proposition}
 Let $T : V\rightarrow A$ be an $\mathcal{O}$-operator on the Hom-Jacobi-Jordan algebra $( A, \ast,\alpha)$ with respect to the representation $(V,\rho, \phi).$
Let us define a map $\rho_T : V\rightarrow  gl(A)$ given by
 \begin{eqnarray}
  \rho_T(u)x:=T(u)\ast x-T(\rho(x)u) \mbox{ for all $(u,x)\in V\times A $}\nonumber
 \end{eqnarray}
Then, the triplet $(A,\rho_T,\alpha)$ is a representation of the sub-adjacent Hom-Jacobi-Jordan algebra $V^c=(V,\star,\phi)$ associated with the left Hom-pre-Jacobi-Jordan algebra 
$(V,\cdot,\phi)$ defined in Proposition \ref{HJJpJJ}.
\end{proposition}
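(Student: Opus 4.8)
The plan is to verify the two defining axioms (\ref{rHJJ1}) and (\ref{rHJJ2}) of a representation directly, taking as underlying Hom-Jacobi-Jordan algebra the sub-adjacent algebra $V^c=(V,\star,\phi)$ (so the twisting map of the algebra is $\phi$) and as representation space $A$ equipped with its linear map $\alpha$. Thus I must establish, for all $u,v\in V$ and $x\in A$, the compatibility $\alpha\,\rho_T(u)=\rho_T(\phi(u))\,\alpha$ and the structural relation $\rho_T(u\star v)\,\alpha=-\rho_T(\phi(u))\rho_T(v)-\rho_T(\phi(v))\rho_T(u)$, where $\star$ is the product (\ref{asHJJ}) built from the left Hom-pre-Jacobi-Jordan structure $u\cdot v=\rho(T(u))v$ of (\ref{revasHJJ1}).

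The first relation is a short direct computation. Applying $\alpha$ to $\rho_T(u)x=T(u)\ast x-T(\rho(x)u)$ and using the multiplicativity of $\alpha$ with respect to $\ast$, the intertwining relation $\alpha T=T\phi$ of (\ref{rbHJJ1}), and the compatibility $\phi\rho(x)=\rho(\alpha(x))\phi$ of (\ref{rHJJ1}), one rewrites $\alpha(T(u)\ast x)=T(\phi(u))\ast\alpha(x)$ and $\alpha(T(\rho(x)u))=T(\rho(\alpha(x))\phi(u))$; the result is precisely $\rho_T(\phi(u))(\alpha(x))$.

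For the second relation I would expand $\rho_T(u\star v)(\alpha(x))$ using $T(u\star v)=T(u)\ast T(v)$ (a consequence of (\ref{rbHJJ2}) and (\ref{asHJJ})) together with $u\star v=\rho(T(u))v+\rho(T(v))u$, and separately expand the composites $\rho_T(\phi(u))\rho_T(v)(x)$ and $\rho_T(\phi(v))\rho_T(u)(x)$ by substituting the definition of $\rho_T$ twice. The computation then closes through three cancellation mechanisms. First, the purely $\ast$-product term $(T(u)\ast T(v))\ast\alpha(x)$ is disposed of by the Hom-Jacobi-Jordan identity (\ref{JJi}) applied to the triple $(T(u),T(v),x)$, which after using $\alpha T=T\phi$ and commutativity rewrites it as $-T(\phi(u))\ast(T(v)\ast x)-T(\phi(v))\ast(T(u)\ast x)$, matching two terms produced by the composites. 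Second, expanding the mixed products $T(\phi(u))\ast T(\rho(x)v)$ and $T(\phi(v))\ast T(\rho(x)u)$ via (\ref{rbHJJ2}) yields terms $T(\rho(T(\rho(x)v))\phi(u))$ and $T(\rho(T(\rho(x)u))\phi(v))$ that cancel against the remaining composite contributions. Third, what survives is a single expression $T[\,\cdots\,]$, and I would show the bracketed element of $V$ is zero by invoking the representation identity (\ref{rHJJ2}) for $\rho$, applied with $a=T(v),b=x$ and with $a=T(u),b=x$; this converts $\rho(T(v)\ast x)\phi(u)$ and $\rho(T(u)\ast x)\phi(v)$ into sums that annihilate the leftover terms in pairs.

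I expect the main obstacle to be purely organizational: the second axiom unfolds into roughly a dozen terms, and the argument only closes once each of the three identities is deployed on exactly the correct grouping. The conceptual content is light—no new identity is required beyond (\ref{JJi}), (\ref{rHJJ2}), (\ref{rbHJJ1}) and (\ref{rbHJJ2})—so the real work is to track the terms carefully, use commutativity of $\ast$ to align the $\ast$-products, and confirm that the final bracketed sum in $V$ vanishes identically.
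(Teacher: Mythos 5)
Your plan is correct and follows essentially the same route as the paper's proof: the first axiom via multiplicativity, $T\phi=\alpha T$ and $\phi\rho(x)=\rho(\alpha(x))\phi$; the second by expanding both composites, killing the pure $\ast$-terms with (\ref{JJi}) on $(T(u),T(v),x)$, cancelling the $T(\rho(T(\rho(x)v))\phi(u))$-type terms produced by (\ref{rbHJJ2}), and absorbing the remainder with (\ref{rHJJ2}) applied to $\rho(T(v)\ast x)\phi(u)$ and $\rho(T(u)\ast x)\phi(v)$. The three cancellation mechanisms you identify are exactly the ones the paper deploys, in the same groupings.
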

\begin{proof}
 First, pick $(u,x)\in V\times A.$ Then, the multiplicativity of $\alpha$ with respect to $\ast$, conditions (\ref{rbHJJ1}) and (\ref{rHJJ1}) in $(V,\rho,\phi)$ give rise  to (\ref{rHJJ1}) for $(A,\rho_T,\alpha)$ as follows
 \begin{eqnarray}
 && \alpha(\rho_T(u)x)=\alpha T(u)\ast\alpha(x)-\alpha T(\rho(x)u)=T\phi(u)\ast\alpha(x)-T(\rho(\alpha(x))\phi(u))=
 \rho_T(\phi(u))\alpha(x).\nonumber
  \end{eqnarray}
Next, let $u,v\in V,\ x\in A.$ Recall that $u\star v=\rho(Tu)v+\rho(Tv)u$ and $T(u\star v)=T(u)\ast T(v).$ Then, by  straightforward  computaions, we have
\begin{eqnarray}
 &&\rho_T(u\star v)\alpha(x)=(T(u)\ast T(v))\ast\alpha(x)-T\Big(\rho(\alpha(x))\rho(Tu)v\Big)-T\Big(\rho(\alpha(x))\rho(Tv)u\Big).\nonumber
\end{eqnarray}
Also, we compute
\begin{eqnarray}
 &&\rho_T(\phi(u))\rho_T(v)x=T\phi(u)\ast(Tv\ast x)-T\phi(u)\ast T(\rho(x)v)-
 T\Big(\rho(Tv\ast x )\phi(u)\Big)\nonumber\\
 &&+T\Big( \rho(T(\rho(x)v))\phi(u)\Big)=
 T\phi(u)\ast(Tv\ast x)-T\Big(\rho(T\phi(u))\rho(x)v+\rho(T(\rho(x)v))\phi(u) \Big)\nonumber\\
 &&-
 T\Big(\rho(T\phi(v))\rho(x)u+\rho(\alpha(x))\rho(Tv)u \Big)+T\Big( \rho(T(\rho(x)v))\phi(u)\Big)
 \mbox{ (by (\ref{rbHJJ2}), (\ref{rbHJJ1}) and (\ref{rHJJ2})  ) }\nonumber\\
 &&=
 T\phi(u)\ast(Tv\ast x)-T\Big(\rho(T\phi(u))\rho(x)v\Big)+
 T\Big(\rho(T\phi(v))\rho(x)u\Big)+T\Big(\rho(\alpha(x))\rho(Tv)u \Big).\nonumber
\end{eqnarray}
Switching $u$ and $v$ in the above equation, we come to
\begin{eqnarray}
 \rho_T(\phi(v))\rho_T(u)x=
 T\phi(v)\ast(Tu\ast x)-T\Big(\rho(T\phi(v))\rho(x)u\Big)+
 T\Big(\rho(T\phi(u))\rho(x)v\Big)+T\Big(\rho(\alpha(x))\rho(Tu)v \Big).\nonumber
\end{eqnarray}
It follows by (\ref{JJi}) that
\begin{eqnarray}
 -\rho_T(\phi(u))\rho_T(v)x-\rho_T(\phi(v))\rho_T(u)x=\rho_T(u\star v)\alpha(x),\nonumber
\end{eqnarray}
i.e., (\ref{rHJJ2}) holds in $(A,\rho_T,\alpha).$
\end{proof}
\subsection{Representations and $\mathcal{O}$-operators} This subsection is devoted to the study of representations and 
$\mathcal{O}$-operators of Hom-pre-Jacobi-Jordan algebras. We first give the definition of representations of pre-Jacobi-Jordan algebras and then, generalize this definition in the Hom-case. As mentioned, let first introduce the notion of a representation of a left Jacobi-Jordan algebra.
\begin{definition}\label{rpHJJcl}
 A representation of a left pre-Jacobi-Jordan algebra $(A,\cdot)$ on a vector space $V$ consists of a pair $(\rho, \lambda),$ where $\rho,\lambda: A\rightarrow gl(V )$  are linear maps satisfying:
\begin{eqnarray}
&&\rho(x\star y)=-\rho(x)\rho(y)-\rho(y)\rho(x)\label{crpJJ1}\\
 &&\lambda(y)\lambda(x)+\lambda(x\cdot y)=-\lambda(y)\rho(x)-\rho(x)\lambda(y) \label{crppHJJ2}
\end{eqnarray}
for all $x,y\in A$ where $x\star y:=x\cdot y+y\cdot x.$
\end{definition}
Observe that, Condition (\ref{crpJJ1}) means that $(V,\rho)$ is a representation of the subadjacent Jacobi-Jordan of $(A,\cdot).$ One can easily prove that $(V,\rho,\lambda)$  is a representation of a left pre-Jacobi-Jordan algebra $(A,\cdot)$ if and only if the direct sum $A\oplus V$ of vector space turns into a left pre-Jacobi-Jordan algebra under the product
\begin{eqnarray}
(x+u)\diamond (y+v):=x\cdot y+(\rho(x)v+\lambda(y)u) \mbox{ for all $x,y\in A$ 
 and $u,v\in V.$}\nonumber
\end{eqnarray}
This left pre-Jacobi-Jordan algebra is called a semi-direct product of $A$ and $V$  denoted  by $A\ltimes V.$
\begin{remark}
 Our definition of representations of left pre-Jacobi-Jordan algebras given above is different of those given in \cite{cedh}. 
\end{remark}

Now, we can extend the notions of representations of pre-Jacobi-Jordan algebras in the Hom-case as follows:
\begin{definition}
 A representation of a left Hom-pre-Jacobi-Jordan algebra $(A,\cdot, \alpha)$ on a vector space $V$ with
respect to $\phi\in gl(V)$ consists of a pair $(\rho, \lambda),$ where $\rho : A\rightarrow gl(V )$ is a representation of the sub-adjacent Hom-Jacobi-Jordan algebra $A^C$ on $V$ with respect to $\phi\in gl(V ),$ and $\lambda: A\rightarrow gl(V )$ is a linear map satisfying:
\begin{eqnarray}
 &&\phi\lambda(x)=\lambda(\alpha(x))\phi\label{rppHJJ1}\\
 &&\lambda(\alpha(y))\lambda(x)+\lambda(x\cdot y)\phi=-\lambda(\alpha(y))\rho(x)-\rho(\alpha(x))\lambda(y) \label{rppHJJ2}
\end{eqnarray}
\end{definition}
\begin{remark}
 If $\alpha=Id_A,\ \phi=Id_V,$ then $(V,\rho,\lambda)$ is a representation of the pre-Jacobi-Jordan algebra $(A,\cdot)$ (see Definition \ref{rpHJJcl} above).
\end{remark}
Hence, we obtain
\begin{example}
 Let $(A, \cdot)$ be a  left pre-Jacobi-Jordan algebra and $(V,\rho,\lambda)$ be  a representation of $(A,\cdot)$ in the usual sense. Then $(V, \rho,\lambda, Id_V)$ is a representation of the left Hom-pre-Jacobi-Jordan algebra $\mathcal{A}:=(A,\cdot, Id_A).$
\end{example}
To give other examples of representations of Hom-Jacobi-Jordan algebras, let prove the following:
\begin{proposition}\label{PEx1}
 Let $\mathcal{A}_1:=(A_1, \cdot, \alpha_1)$ and $\mathcal{A}_2:=(A_2, \intercal, \alpha_2)$ be two left Hom-pre-Jacobi-Jordan algebras and $f:\mathcal{A}_1\rightarrow \mathcal{A}_2$ be a morphism of left pre-Hom-Jacobi-Jordan algebras. Then $A_2^f:=(A_2, \rho,\lambda,\alpha_2)$ is a representation of $\mathcal{A}_1$ where
 $\rho(a)b:=f(a)\intercal b$  and $\lambda(a)b=b\intercal f(a)$ for all $(a,b)\in A_1\times A_2.$
\end{proposition}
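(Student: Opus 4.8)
The plan is to verify directly the two families of axioms defining a representation of a left Hom-pre-Jacobi-Jordan algebra: first that $\rho$ is a representation of the sub-adjacent Hom-Jacobi-Jordan algebra $A_1^C=(A_1,\star_1,\alpha_1)$ on $A_2$ with respect to $\alpha_2$, i.e.\ conditions (\ref{rHJJ1}) and (\ref{rHJJ2}); and second that the pair $(\rho,\lambda)$ satisfies (\ref{rppHJJ1}) and (\ref{rppHJJ2}). Throughout I use only that $\alpha_2$ is multiplicative with respect to $\intercal$, that $f$ intertwines the structures ($f\circ\alpha_1=\alpha_2\circ f$ and $f(a\cdot b)=f(a)\intercal f(b)$), and the defining left-skew-symmetry identity (\ref{HpJJi2}) of $\mathcal{A}_2$.

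For the $\rho$-part, condition (\ref{rHJJ1}) follows from $\alpha_2(\rho(x)b)=\alpha_2(f(x))\intercal\alpha_2(b)=f(\alpha_1(x))\intercal\alpha_2(b)=\rho(\alpha_1(x))\alpha_2(b)$. For (\ref{rHJJ2}), since $f$ is a morphism we have $f(x\star_1 y)=f(x)\intercal f(y)+f(y)\intercal f(x)=f(x)\star_2 f(y)$, so that $\rho(x\star_1 y)\alpha_2(b)=(f(x)\star_2 f(y))\intercal\alpha_2(b)$; applying (\ref{HpJJi2}) in $\mathcal{A}_2$ to the triple $(f(x),f(y),b)$ and using $\alpha_2\circ f=f\circ\alpha_1$ converts the right-hand side into exactly $-\rho(\alpha_1(x))\rho(y)b-\rho(\alpha_1(y))\rho(x)b$. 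This is the mechanism of Proposition \ref{PEx0}, with the Hom-Jacobi-Jordan identity there replaced by the pre-identity (\ref{HpJJi2}) here. Condition (\ref{rppHJJ1}) for $\lambda$ is equally immediate: $\alpha_2(\lambda(x)b)=\alpha_2(b)\intercal\alpha_2(f(x))=\alpha_2(b)\intercal f(\alpha_1(x))=\lambda(\alpha_1(x))\alpha_2(b)$.

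The substantive step is (\ref{rppHJJ2}). Evaluating each operator on an arbitrary $w\in A_2$ and writing $p=f(x)$, $q=f(y)$, the left-hand side becomes $(w\intercal p)\intercal\alpha_2(q)+\alpha_2(w)\intercal(p\intercal q)$, while the negative of the right-hand side becomes $(p\intercal w)\intercal\alpha_2(q)+\alpha_2(p)\intercal(w\intercal q)$, where I have repeatedly used $f\circ\alpha_1=\alpha_2\circ f$ and $f(x\cdot y)=p\intercal q$. Hence (\ref{rppHJJ2}) is equivalent to the vanishing of $\big[(w\intercal p)+(p\intercal w)\big]\intercal\alpha_2(q)+\alpha_2(w)\intercal(p\intercal q)+\alpha_2(p)\intercal(w\intercal q)$. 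Recognizing $(w\intercal p)+(p\intercal w)=w\star_2 p$, this expression is precisely the left Hom-pre-Jacobi-Jordan identity (\ref{HpJJi2}) of $\mathcal{A}_2$ evaluated at the triple $(w,p,q)$, and therefore vanishes.

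I expect the only obstacle to be organizational: keeping the four bracketed summands in (\ref{rppHJJ2}) correctly signed and positioned while distributing $\lambda$, $\rho$, $\alpha_2$ and the morphism $f$ through the products, and then spotting that the combination collapses to a single instance of (\ref{HpJJi2}). Once the terms are collected and $w\star_2 p$ is identified, no further structural input is needed, so the verification closes.
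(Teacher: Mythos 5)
Your proof is correct and follows essentially the same route as the paper's: the first three conditions by the intertwining relations $f\circ\alpha_1=\alpha_2\circ f$ and $f(x\cdot y)=f(x)\intercal f(y)$, and conditions (\ref{rHJJ2}) and (\ref{rppHJJ2}) by a single application of the defining identity of $\mathcal{A}_2$ (you invoke it in the form (\ref{HpJJi2}), the paper in the equivalent skew-symmetry form (\ref{HpJJi}), which is an immaterial difference). Your collected expression $\bigl[(w\intercal p)+(p\intercal w)\bigr]\intercal\alpha_2(q)+\alpha_2(w)\intercal(p\intercal q)+\alpha_2(p)\intercal(w\intercal q)$ is indeed exactly (\ref{HpJJi2}) at $(w,p,q)$, so the verification closes as you claim.
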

\begin{proof}  
Fist, it is clear that $\alpha_2\rho(x)=\rho(\alpha_1(x))\alpha_2$ and $\alpha_2\lambda(x)=\lambda(\alpha_1(x))\alpha_2$ for all $x\in A_1.$ Next, 
  set $x\star y:=x\cdot y+y\cdot x$ and 
 $b\circledast c=b\intercal c+c\intercal b$
  for all $x,y\in A_1$ and $b, c\in A_2.$ Then, one can show that $f(x\star y)=f(x)\circledast f(y).$ Hence, since $f$ is a morphism, we have:
  \begin{eqnarray}
  && \rho(x\star y)\alpha_2(b)=f(x\star y)\alpha_2(b)=
   (f(x)\circledast f(y))\intercal\alpha_2(b)\nonumber\\
   &&=(f(x)\intercal f(y))\intercal\alpha_2(b)+(f(x)\intercal f(y))\intercal\alpha_2(b)\nonumber\\
   &&=-\alpha_2f(x)\intercal(f(y)\intercal b)-\alpha_2f(y)\intercal(f(x)\intercal b) \mbox{ ( by  Condition \ref{HpJJi2} in $A_2$  ) }\nonumber\\
   &&=-\rho(\alpha_1(x))\rho(y)b-\rho(\alpha_1(y))\rho(x)b,\nonumber
  \end{eqnarray}
  i.e., (\ref{rHJJ2}) is satisfied. Finally, using again $f$ is a morphism, we obtain by straightforward computations
  \begin{eqnarray}
   &&\lambda(\alpha_1(y))\lambda(x)b+\lambda(x\cdot y)\alpha_2(b)\nonumber\\
   &&=(b\intercal f(x))\intercal\alpha_2f(x)+\alpha_2(b)\intercal (f(x)\intercal f(y))\nonumber\\
   &&=-(f(x)\intercal b)\intercal\alpha_2f(y)-\alpha_2f(x)\intercal (b\intercal f(y)) \mbox{ ( by  Condition \ref{HpJJi} in $A_2$  ) }    \nonumber\\
   &&-\lambda(\alpha_1(y))\rho(x)b-\rho(\alpha_1(x))\lambda(y)v. \nonumber
  \end{eqnarray}
Hence, we get also (\ref{rppHJJ2}) and the conclusion follows.
\end{proof}
Now, using Proposition \ref{PEx1}, we obtain the following examples as applications.
\begin{example}
\begin{enumerate}
 \item Let $(A,\cdot,\alpha)$ be a left Hom-pe-Jacobi-Jordan algebra. Define a left and right multiplications 
 $ L, R: A\rightarrow gl(A)$  by $L(x)y:=x\cdot y$ and $R(x)y:=y\cdot x$  for all $x, y \in A.$ Then $(A , L, R, \alpha)$ is a representation of $(A, \ast, \alpha),$  called a regular representation
 \item Let $(A,\cdot,\alpha)$ be a left Hom-pre-Jacobi-Jordan algebra and $(B,\alpha)$ be a  two-sided-Hom-ideal of $(A,\cdot,\alpha).$ Then $(B,\alpha)$  inherits a structure of representation of $(A,\cdot,\alpha)$ where 
 $\rho(a)b:=a\cdot b$  and $\lambda(a)(b)=b\cdot a$ for all $(a,b)\in A\times B.$
\end{enumerate}
\end{example}
Similarly, the following result can be proven.
\begin{proposition}\label{sRHas}
 Let $\mathcal{V}:=(V, \rho,\lambda,\phi)$ be a representation of a left Hom-pre-Jacobi-Jordan algebra $\mathcal{A}:=(A,\cdot,\alpha)$ and $\beta$ be a self-morphism of $\mathcal{A}$. Then 
 $\mathcal{V}_{\beta^n}=:(V,\rho_{\beta^n}:=\rho\beta^n,\lambda_{\beta^n}:=\lambda\beta^n, \phi)$ is a representation of $\mathcal{A}$  for each $n\in\mathbb{N}.$ In particular, 
 $\mathcal{V}_{\alpha^n}=:(V,\rho_{\alpha^n}:=\rho\alpha^n,\lambda_{\alpha^n}:=\lambda\alpha^n, \phi)$ is a representation of $\mathcal{A}$  for each $n\in\mathbb{N}.$
\end{proposition}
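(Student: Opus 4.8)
The plan is to verify that the twisted pair $(\rho\beta^{n},\lambda\beta^{n})$ still satisfies all the defining conditions of a representation of the left Hom-pre-Jacobi-Jordan algebra $(A,\cdot,\alpha)$ with respect to the \emph{unchanged} map $\phi$. By the definition of such a representation this amounts to three checks: that $\rho\beta^{n}$ is a representation of the sub-adjacent Hom-Jacobi-Jordan algebra $A^{C}=(A,\star,\alpha)$ with respect to $\phi$, and that $\lambda\beta^{n}$ satisfies (\ref{rppHJJ1}) and (\ref{rppHJJ2}). Throughout I will use the two facts about the self-morphism $\beta$ that do all the work: $\beta^{n}$ commutes with $\alpha$, so $\alpha\beta^{n}=\beta^{n}\alpha$, and $\beta^{n}$ is multiplicative for $\cdot$, so $\beta^{n}(x\cdot y)=\beta^{n}(x)\cdot\beta^{n}(y)$.

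For the $\rho$-part I would first observe that $\beta$ is automatically a self-morphism of the sub-adjacent algebra $A^{C}$: indeed $\beta(x\star y)=\beta(x\cdot y+y\cdot x)=\beta(x)\cdot\beta(y)+\beta(y)\cdot\beta(x)=\beta(x)\star\beta(y)$, and $\beta$ already commutes with $\alpha$. Since $\rho$ is, by hypothesis, a representation of the Hom-Jacobi-Jordan algebra $A^{C}$, the corresponding twisting result already established for representations of Hom-Jacobi-Jordan algebras applies verbatim and yields that $\rho\beta^{n}$ is again a representation of $A^{C}$ with respect to $\phi$. This disposes of conditions (\ref{rHJJ1}) and (\ref{rHJJ2}) for the $\rho$-component without any new computation.

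It remains to treat $\lambda\beta^{n}$. Condition (\ref{rppHJJ1}) is immediate: applying the hypothesis $\phi\lambda(z)=\lambda(\alpha(z))\phi$ at $z=\beta^{n}(x)$ gives $\phi\,\lambda\beta^{n}(x)=\lambda(\alpha\beta^{n}(x))\phi=\lambda(\beta^{n}\alpha(x))\phi$, which is exactly (\ref{rppHJJ1}) for $\lambda\beta^{n}$. For the main identity (\ref{rppHJJ2}) I would substitute $x\mapsto\beta^{n}(x)$ and $y\mapsto\beta^{n}(y)$ into the hypothesis (\ref{rppHJJ2}) for $\lambda$, and then rewrite each term using the two structural facts above, namely $\lambda(\alpha\beta^{n}(y))=\lambda\beta^{n}(\alpha(y))$, $\lambda(\beta^{n}(x)\cdot\beta^{n}(y))=\lambda\beta^{n}(x\cdot y)$, and $\rho(\alpha\beta^{n}(x))=\rho\beta^{n}(\alpha(x))$. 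After these rewritings the left- and right-hand sides become precisely the two sides of (\ref{rppHJJ2}) for the twisted pair $(\rho\beta^{n},\lambda\beta^{n})$.

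The verification is entirely formal, so there is no serious obstacle; the only point that requires care is to keep $\phi$ fixed and to make sure that the occurrences of $\alpha$, and the copy of $\phi$ sitting on the right-hand side of (\ref{rppHJJ2}), still match up after twisting. This is exactly where the commutativity $\alpha\beta^{n}=\beta^{n}\alpha$ and the multiplicativity of $\beta^{n}$ are needed, and it is why twisting by $\beta^{n}$ does not disturb $\phi$. Finally, the displayed special case follows by taking $\beta=\alpha$, which is a self-morphism of $\mathcal{A}$ by the multiplicativity of $\alpha$ with respect to $\cdot$, giving that $(V,\rho\alpha^{n},\lambda\alpha^{n},\phi)$ is a representation of $\mathcal{A}$ for every $n\in\mathbb{N}$.
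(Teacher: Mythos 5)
Your proof is correct. The paper states this proposition without proof (``Similarly, the following result can be proven''), and your argument is exactly the intended routine verification: you use that $\beta^{n}$ commutes with $\alpha$ and is multiplicative for $\cdot$ (hence also for $\star$, so the already-established Hom-Jacobi-Jordan twisting result handles the $\rho$-part), and you check (\ref{rppHJJ1}) and (\ref{rppHJJ2}) for $\lambda\beta^{n}$ by substituting $\beta^{n}(x),\beta^{n}(y)$ into the hypotheses, all of which goes through as you describe.
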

\begin{corollary}
 Let $(A,\cdot,\alpha)$ be a left Hom-pre-Jacobi-Jordan algebra. For any integer $n\in\mathbb{N},$ define $L^n, R^n: A\rightarrow gl(A)$ by 
 \begin{eqnarray}
  L^n(x)y=\alpha^n(x)\cdot y,R^n(x)y=y\cdot\alpha^n(x) \mbox{ for all $x,y\in A.$}\nonumber
 \end{eqnarray}
Then $(A, L^n, R^n,\alpha)$ is a representation of the left Hom-pre-Jacobi-Jordan algebra 
$(A,\ast,\alpha).$
\end{corollary}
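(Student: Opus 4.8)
The plan is to obtain this as an immediate specialization of Proposition~\ref{sRHas}. Recall from the preceding Example that the regular representation $(A,L,R,\alpha)$, with $L(x)y=x\cdot y$ and $R(x)y=y\cdot x$, is a representation of the left Hom-pre-Jacobi-Jordan algebra $(A,\cdot,\alpha)$. Since $(A,\cdot,\alpha)$ is multiplicative, the structure map $\alpha$ satisfies $\alpha\circ\cdot=\cdot\circ\alpha^{\otimes 2}$ and commutes with itself, so $\alpha$ is a self-morphism of $(A,\cdot,\alpha)$; this is exactly the hypothesis needed to feed $\beta=\alpha$ into Proposition~\ref{sRHas}.

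First I would apply the \emph{in particular} clause of Proposition~\ref{sRHas} to the representation $\mathcal{V}=(A,L,R,\alpha)$ with $\beta=\alpha$. This yields directly that $\mathcal{V}_{\alpha^n}=(A,L_{\alpha^n}:=L\alpha^n,\,R_{\alpha^n}:=R\alpha^n,\alpha)$ is again a representation of $(A,\cdot,\alpha)$ for every $n\in\mathbb{N}$, so no fresh verification of Conditions (\ref{rHJJ1}), (\ref{rHJJ2}), (\ref{rppHJJ1}) and (\ref{rppHJJ2}) is required.

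It then remains only to match notation. Unwinding the definitions, for all $x,y\in A$ one has $(L\alpha^n)(x)y=L(\alpha^n(x))y=\alpha^n(x)\cdot y=L^n(x)y$, and likewise $(R\alpha^n)(x)y=R(\alpha^n(x))y=y\cdot\alpha^n(x)=R^n(x)y$. Hence $L_{\alpha^n}=L^n$ and $R_{\alpha^n}=R^n$, and the representation produced by the Proposition is precisely $(A,L^n,R^n,\alpha)$, as claimed.

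There is essentially no hard step here: the corollary is a bookkeeping consequence of Proposition~\ref{sRHas}. The only points deserving a moment's care are confirming that the structure map $\alpha$ qualifies as a self-morphism, which is immediate from multiplicativity, and the identifications $L\alpha^n=L^n$, $R\alpha^n=R^n$ that make the twisted maps coincide with the ones in the statement. Alternatively, one could bypass the Proposition and check the four defining conditions of a representation directly for the pair $(L^n,R^n)$, using the multiplicativity of $\alpha$ together with the identities (\ref{rHJJ1})--(\ref{rppHJJ2}) for the regular representation, but invoking Proposition~\ref{sRHas} is the cleaner route.
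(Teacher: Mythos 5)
Your proof is correct and is exactly the argument the paper intends: the corollary is stated as an immediate consequence of Proposition~\ref{sRHas} applied to the regular representation $(A,L,R,\alpha)$ with $\beta=\alpha$, and your notational identification $L\alpha^n=L^n$, $R\alpha^n=R^n$ is all that is needed.
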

\begin{proposition}\label{spHpJJ}
 Let $(A,\cdot, \alpha)$ be a left Hom-pre-Jacobi-Jordan algebra, $V$ be a vector space, $\rho,\lambda: A\rightarrow gl(V)$
be linear maps and $\phi\in gl(V).$ Then $(V, \rho,\lambda,\phi)$ is a representation of $A$ if and only if the direct sum
 $A\oplus V$ (as vector space) turns into a left Hom-pre-Jacobi-Jordan algebra (the semidirect sum) by defining the
multiplication in $A\oplus V$ as
\begin{eqnarray}
 (x+u)\diamond (y+v):=x\cdot y+(\rho(x)v+\lambda(y)u) \mbox{ for all $x,y\in A$ 
 and $u,v\in V.$}
\end{eqnarray}
We denote it by $A\ltimes V.$
\end{proposition}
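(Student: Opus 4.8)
The plan is to follow the pattern of the proof of Proposition~\ref{spHJJ}: verify that $(A\oplus V,\diamond,\alpha\oplus\phi)$ is a multiplicative Hom-algebra and that its anti-Hom-associator is left skew-symmetric, i.e.\ that identity~(\ref{HpJJi2}) holds, while tracking separately the $A$-valued and $V$-valued parts of each identity and reading every equivalence in both directions.

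First I would settle multiplicativity. Expanding $(\alpha\oplus\phi)\big((x+u)\diamond(y+v)\big)$ and $\big((\alpha\oplus\phi)(x+u)\big)\diamond\big((\alpha\oplus\phi)(y+v)\big)$ and comparing the $A$- and $V$-components shows that $\diamond$ is multiplicative with respect to $\alpha\oplus\phi$ precisely when $\cdot$ is multiplicative with respect to $\alpha$ (which is assumed) and the equivariance conditions~(\ref{rHJJ1}) for $\rho$ and~(\ref{rppHJJ1}) for $\lambda$ hold. Thus the multiplicativity of $A\oplus V$ is exactly the equivariance half of the statement that $(V,\rho,\lambda,\phi)$ is a representation.

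The heart of the argument is identity~(\ref{HpJJi2}) for $\diamond$, namely $(X\star Y)\diamond(\alpha\oplus\phi)(Z)=-(\alpha\oplus\phi)(X)\diamond(Y\diamond Z)-(\alpha\oplus\phi)(Y)\diamond(X\diamond Z)$ with $X=x+u$, $Y=y+v$, $Z=z+w$ and $X\star Y=X\diamond Y+Y\diamond X$. I would expand both sides, using~(\ref{rHJJ1}) and~(\ref{rppHJJ1}) to push $\alpha$ and $\phi$ inside, and split off the $A$-component, which is nothing but identity~(\ref{HpJJi2}) for $(A,\cdot,\alpha)$ and so holds by hypothesis.

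The decisive and most delicate step is the bookkeeping of the $V$-component, which I would organize by collecting the terms according to which of the free vectors $u$, $v$, $w$ they carry. The terms proportional to $w$ assemble into $\rho(x\star y)\phi(w)+\rho(\alpha(x))\rho(y)w+\rho(\alpha(y))\rho(x)w=0$, which is exactly~(\ref{rHJJ2}), i.e.\ the requirement that $\rho$ be a representation of the subadjacent algebra $A^C$; the terms proportional to $v$ collapse to $\lambda(\alpha(z))\lambda(x)+\lambda(x\cdot z)\phi+\lambda(\alpha(z))\rho(x)+\rho(\alpha(x))\lambda(z)=0$, which is~(\ref{rppHJJ2}) after the relabeling $z\mapsto y$; and the terms proportional to $u$ reproduce the very same relation~(\ref{rppHJJ2}) under the relabeling $x\mapsto y,\ y\mapsto z$, so they impose no new constraint. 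Reading these equivalences forward and backward then yields that $A\oplus V$ is a left Hom-pre-Jacobi-Jordan algebra if and only if $\rho$ satisfies~(\ref{rHJJ1})--(\ref{rHJJ2}) and $\lambda$ satisfies~(\ref{rppHJJ1})--(\ref{rppHJJ2}), i.e.\ if and only if $(V,\rho,\lambda,\phi)$ is a representation of $(A,\cdot,\alpha)$.
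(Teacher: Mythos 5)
Your proposal is correct and follows essentially the same route as the paper's proof: both reduce multiplicativity of $\diamond$ to the equivariance conditions and then expand the skew-symmetrized anti-Hom-associator of $A\oplus V$, splitting off the $A$-component and grouping the $V$-component by the free vectors $u,v,w$ to recover (\ref{rHJJ2}) and (\ref{rppHJJ2}) (the latter twice, up to relabeling). The only cosmetic difference is that you work with the equivalent form (\ref{HpJJi2}) while the paper computes $as^t_{\alpha\oplus\phi}(X,Y,Z)+as^t_{\alpha\oplus\phi}(Y,X,Z)$ directly.
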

\begin{proof}
 The multiplicativity of $\diamond$ with respect to $\alpha\oplus\phi$ is equivalent to conditions (\ref{rppHJJ1}), (\ref{rppHJJ1}) and the multiplicativity of $\cdot$ with respect to $\alpha.$ Next, for all $x,y,z\in A,\ u,v,w\in V,$ we have by straightforward computations
 \begin{eqnarray}
  &&as_{\alpha\oplus\phi}^t(x+u,y+v,z+w)=
  (x\cdot y)\cdot\alpha(z)+\rho(x\cdot y)\phi(w)+\lambda(\alpha(z))\rho(x)v+\lambda(\alpha(z))\lambda(y)u\nonumber\\
  &&+\alpha(x)\cdot(y\cdot z)+\rho(\alpha(x))\rho(y)w+\rho(\alpha(x))\lambda(z)v+\lambda(y\cdot z)\phi(u)=as_{\alpha}^t(x,y,z)+\Big(\rho(x\cdot y)\phi(w)\nonumber\\
  &&+\rho(\alpha(x))\rho(y)w \Big)+\Big(\lambda(\alpha(z))\rho(x)v+\rho(\alpha(x))\lambda(z)v \Big)+
  \Big( \lambda(\alpha(z))\lambda(y)u+\lambda(y\cdot z)\phi(u)\Big)\nonumber
 \end{eqnarray}
Switching $x+u$ and $y+v$ in the expression above of  $as_{\alpha\oplus\phi}^t(x+u,y+v,z+w),$ we get
\begin{eqnarray}
 &&as_{\alpha\oplus\phi}^t(y+v,x+u,z+w)=
 as_{\alpha}^t(x,y,z)+\Big(\rho(x\cdot y)\phi(w)+\rho(\alpha(x))\rho(y)w \Big)+\Big(\lambda(\alpha(z))\rho(x)v\nonumber\\&&+\rho(\alpha(x))\lambda(z)v \Big)+
  \Big( \lambda(\alpha(z))\lambda(y)u+\lambda(y\cdot z)\phi(u)\Big).\nonumber
\end{eqnarray}
Hence, (\ref{HpJJi}) holds for $A\oplus V$ if and only if (\ref{rHJJ2}) (with $\ast=\star$) and (\ref{rppHJJ2}) hold.
\end{proof}
\begin{proposition}\label{sumrl}
 Let $(V, \rho,\lambda,\phi)$ is a representation of a left Hom-pre-Jacobi-Jordan algebra $(A,\cdot, \alpha)$ and $(A,\star, \alpha)$
be its associated Hom-Jacobi-Jordan algebra. Then
 $(V,\rho+\lambda,\phi)$ is a representation of $(A,\star,\alpha),$
\end{proposition}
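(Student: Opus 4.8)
The plan is to verify directly that $\psi:=\rho+\lambda$ satisfies the two defining conditions (\ref{rHJJ1}) and (\ref{rHJJ2}) of a representation of the Hom-Jacobi-Jordan algebra $(A,\star,\alpha)$. Since the statement is purely additive in $\rho$ and $\lambda$, I expect no genuine difficulty; the whole argument is a careful bookkeeping of the product terms, and the hypotheses are designed precisely so that everything cancels.

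First I would dispose of the multiplicativity-type condition (\ref{rHJJ1}) for $\psi$. By linearity, $\phi\psi(x)=\phi\rho(x)+\phi\lambda(x)$, and applying (\ref{rHJJ1}) for $\rho$ (which holds because $\rho$ is a representation of $A^C$) together with (\ref{rppHJJ1}) for $\lambda$ gives $\phi\rho(x)+\phi\lambda(x)=\rho(\alpha(x))\phi+\lambda(\alpha(x))\phi=\psi(\alpha(x))\phi$. This is immediate.

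The substantive step is (\ref{rHJJ2}) for $\psi$. Writing $x\star y=x\cdot y+y\cdot x$, I would expand the left-hand side as
\[
\psi(x\star y)\phi=\rho(x\star y)\phi+\lambda(x\cdot y)\phi+\lambda(y\cdot x)\phi,
\]
and expand the right-hand side $-\psi(\alpha(x))\psi(y)-\psi(\alpha(y))\psi(x)$ into its eight bilinear terms in $\rho,\lambda$. The two pure $\rho$-terms, $-\rho(\alpha(x))\rho(y)-\rho(\alpha(y))\rho(x)$, are exactly $\rho(x\star y)\phi$ by (\ref{rHJJ2}) applied to the representation $\rho$ of $A^C$. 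For the remaining six mixed and pure-$\lambda$ terms, I would invoke (\ref{rppHJJ2}) to rewrite $\lambda(x\cdot y)\phi=-\lambda(\alpha(y))\lambda(x)-\lambda(\alpha(y))\rho(x)-\rho(\alpha(x))\lambda(y)$, and its $x\leftrightarrow y$ swap to rewrite $\lambda(y\cdot x)\phi=-\lambda(\alpha(x))\lambda(y)-\lambda(\alpha(x))\rho(y)-\rho(\alpha(y))\lambda(x)$.

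Summing these two expressions reproduces precisely the six leftover terms on the right-hand side, namely $-\rho(\alpha(x))\lambda(y)-\lambda(\alpha(x))\rho(y)-\lambda(\alpha(x))\lambda(y)-\rho(\alpha(y))\lambda(x)-\lambda(\alpha(y))\rho(x)-\lambda(\alpha(y))\lambda(x)$, so the identity (\ref{rHJJ2}) holds for $\psi$. The only place demanding care is matching the six terms in the correct $x\leftrightarrow y$ arrangement; there is no conceptual obstacle, merely the need to use both (\ref{rppHJJ2}) and its swapped form so that the pure-$\lambda$ products $\lambda(\alpha(x))\lambda(y)$ and $\lambda(\alpha(y))\lambda(x)$ appear symmetrically. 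With both conditions verified, $(V,\rho+\lambda,\phi)$ is a representation of $(A,\star,\alpha)$, completing the proof.
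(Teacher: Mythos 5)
Your proof is correct, but it follows a genuinely different route from the paper. You verify the two defining conditions for $\rho+\lambda$ head-on: condition (\ref{rHJJ1}) is immediate by adding (\ref{rHJJ1}) for $\rho$ and (\ref{rppHJJ1}) for $\lambda$, and for (\ref{rHJJ2}) you expand the eight bilinear terms of $-(\rho+\lambda)(\alpha(x))(\rho+\lambda)(y)-(\rho+\lambda)(\alpha(y))(\rho+\lambda)(x)$, absorb the two pure $\rho$-terms via (\ref{rHJJ2}) for $\rho$ on $A^C$, and account for the remaining six via (\ref{rppHJJ2}) applied to both $\lambda(x\cdot y)\phi$ and its $x\leftrightarrow y$ swap $\lambda(y\cdot x)\phi$; I checked the term matching and it is exact. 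The paper instead argues structurally: by Proposition \ref{spHpJJ} the semidirect sum $A\ltimes V$ is a left Hom-pre-Jacobi-Jordan algebra, its sub-adjacent Hom-Jacobi-Jordan algebra (Proposition \ref{lpHJJHJJ}) has product $(x+u)\tilde{\diamond}(y+v)=x\star y+\bigl((\rho+\lambda)(x)v+(\rho+\lambda)(y)u\bigr)$, and this is recognized as the semidirect product of $A^C$ with $(V,\rho+\lambda,\phi)$, so Proposition \ref{spHJJ} yields the conclusion. Your approach buys a self-contained, elementary verification that does not depend on the earlier semidirect-product machinery; the paper's buys brevity and makes visible that the passage to the sub-adjacent algebra commutes with forming semidirect products. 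Both are valid proofs of the same statement.
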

\begin{proof}
 By Proposition \ref{spHpJJ}, $A\ltimes V$
is a left Hom-pre-Jacobi-Jordan algebra. Consider its associated Hom-Jacobi-Jordan algebra $(A\oplus V,\tilde{\diamond}, \alpha+\phi),$
we have
\begin{eqnarray}
 &&(x+u)\tilde{\diamond} (y+v)=(x+u)\diamond (y+v)+
 (y+v)\diamond (x+u)\nonumber\\
 &&=x\cdot y+(\rho(x)v+\lambda(y)u)+
 y\cdot x+(\rho(y)u+\lambda(x)v)\nonumber\\
 &&=x\star y+\Big((\rho+\lambda)(x)v+(\rho+\lambda)(y)u \Big)\nonumber
\end{eqnarray}
Thanks to Proposition \ref{spHJJ}, we deduce that $(V,\rho+\lambda,\phi)$ is a representation of $(A,\ast,\alpha).$
\end{proof}
As Hom-Jacobi-Jordan algebras case,
let $(A, \cdot,\alpha)$ be a regular left Hom-pre-Jacobi-Jordan algebra and $(V, \rho,\lambda,\phi)$ be a representation of  $(A, \cdot,\alpha)$ such that $\phi$ is invertible. Define 
$\rho^{\ast}, \lambda^{\ast}: A\rightarrow gl(V^{\ast})$
by $\langle\rho^{\ast}(x)(\xi), u\rangle=\langle \xi,\rho(x)u\rangle,$ and 
$\langle\lambda^{\ast}(x)(\xi), u\rangle=\langle \xi,\lambda(x)u\rangle,$
$\forall x\in A,\ u\in v, \xi\in V^{\ast}.$
 Define 
$\rho^{\ast}, \lambda^{\ast}: A\rightarrow gl(V^{\ast})$
by
\begin{eqnarray}
 &&\rho^{\ast}(x)(\xi):=\rho^{\ast}(\alpha(x))((\phi)^{-2})^{\ast}(\xi)) \mbox{ for all $x\in A, \xi\in V^{\ast}.$}
 \label{drpHJJ1}\\
&& \lambda^{\ast}(x)(\xi):=\lambda^{\ast}(\alpha(x))((\phi)^{-2})^{\ast}(\xi)) \mbox{ for all $x\in A, \xi\in V^{\ast}.$}
 \label{drpHJJ2}
\end{eqnarray}
Now, we can prove 
\begin{theorem}\label{drplHJJ}
 Let $(V,\rho,\lambda, \phi)$ be a representation of a regular left Hom-pre-Jacobi-Jordan algebra 
 $(A, \cdot ,\alpha)$ such that $\phi$ is invertible. Then $(V, \rho^{\ast}+\lambda^{\ast}, -\lambda^{\ast},(\phi^{-1})^{\ast})$ is a representation of $(A, \cdot ,\alpha)$  
called the dual representation of $(A, \cdot ,\alpha).$
\end{theorem}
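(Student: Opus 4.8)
The plan is to read off exactly what must be checked for the triple $(V^{\ast},\rho^{\ast}+\lambda^{\ast},-\lambda^{\ast},(\phi^{-1})^{\ast})$ to be a representation of $(A,\cdot,\alpha)$ in the sense of the definition preceding the theorem, and then to dispatch the two halves separately. The definition requires, first, that $\rho^{\ast}+\lambda^{\ast}$ be a representation of the sub-adjacent Hom-Jacobi-Jordan algebra $A^{C}=(A,\star,\alpha)$ on $V^{\ast}$ with respect to $(\phi^{-1})^{\ast}$ (that is, it satisfies (\ref{rHJJ1}) and (\ref{rHJJ2})); and second, that $-\lambda^{\ast}$ satisfy (\ref{rppHJJ1}) and (\ref{rppHJJ2}) relative to the data $(\rho^{\ast}+\lambda^{\ast},-\lambda^{\ast},(\phi^{-1})^{\ast})$. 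Throughout I write $\tilde{\rho}(x),\tilde{\lambda}(x)\in gl(V^{\ast})$ for the bare transposes, $\langle\tilde{\rho}(x)\xi,u\rangle=\langle\xi,\rho(x)u\rangle$ and likewise for $\lambda$, so that by (\ref{drpHJJ1})--(\ref{drpHJJ2}) one has $\rho^{\ast}(x)=\tilde{\rho}(\alpha(x))(\phi^{-2})^{\ast}$ and $\lambda^{\ast}(x)=\tilde{\lambda}(\alpha(x))(\phi^{-2})^{\ast}$.

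For the first half I would invoke Proposition \ref{sumrl}: since $(V,\rho,\lambda,\phi)$ is a representation of $(A,\cdot,\alpha)$, the pair $(V,\rho+\lambda,\phi)$ is a representation of the associated Hom-Jacobi-Jordan algebra $(A,\star,\alpha)$. As $(A,\cdot,\alpha)$ is regular and $\phi$ is invertible, so is $(A,\star,\alpha)$, and Theorem \ref{repdual} applies to $(V,\rho+\lambda,\phi)$; its dual, built by the recipe (\ref{dualr}), is a representation of $(A,\star,\alpha)$ on $V^{\ast}$ with respect to $(\phi^{-1})^{\ast}$. Since transposition is additive and the assignment in (\ref{drpHJJ1})--(\ref{drpHJJ2}) is linear in the transpose, $(\rho+\lambda)^{\ast}=\rho^{\ast}+\lambda^{\ast}$; hence $\rho^{\ast}+\lambda^{\ast}$ is exactly the required representation of $A^{C}$, and both (\ref{rHJJ1}) and (\ref{rHJJ2}) hold for it with no further work.

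It remains to treat $-\lambda^{\ast}$. Condition (\ref{rppHJJ1}), namely $(\phi^{-1})^{\ast}\lambda^{\ast}(x)=\lambda^{\ast}(\alpha(x))(\phi^{-1})^{\ast}$, is handled just as the first displayed identity in the proof of Theorem \ref{repdual}: transposing (\ref{rppHJJ1}) gives $\tilde{\lambda}(x)\phi^{\ast}=\phi^{\ast}\tilde{\lambda}(\alpha(x))$, conjugating by $(\phi^{-1})^{\ast}$ yields $(\phi^{-1})^{\ast}\tilde{\lambda}(z)=\tilde{\lambda}(\alpha(z))(\phi^{-1})^{\ast}$, and substituting the definition (\ref{drpHJJ2}) produces the claim. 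The substance of the proof is condition (\ref{rppHJJ2}) for the triple $(\rho^{\ast}+\lambda^{\ast},-\lambda^{\ast},(\phi^{-1})^{\ast})$. After replacing $\rho\mapsto\rho^{\ast}+\lambda^{\ast}$, $\lambda\mapsto-\lambda^{\ast}$, $\phi\mapsto(\phi^{-1})^{\ast}$ and cancelling the common term $\lambda^{\ast}(\alpha(y))\lambda^{\ast}(x)$, the identity to be shown collapses to
\[
 \lambda^{\ast}(\alpha(x))\lambda^{\ast}(y)+\lambda^{\ast}(x\cdot y)(\phi^{-1})^{\ast}=-\lambda^{\ast}(\alpha(y))\rho^{\ast}(x)-\rho^{\ast}(\alpha(x))\lambda^{\ast}(y),
\]
the natural dual of the original (\ref{rppHJJ2}).

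I would verify this last identity by pairing both sides against $\xi\in V^{\ast}$ and $u\in V$, unfolding $\rho^{\ast},\lambda^{\ast}$ through (\ref{drpHJJ1})--(\ref{drpHJJ2}) and the defining pairing, transporting everything onto $V$ (which reverses the order of each operator product), and then applying the original mixed relation (\ref{rppHJJ2}) to rewrite $\lambda(\alpha(x)\cdot\alpha(y))$. The surplus factor $\phi^{-1}$ created by solving (\ref{rppHJJ2}) for $\lambda(x\cdot y)$ is migrated across the pairing using the intertwinings (\ref{rppHJJ1}) and (\ref{rHJJ1}), which simultaneously realign all the $\alpha$-powers, and the result is then repackaged into $\lambda^{\ast}$ and $\rho^{\ast}$. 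The main obstacle is precisely this bookkeeping: as in the second half of the proof of Theorem \ref{repdual}, the two sides a priori carry different naive powers of $(\phi^{-1})^{\ast}$, and it is only the twist $\phi$ present in the defining identity (\ref{rppHJJ2}) --- converted into the missing $(\phi^{-1})^{\ast}$ --- that makes the degrees in $\alpha$ and in $(\phi^{-1})^{\ast}$ agree on both sides. Once these powers are tracked correctly the equality is forced, completing the verification that $(V^{\ast},\rho^{\ast}+\lambda^{\ast},-\lambda^{\ast},(\phi^{-1})^{\ast})$ is a representation of $(A,\cdot,\alpha)$.
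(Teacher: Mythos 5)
Your proposal is correct and follows essentially the same route as the paper: both handle $\rho^{\ast}+\lambda^{\ast}$ by combining Proposition \ref{sumrl} with Theorem \ref{repdual}, both check the intertwining condition for $-\lambda^{\ast}$ by the same two-step conjugation, and both reduce the remaining axiom to the dual form of (\ref{rppHJJ2}) verified by pairing against $\xi$ and $u$ and using the original (\ref{rppHJJ2}) together with the $\phi$-intertwining relations to realign the powers of $\alpha$ and $(\phi^{-1})^{\ast}$. The only difference is that you sketch the final pairing computation rather than writing it out, but the sketch matches the paper's displayed calculation step for step.
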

\begin{proof}
First, since $(V,\rho,\lambda, \phi)$ is a representation of a left Hom-pre-Jacobi-Jordan algebra $(A, \cdot ,\alpha),$ thanks to Proposition \ref{sumrl},
we get that $(V,\rho+\lambda, \phi)$ is a representation of the sub-adjacent Hom-Jacobi-Jordan $A^C=(A,\star, \alpha).$ Hence,
by Theorem \ref{repdual}, we deduce that
$(V^{\ast},(\rho+\lambda)^{\ast}=\rho^{\ast}+\lambda^{\ast}, (\phi^{-1})^{\ast})$ is a representation of the Hom-Jacobi-Jordan algebra $A^C.$
Next, by (\ref{rppHJJ1}) and (\ref{drpHJJ2}), we get for all $x\in A, \xi\in V^{\ast},$
 \begin{eqnarray}
  -\lambda^{\ast}(\alpha(x))((\phi^{-1})^{\ast}(\xi))=-\lambda^{\ast}(\alpha^2(x))((\phi^{-3})^{\ast}(\xi))=
  (\phi^{-1})^{\ast}(-\lambda^{\ast}(\alpha(x))(\phi^{-2})^{\ast}(\xi))=
  (\phi^{-1})^{\ast}(-\lambda^{\ast}(x)(\xi))\nonumber
 \end{eqnarray}
which implies $-\lambda^{\ast}(\alpha(x))\circ(\phi^{-1})^{\ast}=(\phi^{-1})^{\ast}\circ(-\lambda^{\ast}(x)).$\\

Finally, using (\ref{drpHJJ1} and \ref{drpHJJ2}), we compute for all $x, y \in A,$ $\xi\in V^{\ast}$ and 
$u\in V:$
\begin{eqnarray}
 &&\langle\lambda^{\ast}(x\cdot y)(\phi^{-1})^{\ast}(\xi),u\rangle=\langle\lambda^{\ast}(\alpha(x)\cdot\alpha(y))(\phi^{-3})^{\ast}(\xi),u\rangle=\langle
 (\phi^{-3})^{\ast}(\xi),\lambda(\alpha(x)\cdot\alpha(y))\rangle\nonumber\\
 &&= \langle(\phi^{-3})^{\ast}(\xi), -\lambda(\alpha^2(y))\lambda(\alpha(x))\phi^{-1}(u) -\lambda(\alpha^2(y))\rho(\alpha(x))\phi^{-1}(u)-\rho(\alpha^2(x))\lambda(\alpha(y))\phi^{-1}(u)\rangle \mbox{ ( by (\ref{rppHJJ2}) )}\nonumber\\
 &&= \langle(\phi^{-4})^{\ast}(\xi), -\lambda(\alpha^3(y))\lambda(\alpha^2(x))u -\lambda(\alpha^3(y))\rho(\alpha^2(x))u-\rho(\alpha^3(x))\lambda(\alpha^2(y))u\rangle\nonumber\\
 &&=\langle-\lambda^{\ast}(\alpha^2(x))\lambda^{\ast}(\alpha^3(y))(\phi^{-4})^{\ast}(\xi)-\rho^{\ast}(\alpha^2(x))\lambda^{\ast}(\alpha^3(y))(\phi^{-4})^{\ast}(\xi)-\lambda^{\ast}(\alpha^2(y))\rho^{\ast}(\alpha^3(x))(\phi^{-4})^{\ast}(\xi),u
 \rangle\nonumber\\
 &&=\langle-\lambda^{\ast}(\alpha(x))\lambda^{\ast}(y)(\xi)-\rho^{\ast}(\alpha(x))\lambda^{\ast}(y)(\xi)-\lambda^{\ast}(\alpha^2(y))\rho^{\ast}(x)(\xi),u\rangle\nonumber
\end{eqnarray}
Hence, $\lambda^{\ast}(\alpha(y))\lambda^{\ast}(x)-\lambda^{\ast}(x\cdot y)(\phi^{-1})^{\ast}=\lambda^{\ast}(\alpha(y))(\rho^{\ast}+\lambda^{\ast})(x)+(\rho^{\ast}+\lambda^{\ast})(\alpha(x))\lambda^{\ast}(y).$ This ends the proof.
\end{proof}
\begin{corollary}
 Let $(A, \cdot ,\alpha)$ be a regular left Hom-pre-Jacobi-Jordan algebra. Then $(A,L^{\ast}+R^{\ast},-R^{\ast},(\alpha^{-1})^{\ast})$  is a representation of $(A, \cdot ,\alpha)$ where $L^{\ast}, R^{\ast}: A\rightarrow gl(A^{\ast})$ are defined by 
 \begin{eqnarray}
 && L_x^{\ast}\xi:=L_{\alpha(x)}^{\ast}(\alpha^{-2})^{\ast}(\xi) \mbox{  for all $x\in A, \xi\in A^{\ast},$ }\nonumber\\
  &&R_x^{\ast}\xi:=R_{\alpha(x)}^{\ast}(\alpha^{-2})^{\ast}(\xi) \mbox{  for all $x\in A, \xi\in A^{\ast}$ }\nonumber
 \end{eqnarray}
This representation  is called the
coadjoint representation.
\end{corollary}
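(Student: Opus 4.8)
The plan is to obtain this corollary as a direct specialization of Theorem \ref{drplHJJ}, taking $V = A$ together with the regular representation. Recall from the Example following Proposition \ref{PEx1} that for a left Hom-pre-Jacobi-Jordan algebra $(A,\cdot,\alpha)$ the quadruple $(A, L, R, \alpha)$, with $L(x)y = x\cdot y$ and $R(x)y = y\cdot x$, is a representation of $(A,\cdot,\alpha)$ on $A$ with respect to $\alpha$. Since $(A,\cdot,\alpha)$ is assumed regular, the twisting map $\alpha$ is invertible, so this representation satisfies the invertibility hypothesis ``$\phi$ invertible'' of Theorem \ref{drplHJJ}.

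First I would set $\rho := L$, $\lambda := R$ and $\phi := \alpha$ in Theorem \ref{drplHJJ}. The dual maps $\rho^{\ast}, \lambda^{\ast}$ produced by (\ref{drpHJJ1}) and (\ref{drpHJJ2}) then become exactly the operators $L^{\ast}$ and $R^{\ast}$ defined in the statement, namely $L_x^{\ast}\xi = L_{\alpha(x)}^{\ast}(\alpha^{-2})^{\ast}(\xi)$ and $R_x^{\ast}\xi = R_{\alpha(x)}^{\ast}(\alpha^{-2})^{\ast}(\xi)$, since $\phi^{-2} = \alpha^{-2}$. Likewise $(\phi^{-1})^{\ast}$ becomes $(\alpha^{-1})^{\ast}$, so the twist of the dual representation is the anticipated $(\alpha^{-1})^{\ast}$.

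Applying Theorem \ref{drplHJJ} then yields at once that $(A^{\ast}, \rho^{\ast}+\lambda^{\ast}, -\lambda^{\ast}, (\phi^{-1})^{\ast}) = (A^{\ast}, L^{\ast}+R^{\ast}, -R^{\ast}, (\alpha^{-1})^{\ast})$ is a representation of $(A,\cdot,\alpha)$, which is precisely the asserted coadjoint representation. There is essentially no obstacle here: the only points to verify are the two prerequisites of the theorem, namely that the regular quadruple $(A, L, R, \alpha)$ is genuinely a representation (already established in the Example) and that $\alpha$ is invertible (this being the meaning of ``regular''). No further computation is required, because the general dual-representation formulas of Theorem \ref{drplHJJ} specialize verbatim to the coadjoint formulas under the substitution $(\rho,\lambda,\phi) \mapsto (L, R, \alpha)$.
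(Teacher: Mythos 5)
Your proposal is correct and coincides with the paper's intended argument: the corollary is stated immediately after Theorem \ref{drplHJJ} precisely as its specialization to the regular representation $(A,L,R,\alpha)$, with $\phi=\alpha$ invertible by regularity. Nothing further is needed.
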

also, we get:
\begin{corollary}
 Let $(A, \cdot ,\alpha)$ be a regular left Hom-pre-Jacobi-Jordan algebra. Then there is a natural left Hom-pre-Jacobi-Jordan algebra 
 $(A\oplus A^{\ast},\diamond, \alpha\oplus(\alpha^{-1})^{\ast}),$ where the product $\diamond$  is given by 
 \begin{eqnarray}
  &&(x_1+\xi_1)\diamond (x_2+\xi_2):=
  x_1\ast x_2+L_{x_1}^{\ast}\xi_2+ 
  R_{x_1}^{\ast}\xi_2 
  -R_{x_2}^{\ast}\xi_1
  \mbox{  for all $x_1,x_2\in A, \xi_1,\xi_2\in A^{\ast}.$ }\nonumber
 \end{eqnarray}
\end{corollary}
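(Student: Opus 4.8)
The plan is to obtain this statement as a direct combination of two results already in place: the semidirect product construction for representations of left Hom-pre-Jacobi-Jordan algebras (Proposition~\ref{spHpJJ}) and the coadjoint representation exhibited in the immediately preceding corollary. The point is that all the substantive verification has already been done, so the proof should consist of identifying the right ingredients and plugging them in.

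First I would recall that, since $(A,\cdot,\alpha)$ is regular, the map $\alpha$ and hence $(\alpha^{-1})^{\ast}$ is invertible and the dual operators $L^{\ast},R^{\ast}$ are well defined. The preceding corollary then shows that $(A^{\ast},\rho,\lambda,\phi)$ is a representation of $(A,\cdot,\alpha)$, where $\rho:=L^{\ast}+R^{\ast}$, $\lambda:=-R^{\ast}$ and $\phi:=(\alpha^{-1})^{\ast}$. This is precisely the coadjoint representation, so there is nothing further to check regarding the representation axioms (\ref{rppHJJ1}) and (\ref{rppHJJ2}); they are granted by that corollary.

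Next I would invoke Proposition~\ref{spHpJJ} with $V=A^{\ast}$ and with the maps $\rho,\lambda,\phi$ above. That proposition guarantees that the semidirect product $A\ltimes A^{\ast}$ is a left Hom-pre-Jacobi-Jordan algebra, with twisting map $\alpha\oplus\phi=\alpha\oplus(\alpha^{-1})^{\ast}$ and multiplication
\begin{eqnarray}
(x_1+\xi_1)\diamond(x_2+\xi_2)=x_1\cdot x_2+\rho(x_1)\xi_2+\lambda(x_2)\xi_1.\nonumber
\end{eqnarray}
Substituting $\rho=L^{\ast}+R^{\ast}$ and $\lambda=-R^{\ast}$ converts the last two summands into $L_{x_1}^{\ast}\xi_2+R_{x_1}^{\ast}\xi_2-R_{x_2}^{\ast}\xi_1$, reproducing exactly the displayed product. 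Multiplicativity with respect to $\alpha\oplus(\alpha^{-1})^{\ast}$ and the left skew-symmetry (\ref{HpJJi}) of the anti-Hom-associator are then automatic, since both are part of the conclusion of Proposition~\ref{spHpJJ}.

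In this argument there is no genuine obstacle: the analytic work was already discharged in constructing the coadjoint representation, and the semidirect product proposition does the rest. The single point that deserves a line of care is the bookkeeping of the substitution, namely that $\rho$ acts on the second dual argument $\xi_2$ while $\lambda$ acts on the first dual argument $\xi_1$, so that the signs and indices in $L_{x_1}^{\ast}\xi_2+R_{x_1}^{\ast}\xi_2-R_{x_2}^{\ast}\xi_1$ emerge as stated. (I would also note that the $\ast$ in the first summand $x_1\ast x_2$ of the displayed formula should be read as $x_1\cdot x_2$, the product of the underlying left Hom-pre-Jacobi-Jordan algebra, as forced by the semidirect construction.)
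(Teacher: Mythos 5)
Your proposal is correct and is exactly the argument the paper intends (the corollary is stated without proof, immediately after the coadjoint representation corollary): one applies the semidirect product construction of Proposition \ref{spHpJJ} to the coadjoint representation $(A^{\ast},L^{\ast}+R^{\ast},-R^{\ast},(\alpha^{-1})^{\ast})$, and the displayed product is the resulting specialization of $x_1\cdot x_2+\rho(x_1)\xi_2+\lambda(x_2)\xi_1$. Your two side remarks — that $x_1\ast x_2$ should read $x_1\cdot x_2$, and that $\rho$ acts on $\xi_2$ while $\lambda$ acts on $\xi_1$ — are both accurate.
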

Similarly as in \cite{slls}, the following result can be proved.
\begin{proposition}\label{prop1}
Let $(V,\rho,\lambda, \phi)$ be a representation of a regular left Hom-pre-Jacobi-Jordan algebra 
 $(A, \cdot ,\alpha)$ such that $\phi$ is invertible. Then the dual representation of $(V, \rho^{\ast}, \lambda^{\ast},(\phi^{-1})^{\ast})$ is $(V,\rho,\lambda, \phi).$
\end{proposition}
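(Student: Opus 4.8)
The plan is to prove that the dualization of Theorem \ref{drplHJJ} is an involution, so that applying it twice returns the original representation under the canonical identification $V^{\ast\ast}\cong V$ (available since $\phi$ is invertible and we are in the regular, finite-dimensional setting). First I would make the construction explicit as an operation $D$ sending a representation $(W,\sigma,\tau,\psi)$ of $(A,\cdot,\alpha)$ with $\psi$ invertible to $D(W,\sigma,\tau,\psi)=(W^{\ast},\sigma^{\ast}+\tau^{\ast},-\tau^{\ast},(\psi^{-1})^{\ast})$, where, writing $f^{t}$ for the naive transpose determined by $\langle f^{t}\xi,u\rangle=\langle\xi,fu\rangle$ (this is the paper's $(\cdot)^{\ast}$ on operators such as $\phi$), the twisted maps are $\sigma^{\ast}(x)=\sigma^{t}(\alpha(x))(\psi^{-2})^{t}$ and likewise for $\tau^{\ast}$, as in (\ref{drpHJJ1})--(\ref{drpHJJ2}). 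By Theorem \ref{drplHJJ} the dual of $(V,\rho,\lambda,\phi)$ is $(V^{\ast},\rho^{\ast}+\lambda^{\ast},-\lambda^{\ast},(\phi^{-1})^{\ast})$, and I would apply $D$ a second time and compare the three resulting data with $\rho,\lambda,\phi$.

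The computation rests on two elementary transpose facts: involutivity $(f^{t})^{t}=f$ for every $f\in gl(V)$ under $V^{\ast\ast}\cong V$, and the unwinding $(\rho^{\ast}(y))^{t}=\phi^{-2}\rho(\alpha(y))$ (and the analogue for $\lambda$), obtained from $(f^{t})^{t}=f$ and $((\phi^{-2})^{t})^{t}=\phi^{-2}$. I would dispatch the twisting map first: since $((\phi^{-1})^{\ast})^{-1}=\phi^{\ast}$, the third component of the double dual is $((\phi^{-1})^{\ast})^{-1})^{\ast}=(\phi^{\ast})^{\ast}=\phi$. For the $\lambda$-slot I would compute the twisted transpose of $-\lambda^{\ast}$, getting $-\phi^{-2}\lambda(\alpha^{2}(x))\phi^{2}$, and then iterate the intertwining relation (\ref{rppHJJ1}) to $\phi^{2}\lambda(x)=\lambda(\alpha^{2}(x))\phi^{2}$, which collapses the conjugation to $-\lambda(x)$; the minus sign in $D$ restores $\lambda(x)$. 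For the $\rho$-slot I would handle $\rho^{\ast}+\lambda^{\ast}$ and $-\lambda^{\ast}$ simultaneously: the twisted transpose of $\rho^{\ast}+\lambda^{\ast}$ is $\phi^{-2}\big(\rho(\alpha^{2}(x))+\lambda(\alpha^{2}(x))\big)\phi^{2}$, which by (\ref{rHJJ1}) iterated to $\phi^{2}\rho(x)=\rho(\alpha^{2}(x))\phi^{2}$ together with (\ref{rppHJJ1}) equals $\rho(x)+\lambda(x)$, while the $-\lambda^{\ast}$ term contributes $-\lambda(x)$; their sum is exactly $\rho(x)$. Matching all three components yields $D\big(D(V,\rho,\lambda,\phi)\big)=(V,\rho,\lambda,\phi)$.

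I expect the main obstacle to be purely organizational: tracking the three nested transpositions together with the shift by $\alpha$ and the two factors of $(\phi^{-2})^{\ast}$ produced at each application of $D$, and applying (\ref{rHJJ1}) and (\ref{rppHJJ1}) in the right order so that the conjugations by $\phi^{\pm 2}$ and the shifts $\alpha^{2}$ cancel. A secondary point to state explicitly is that the identification $V^{\ast\ast}\cong V$ is used throughout and is legitimate here because $\phi$ is assumed invertible in the regular setting, so the equalities above are literal rather than merely canonical. Once the three slots are matched the proposition follows, in direct parallel with the Hom-Jacobi-Jordan computation behind Theorem \ref{repdual} and the splitting of Proposition \ref{sumrl}.
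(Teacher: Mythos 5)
Your argument is correct and is exactly the involutivity computation that the paper leaves to the reader (it gives no proof of this proposition, only the remark that it can be proved ``similarly as in \cite{slls}''): unwinding the two layers of twisted transposes and cancelling the conjugations by $\phi^{\pm 2}$ and the shifts by $\alpha^{2}$ via the intertwining relations (\ref{rHJJ1}) and (\ref{rppHJJ1}) is precisely the intended route, and your slot-by-slot matching $(\rho+\lambda)-\lambda=\rho$, $-(-\lambda)=\lambda$, $((\phi^{\ast})^{-1})^{-1})^{\ast}=\phi$ is the whole content. One minor correction: the identification $V^{\ast\ast}\cong V$ rests on finite-dimensionality of $V$, not on the invertibility of $\phi$; the latter is what makes the twisted dual formulas (\ref{drpHJJ1})--(\ref{drpHJJ2}) and their second application well defined.
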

Fom Theorem \ref{drplHJJ} and Proposition \ref{prop1}, it follows easily
\begin{proposition}
 Let $(V,\rho,\lambda, \phi)$ be a representation of a regular left Hom-pre-Jacobi-Jordan algebra 
 $(A, \cdot ,\alpha)$ such that $\phi$ is invertible. Then
the following conditions are equivalent:
 \begin{enumerate}
  \item[(i)] $(V,\rho+\lambda,-\lambda, \phi)$ is a representation of the
 left Hom-pre-Jacobi-Jordan algebra $(A, \cdot ,\alpha),$
 \item[(ii)] $(V, \rho^{\ast}, \lambda^{\ast},(\phi^{-1})^{\ast})$ is a representation of of the
 left Hom-pre-Jacobi-Jordan algebra
 $(A, \cdot ,\alpha),$
 \item[(iii)] $\mu(\alpha(x))\mu(y)=\mu(\alpha(y))\mu(x)$ for all $x,y\in A.$
 \end{enumerate}
\end{proposition}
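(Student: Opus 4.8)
The plan is to derive the equivalence (i) $\Leftrightarrow$ (ii) from the self-duality already encoded in Theorem~\ref{drplHJJ} and Proposition~\ref{prop1}, and to pin down (i) $\Leftrightarrow$ (iii) by expanding the representation axioms for the explicit triple in (i). The single computation that drives the whole argument is the following: the twisted dual construction of (\ref{drpHJJ1})--(\ref{drpHJJ2}) is linear in the map being dualized, so, writing $D$ for the dual-representation operation of Theorem~\ref{drplHJJ}, applying $D$ to the triple $(\rho+\lambda,-\lambda,\phi)$ of (i) gives
\begin{eqnarray}
D(\rho+\lambda,-\lambda,\phi)=\Big((\rho+\lambda)^{\ast}+(-\lambda)^{\ast},\ -(-\lambda)^{\ast},\ (\phi^{-1})^{\ast}\Big)=\Big(\rho^{\ast},\ \lambda^{\ast},\ (\phi^{-1})^{\ast}\Big),\nonumber
\end{eqnarray}
which is exactly the triple in (ii). Hence (i) and (ii) are interchanged by $D$.

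With this in hand, (i) $\Rightarrow$ (ii) is immediate: if $(\rho+\lambda,-\lambda,\phi)$ is a representation, then by Theorem~\ref{drplHJJ} so is its dual $(\rho^{\ast},\lambda^{\ast},(\phi^{-1})^{\ast})$. For the converse I would use that $D$ is involutive, which is the content of Proposition~\ref{prop1} at the level of the defining formulas (double transpose together with the cancellation of the $\alpha$- and $\phi$-power twists). Thus $D\big(\rho^{\ast},\lambda^{\ast},(\phi^{-1})^{\ast}\big)=(\rho+\lambda,-\lambda,\phi)$, and if the triple in (ii) is a representation then, applying Theorem~\ref{drplHJJ} once more, so is the triple in (i). This is the precise sense in which (i) $\Leftrightarrow$ (ii) follows easily from the two cited results.

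To reach (iii) I would verify (i) $\Leftrightarrow$ (iii) directly from the three defining conditions of a representation of $(A,\cdot,\alpha)$ applied to $\sigma=\rho+\lambda$, $\tau=-\lambda$, $\psi=\phi$. The condition that $\rho+\lambda$ be a representation of the sub-adjacent $A^{C}$ with respect to $\phi$ is precisely Proposition~\ref{sumrl}, hence automatic; the intertwining relation $\phi(-\lambda)(x)=(-\lambda)(\alpha(x))\phi$ is just (\ref{rppHJJ1}); and the last condition (\ref{rppHJJ2}), rewritten for $(\sigma,\tau)$, becomes $-\lambda(x\cdot y)\phi=\lambda(\alpha(y))\rho(x)+\rho(\alpha(x))\lambda(y)+\lambda(\alpha(x))\lambda(y)$. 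Subtracting from this the original identity (\ref{rppHJJ2}) for $(\rho,\lambda,\phi)$ leaves, after cancellation, exactly $\lambda(\alpha(x))\lambda(y)=\lambda(\alpha(y))\lambda(x)$, which is condition (iii) (with the second structure map $\lambda$ playing the role of $\mu$). Combining the two equivalences gives (i) $\Leftrightarrow$ (ii) $\Leftrightarrow$ (iii).

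The only real obstacle is notational: one must handle the $\alpha$- and $\phi$-power twists in (\ref{drpHJJ1})--(\ref{drpHJJ2}) carefully enough to be sure that $D$ is genuinely additive in its first two slots and that $D^{2}$ is the identity on the relevant triples, so that the displayed identity and the application of Proposition~\ref{prop1} are exact rather than merely formal. Once that bookkeeping is settled, each implication is a one-line cancellation.
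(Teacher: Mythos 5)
Your proposal is correct and follows essentially the route the paper intends: the paper gives no written proof beyond the remark that the statement ``follows easily'' from Theorem~\ref{drplHJJ} and Proposition~\ref{prop1}, and your argument makes that precise -- linearity of the twisted dual gives $D(\rho+\lambda,-\lambda,\phi)=(\rho^{\ast},\lambda^{\ast},(\phi^{-1})^{\ast})$, involutivity of $D$ gives the converse, and the direct expansion of (\ref{rppHJJ2}) for $(\rho+\lambda,-\lambda)$ reduces, after cancelling against (\ref{rppHJJ2}) for $(\rho,\lambda)$, to $\lambda(\alpha(x))\lambda(y)=\lambda(\alpha(y))\lambda(x)$. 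Your identification of the undefined symbol $\mu$ in condition (iii) with the second structure map $\lambda$ is also the correct reading of the statement.
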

\begin{definition}
A matched pair of left Hom-pre-Jacobi-Jordan algebras denoted by $(A_1,A_2,\rho_1,\lambda_1,\rho_2,\lambda_2),$ consists of two left Hom-pre-Jacobi-Jordan algebras $\mathcal{A}_1:=(A_1,\cdot,\alpha_1)$ and 
 $\mathcal{A}_2:=(A_2,\top,\alpha_2)$ together with representations $\rho_1, \mu_1: A_1\rightarrow gl(A_2)$ and 
 $\rho_2, \mu_2: A_2\rightarrow gl(A_1)$ with respect to $\alpha_2$ and $\alpha_1$ respectively such that for all $x,y,z\in A_1,$ $a,b,c\in A_2.$  the following conditions hold
 \begin{eqnarray}
  \rho_1(\alpha_1(x))(a\intercal b)&=&-\rho_1(\rho_2(a)x+\lambda_2(a)x)\alpha_2(b)-(\rho_1(x)a+\lambda_1(x)a)\intercal\alpha_2(b)\nonumber\\
 &&-\lambda_1(\lambda_2(b)x)\alpha_2(a)-\alpha_2(a)\intercal(\rho_1(x)b)\label{mppHJJ1}\\
 \lambda_1(\alpha_1(x))(a\circledast b)&=&-\alpha_2(a)\intercal(\lambda_1(x)b)-\alpha_2(b)\intercal(\lambda_1(x)a)
 -\lambda_1(\rho_2(a)x)\alpha_2(b)\nonumber\\
 &&-\lambda_1(\rho_2(b)x)\alpha_2(a)\label{mppHJJ2}\\
  \rho_2(\alpha_2(a))(x\cdot y)&=&-\rho_2(\rho_1(x)a+\lambda_1(x)a)\alpha_1(y)-(\rho_2(a)x+\lambda_2(a)x)\cdot\alpha_1(y)\nonumber\\
 &&-\lambda_2(\lambda_1(y)a)\alpha_1(x)-\alpha_1(x)\cdot(\rho_2(a)y)\label{mppHJJ3}\\
 \lambda_2(\alpha_2(a))(x\star y)&=&-\alpha_1(x)\cdot(\lambda_2(a)y)-\alpha_1(y)\cdot(\lambda_2(a)x)
 -\lambda_2(\rho_1(x)a)\alpha_1(y)\nonumber\\
 &&-\lambda_2(\rho_1(y)a)\alpha_1(x)\label{mppHJJ4}
 \end{eqnarray}
 where $\star$ is a product of the sub-adjacent Hom-Jacobi-Jordan algebra $A_1^C$ and $\circledast$ is a product of the sub-adjacent Hom-Jacobi-Jordan algebra $A_2^C.$
\end{definition}
\begin{theorem}
Let $\mathcal{A}_1:=(A_1,\cdot,\alpha_1)$ and 
 $\mathcal{A}_2:=(A_2,\top,\alpha_2)$ be left Hom-pre-Jacobi-Jordan algebras. Then,
 $(A_1,A_2,\rho_1,\lambda_1,\rho_2,\lambda_2),$ is a matched pair of left Hom-pre-Jacobi-Jordan algebras if and only if  $(A_1\oplus A_2, \diamond, \alpha_1\oplus\alpha_2)$ is a left Hom-pre-Jacobi-Jordan algebra where
\begin{eqnarray}
 (x+a)\diamond(y+b)&:=&(x\cdot y+\rho_2(a)y+\lambda_2(b)x)+(a\top b+\rho_1(x)b+\lambda_1(y)a) \label{oppHJJ1}\\
 (\alpha_1\oplus\alpha_2)(x+a)&:=&\alpha_1(x)+\alpha_2(a)\nonumber
\end{eqnarray}
\end{theorem}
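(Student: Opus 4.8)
The plan is to verify the two defining properties of a left Hom-pre-Jacobi-Jordan algebra for $(A_1\oplus A_2,\diamond,\alpha_1\oplus\alpha_2)$ — multiplicativity and the left skew-symmetry of the anti-Hom-associator (\ref{HpJJi}) — and to read off the matched-pair axioms as the precise conditions under which the latter holds. Throughout I write $X=x+a$, $Y=y+b$, $Z=z+c$ with $x,y,z\in A_1$ and $a,b,c\in A_2$, and I abbreviate $\widetilde\alpha=\alpha_1\oplus\alpha_2$. Multiplicativity $\widetilde\alpha(X\diamond Y)=\widetilde\alpha(X)\diamond\widetilde\alpha(Y)$ is the easy part: expanding (\ref{oppHJJ1}) it reduces, component by component, to the multiplicativity of $\cdot$ and $\top$ together with the intertwining relations (\ref{rHJJ1}) and (\ref{rppHJJ1}) satisfied by $\rho_1,\lambda_1,\rho_2,\lambda_2$, all of which hold by hypothesis.

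The core of the proof is the identity (\ref{HpJJi2}) for $\diamond$, that is $(X\star Y)\diamond\widetilde\alpha(Z)+\widetilde\alpha(X)\diamond(Y\diamond Z)+\widetilde\alpha(Y)\diamond(X\diamond Z)=0$, where $X\star Y=X\diamond Y+Y\diamond X$. I would substitute $X,Y,Z$ and expand every occurrence of $\diamond$ using (\ref{oppHJJ1}), then project the resulting vector onto $A_1$ and onto $A_2$ and sort the terms by how many of the three arguments contribute their $A_1$-part versus their $A_2$-part. The terms built purely from $A_1$-entries reassemble into $as^t_{\alpha_1}(x,y,z)+as^t_{\alpha_1}(y,x,z)$, which vanishes because $(A_1,\cdot,\alpha_1)$ is left Hom-pre-Jacobi-Jordan; symmetrically the purely $A_2$ terms vanish by the axiom of $(A_2,\top,\alpha_2)$. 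The terms carrying two actions of the same representation — for instance the $A_2$-valued piece $\rho_1(x\star y)\alpha_2(a)+\rho_1(\alpha_1(x))\rho_1(y)a+\rho_1(\alpha_1(y))\rho_1(x)a$ — vanish by the representation identity (\ref{rHJJ2}) for $\rho_1$ (and its analogue for $\rho_2$, together with (\ref{rppHJJ2}) for $\lambda_1,\lambda_2$), which are already part of the hypothesis that the $\rho_i,\lambda_i$ are representations.

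What then survives are exactly the genuinely mixed terms, those interleaving one action coming from $A_1$ with one coming from $A_2$. These fall into four independent bundles, distinguished by their value space ($A_1$ or $A_2$) and by the position of the lone opposite-type argument, which is what determines whether a product appears in the bare form $x\cdot y$ or in the symmetrized form $x\star y$. Collecting them, the $A_2$-valued bundles are precisely the left- minus right-hand sides of (\ref{mppHJJ1}) and (\ref{mppHJJ2}), while the $A_1$-valued bundles are precisely those of (\ref{mppHJJ3}) and (\ref{mppHJJ4}). Since the arguments $x,y,z,a,b,c$ are free, multilinearity lets me decouple these four bundles, so (\ref{HpJJi2}) holds for $\diamond$ if and only if all of (\ref{mppHJJ1})--(\ref{mppHJJ4}) hold; this yields both implications of the equivalence simultaneously.

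The main obstacle is purely organizational: the expansion produces a large number of $\rho$- and $\lambda$-cross terms, and the delicate point is to match signs and argument positions so that the surviving terms line up verbatim with the asymmetric right-hand sides of (\ref{mppHJJ1})--(\ref{mppHJJ4}). Keeping the value-space projection and the multidegree bookkeeping strictly separate — and invoking the representation axioms (\ref{rHJJ2}) and (\ref{rppHJJ2}) only to kill the same-representation terms — is what prevents the computation from collapsing into an unmanageable mass. This mirrors, with an extra pair $(\lambda_1,\lambda_2)$ of left-multiplication data, the semidirect-product computation already carried out in Proposition \ref{spHpJJ}.
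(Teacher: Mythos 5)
Your proposal is correct and follows essentially the same route as the paper: verify multiplicativity from the intertwining relations, expand the symmetrized anti-Hom-associator $as^t_{\alpha_1\oplus\alpha_2}(X,Y,Z)+as^t_{\alpha_1\oplus\alpha_2}(Y,X,Z)$, cancel the pure and same-representation terms using the axioms of $\mathcal{A}_1$, $\mathcal{A}_2$ and the representation identities, and identify the surviving mixed bundles with (\ref{mppHJJ1})--(\ref{mppHJJ4}). Your explicit remark that multilinearity in the free arguments decouples the bundles is a point the paper leaves implicit, but the argument is otherwise the same.
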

\begin{proof} Fist, the commutativity of $\diamond$ and its multiplicativity with respect to $\alpha_1\oplus\alpha_2$ is equivalent to the multiplicativity of $\cdot$ and $\top$ with respect to $\alpha_1$ and $\alpha_2$ respectively and Condition (\ref{rHJJ1}). Next, let $x,y,z\in A_1$ and $a,b,\in A_2$, then by straightforward computations
\begin{eqnarray}
&& as_{\alpha_1\oplus\alpha_2}(x+a,y+b,z+c)\nonumber\\
&&=((x+a)\diamond(y+b))\diamond(\alpha_1\oplus\alpha_2)(z+c)+
 (\alpha_1\oplus\alpha_2)(x+a)\diamond((y+b)\diamond(z+c))\nonumber\\
 &&=(x\cdot y)\cdot\alpha_1(z)+(\rho_2(a)y)\cdot\alpha_1(z)+(\lambda_2(b)x)\cdot\alpha_1(z)+\rho_2(a\intercal b)\alpha_1(z)+\rho_2(\rho_1(x)b)\alpha_1(z)\nonumber\\
 &&+\rho_2(\lambda_1(y)a)\alpha_1(z)+\lambda_2(\alpha_2(c))(x\cdot y)+\lambda_2(\alpha_2(c))\rho_2(a)y+\lambda_2(\alpha_2(c))\lambda_2(b)x+(a\intercal b)\intercal\alpha_2(c)\nonumber\\
 &&+(\rho_1(x)b)\intercal\alpha_2(c)+(\lambda_1(y)a)\intercal\alpha_2(c)+\rho_1(x\cdot y)\alpha_2(c)+\rho_1(\rho_2(a)y)\alpha_2(c)
 +\rho_1(\lambda_2(b)x)\alpha_2(c)\nonumber\\
 &&+\lambda_1(\alpha_1(z))(a\intercal b)+\lambda_1(\alpha_1(z))\rho_1(x)b+\lambda_1(\alpha_1(z))\lambda_1(y)a
 +\alpha_1(x)\cdot(y\cdot z)+\alpha_1(x)\cdot(\rho_2(b)z)\nonumber\\
 &&+\alpha_1(x)\cdot(\lambda_2(c)y)+\rho_2(\alpha_2(a))(y\cdot z)+\rho_2(\alpha_2(a))\rho_2(b)z+\rho_2(\alpha_2(a))\lambda_2(c)y+\lambda_2(b\intercal c)\alpha_1(x)\nonumber\\
 &&+\lambda_2(\rho_1(y)c)\alpha_1(x)+\lambda_2(\lambda_1(z)b)\alpha_1(x)
 +\alpha_2(a)\intercal(b\intercal c)+\alpha_2(a)\intercal(\rho_1(y)c)
+\alpha_2(a)\intercal(\lambda_1(z)b)\nonumber\\
&&+\rho_1(\alpha_1(x))(b\intercal c)+\rho_1(\alpha_1(x))\rho_1(y)c+\rho_1(\alpha_1(x))\lambda_1(z)b+\lambda_1(y\cdot z)\alpha_2(a)+\lambda_1(\rho_2(b)z)\alpha_2(a)\nonumber\\
&&+\lambda_1(\lambda_2(c)y)\alpha_2(a).\nonumber
\end{eqnarray}
Switching $x+a$ and $y+b$ in the above expression of $as_{\alpha_1\oplus\alpha_2}(x+a,y+b,z+c)$, we obtain  $as_{\alpha_1\oplus\alpha_2}(y+b,x+a,z+c).$
Next, after rearranging terms, we obtain
\begin{eqnarray}
&&as_{\alpha_1\oplus\alpha_2}(x+a,y+b,z+c)+as_{\alpha_1\oplus\alpha_2}(y+b,x+a,z+c)
 =\Big(as_{\alpha_1}^t(x,y,z)+as_{\alpha_1}^t(y,x,z)\Big)\nonumber\\
 &&+\Big(as_{\alpha_2}^t(a,b,c)+as_{\alpha_2}^t(b,a,c)\Big)+
 \Big(\lambda_1(\alpha_1(z))\lambda_1(y)a+\lambda_1(y\cdot z)\alpha_2(a)+\lambda_1(\alpha_1(z))\rho_1(y)a
 \nonumber\\
 &&+\rho_1(\alpha_1(y))\lambda_1(z)a \Big)+
 \Big(\lambda_1(\alpha_1(z))\lambda_1(x)b+\lambda_1(x\cdot z)\alpha_2(b)+\lambda_1(\alpha_1(z))\rho_1(x)b+\rho_1(\alpha_1(x))\lambda_1(z)b \Big)\nonumber\\
 &&+\Big(\lambda_2(\alpha_2(c))\lambda_2(b)x+\lambda_2(b\intercal c)\alpha_1(x)+\lambda_2(\alpha_2(c))\rho_2(b)x+\rho_2(\alpha_2(b))\lambda_2(c)x \Big)+
 +\Big(\lambda_2(\alpha_2(c))\lambda_2(a)y\nonumber\\
 &&+\lambda_2(a\intercal c)\alpha_1(y)+\lambda_2(\alpha_2(c))\rho_2(a)y+\rho_2(\alpha_2(a))\lambda_2(c)y \Big)+
 \Big(\rho_1(x\star y)\alpha_2(c)+\rho_1(\alpha_1(x))\rho_1(y)c\nonumber
 \end{eqnarray}
 \begin{eqnarray}
 &&+ \rho_1(\alpha_1(y))\rho_1(x)c\Big)+
 \Big(\rho_2(a\circledast b)\alpha_1(z)+\rho_2(\alpha_2(a))\rho_2(b)z+ \rho_2(\alpha_2(b))\rho_2(a)z\Big)\nonumber\\
 &&+
 \Big( \rho_1(\alpha_1(y))(a\intercal c)+\rho_1(\rho_2(a)y+\lambda_2(a)y)\alpha_2(c)+(\rho_1(y)a+\lambda_1(y)a)\intercal\alpha_2(c)+\lambda_1(\lambda_2(c)y)\alpha_2(a)\nonumber\\
 &&+\alpha_2(a)\intercal(\rho_1(y)c)\Big)
 +\Big( \rho_1(\alpha_1(x))(b\intercal c)+\rho_1(\rho_2(b)x+\lambda_2(b)x)\alpha_2(c)+(\rho_1(x)b+\lambda_1(x)b)\intercal\alpha_2(c)\nonumber\\
 &&+\lambda_1(\lambda_2(c)x)\alpha_2(b)
 +\alpha_2(b)\intercal(\rho_1(x)c)\Big)
 +\Big(\lambda_1(\alpha_1(z)(a\circledast b)+\alpha_2(a)\intercal(\lambda_1(z)b)+\alpha_2(b)\intercal(\lambda_1(z)a)\nonumber\\
 &&+\lambda_1(\rho_2(a)z)\alpha_2(b)+\lambda_1(\rho_2(b)z)\alpha_2(a)   \Big)+
 \Big(\rho_2(\alpha_2(a))(y\cdot z)+\rho_2(\rho_1(y)a+\lambda_1(y)a)\alpha_1(z)\nonumber\\
 &&+(\rho_2(a)y+\lambda_2(a)y)\cdot\alpha_1(z)+\lambda_2(\lambda_1(z)a)\alpha_1(y)+\alpha_1(y)\cdot(\rho_2(a)z) \Big)+\Big(\rho_2(\alpha_2(b))(x\cdot z)\nonumber\\
 &&+\rho_2(\rho_1(x)b+\lambda_1(x)b)\alpha_1(z)+(\rho_2(b)x+\lambda_2(b)x)\cdot\alpha_1(z)+\lambda_2(\lambda_1(z)b)\alpha_1(x)+\alpha_1(x)\cdot(\rho_2(b)z) \Big)\nonumber\\
 &&+\Big(\lambda_2(\alpha_2(c))(x\star y)+\alpha_1(x)\cdot(\lambda_2(c)y)+\alpha_1(y)\cdot(\lambda_2(c)x)
 +\lambda_2(\rho_1(x)c)\alpha_1(y)+\lambda_2(\rho_1(y)c)\alpha_1(x) \Big).\nonumber
\end{eqnarray}
Thanks to  (\ref{rHJJ2}), (\ref{HpJJi}) and (\ref{rppHJJ2}), we obtain
\begin{eqnarray}
&&as_{\alpha_1\oplus\alpha_2}(x+a,y+b,z+c)+as_{\alpha_1\oplus\alpha_2}(y+b,x+a,z+c)
\nonumber\\ 
 &&=\Big( \rho_1(\alpha_1(y))(a\intercal c)+\rho_1(\rho_2(a)y+\lambda_2(a)y)\alpha_2(c)+(\rho_1(y)a+\lambda_1(y)a)\intercal\alpha_2(c)+\lambda_1(\lambda_2(c)y)\alpha_2(a)\nonumber\\
 &&+\alpha_2(a)\intercal(\rho_1(y)c)\Big)
 +\Big( \rho_1(\alpha_1(x))(b\intercal c)+\rho_1(\rho_2(b)x+\lambda_2(b)x)\alpha_2(c)+(\rho_1(x)b+\lambda_1(x)b)\intercal\alpha_2(c)\nonumber\\
 &&+\lambda_1(\lambda_2(c)x)\alpha_2(b)
 +\alpha_2(b)\intercal(\rho_1(x)c)\Big)
 +\Big(\lambda_1(\alpha_1(z)(a\circledast b)+\alpha_2(a)\intercal(\lambda_1(z)b)+\alpha_2(b)\intercal(\lambda_1(z)a)\nonumber\\
 &&+\lambda_1(\rho_2(a)z)\alpha_2(b)+\lambda_1(\rho_2(b)z)\alpha_2(a)   \Big)+
 \Big(\rho_2(\alpha_2(a))(y\cdot z)+\rho_2(\rho_1(y)a+\lambda_1(y)a)\alpha_1(z)\nonumber\\
 &&+(\rho_2(a)y+\lambda_2(a)y)\cdot\alpha_1(z)+\lambda_2(\lambda_1(z)a)\alpha_1(y)+\alpha_1(y)\cdot(\rho_2(a)z) \Big)+\Big(\rho_2(\alpha_2(b))(x\cdot z)\nonumber\\
 &&+\rho_2(\rho_1(x)b+\lambda_1(x)b)\alpha_1(z)+(\rho_2(b)x+\lambda_2(b)x)\cdot\alpha_1(z)+\lambda_2(\lambda_1(z)b)\alpha_1(x)+\alpha_1(x)\cdot(\rho_2(b)z) \Big)\nonumber\\
 &&+\Big(\lambda_2(\alpha_2(c))(x\star y)+\alpha_1(x)\cdot(\lambda_2(c)y)+\alpha_1(y)\cdot(\lambda_2(c)x)
 +\lambda_2(\rho_1(x)c)\alpha_1(y)+\lambda_2(\rho_1(y)c)\alpha_1(x) \Big).\nonumber
\end{eqnarray}
We deduce that (\ref{HpJJi}) holds in $A_1\oplus A_2$  if and only (\ref{mppHJJ1}), (\ref{mppHJJ2}), (\ref{mppHJJ3}) and (\ref{mppHJJ4}) holds.
\end{proof}
\begin{proposition}
Let $(A_1,A_2,\rho_1,\lambda_1,\rho_2,\lambda_2)$ be a matched pair of left Hom-pre-Jacobi-Jordan algebras
$\mathcal{A}_1:=(A_1,\cdot,\alpha_1)$ and $\mathcal{A}_2:=(A_2,\top,\alpha_2).$  Then,
$(A_1, A_2, \rho_1+\lambda_1,\rho_2+\lambda_2)$ is a matched pair of sub-adjacent Hom-Jacobi-Jordan algebras $A_1^C:=(A_1,\star,\alpha_1)$ and $A_2^C:=(A_2,\circledast,\alpha_2)$.
\end{proposition}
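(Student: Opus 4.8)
The plan is to avoid verifying the Hom-Jacobi-Jordan matched-pair conditions (\ref{mpHJJ1}) and (\ref{mpHJJ2}) directly, and instead to transport the structure through the big algebra on $A_1\oplus A_2$. First I would record that the required representations are already in hand: since $(A_2,\rho_1,\lambda_1,\alpha_2)$ is a representation of $(A_1,\cdot,\alpha_1)$, Proposition \ref{sumrl} shows that $\rho_1+\lambda_1$ is a representation of the sub-adjacent algebra $A_1^C=(A_1,\star,\alpha_1)$ with respect to $\alpha_2$; symmetrically, $\rho_2+\lambda_2$ is a representation of $A_2^C=(A_2,\circledast,\alpha_2)$ with respect to $\alpha_1$. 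This furnishes precisely the representation data needed to even speak of the candidate matched pair $(A_1,A_2,\rho_1+\lambda_1,\rho_2+\lambda_2)$ of Hom-Jacobi-Jordan algebras.

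Next I would invoke the theorem characterizing matched pairs of left Hom-pre-Jacobi-Jordan algebras: since $(A_1,A_2,\rho_1,\lambda_1,\rho_2,\lambda_2)$ is such a matched pair, the direct sum $(A_1\oplus A_2,\diamond,\alpha_1\oplus\alpha_2)$ with product (\ref{oppHJJ1}) is a left Hom-pre-Jacobi-Jordan algebra. By Proposition \ref{lpHJJHJJ}, its sub-adjacent Hom-Jacobi-Jordan algebra $(A_1\oplus A_2,\tilde{\diamond},\alpha_1\oplus\alpha_2)$, with $X\tilde{\diamond}Y:=X\diamond Y+Y\diamond X$, is a Hom-Jacobi-Jordan algebra. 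The decisive step is then to symmetrize (\ref{oppHJJ1}): for $x,y\in A_1$ and $a,b\in A_2$ the $\cdot$- and $\top$-terms collapse into $\star$ and $\circledast$, while the $\rho_i$- and $\lambda_i$-actions recombine, giving
\begin{eqnarray}
(x+a)\tilde{\diamond}(y+b)&=&\Big(x\star y+(\rho_2+\lambda_2)(a)y+(\rho_2+\lambda_2)(b)x\Big)\nonumber\\
&&+\Big(a\circledast b+(\rho_1+\lambda_1)(x)b+(\rho_1+\lambda_1)(y)a\Big).\nonumber
\end{eqnarray}
This is exactly the direct-sum product (\ref{opHJJ1}) attached to the data $(A_1^C,A_2^C,\rho_1+\lambda_1,\rho_2+\lambda_2)$.

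Finally, since this product makes $A_1\oplus A_2$ into a Hom-Jacobi-Jordan algebra and since $\rho_1+\lambda_1$ and $\rho_2+\lambda_2$ are representations by the first paragraph, reading the theorem characterizing matched pairs of Hom-Jacobi-Jordan algebras in the converse direction forces (\ref{mpHJJ1}) and (\ref{mpHJJ2}) to hold for the new data. Hence $(A_1,A_2,\rho_1+\lambda_1,\rho_2+\lambda_2)$ is a matched pair of the sub-adjacent algebras $A_1^C$ and $A_2^C$. The only delicate point is the bookkeeping in the symmetrization: one must check that each of the six summands valued in $A_1$ and in $A_2$ pairs correctly with its swapped counterpart, so that the mixed $\rho$/$\lambda$ contributions assemble into $\rho_i+\lambda_i$ and no extraneous cross-terms survive. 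Everything else is a formal application of the two matched-pair theorems together with Propositions \ref{sumrl} and \ref{lpHJJHJJ}.
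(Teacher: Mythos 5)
Your proof is correct, but it takes a genuinely different route from the paper. The paper shares your first step (invoking Proposition \ref{sumrl} to see that $\rho_1+\lambda_1$ and $\rho_2+\lambda_2$ are representations of the sub-adjacent algebras), but then verifies the matched-pair identities (\ref{mpHJJ1}) and (\ref{mpHJJ2}) head-on: it expands, say, $\mu_1(\alpha_1(x))(a\circledast b)+(\mu_1(x)a)\circledast\alpha_2(b)+(\mu_1(x)b)\circledast\alpha_2(a)+\mu_1(\rho_2(a)x)\alpha_2(b)+\mu_1(\rho_2(b)x)\alpha_2(a)$ with $\mu_1=\rho_1+\lambda_1$ and regroups the terms into two instances of the left-hand side of (\ref{mppHJJ1}) (with $(x,a,b)$ and $(x,b,a)$) plus one instance of (\ref{mppHJJ2}), each of which vanishes; the second identity is handled symmetrically via (\ref{mppHJJ3}) and (\ref{mppHJJ4}). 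You instead transport the structure through the direct sum: the pre-Jacobi-Jordan matched-pair theorem gives a left Hom-pre-Jacobi-Jordan structure $\diamond$ on $A_1\oplus A_2$, Proposition \ref{lpHJJHJJ} symmetrizes it to a Hom-Jacobi-Jordan structure, your (correct) bookkeeping shows the symmetrized product is exactly the product (\ref{opHJJ1}) attached to $(A_1^C,A_2^C,\rho_1+\lambda_1,\rho_2+\lambda_2)$, and the converse direction of the Hom-Jacobi-Jordan matched-pair theorem then yields (\ref{mpHJJ1}) and (\ref{mpHJJ2}). Your argument is shorter and more conceptual, at the cost of leaning on the biconditional form of both matched-pair theorems; in particular, the converse direction you invoke implicitly isolates each of the conditions (\ref{mpHJJ1}), (\ref{mpHJJ2}) from the vanishing of the full cyclic sum (by specializing variables to zero), and it presupposes --- as you correctly arrange in your first paragraph --- that the representation conditions are already known, since the paper's proof of that equivalence uses (\ref{rHJJ2}) along the way. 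The paper's direct computation buys self-containedness and makes the term-by-term cancellation against (\ref{mppHJJ1})--(\ref{mppHJJ4}) explicit; your route buys brevity and makes clear that the proposition is a formal consequence of the two direct-sum characterizations.
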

\begin{proof}
 Since $(A_1,\rho_2,\lambda_2,\alpha_1)$ and $(A_2,\rho_1,\lambda_1,\alpha_2)$ are representations of left Hom-pre-Jacobi-Jordan algebras $(A_2,\intercal,\alpha_2)$
 and $(A_1,\cdot,\alpha_1)$ respectively, it follows by Proposition \ref{sumrl}  that $(A_1,\mu_2:=\rho_2+\lambda_2,\alpha_1)$ and $(A_2,\mu_1:=\rho_1+\lambda_1,\alpha_2)$ are representations of sub-adjacent Hom-Jacobi-Jordan algebras $A_2^C$ and $A_1^C$ respectively.
  Next, let $x,y\in A_1$ and $a,b\in A_2.$ Then, by straightforward computations, after rearranging terms we get:
  \begin{eqnarray}
 && \mu_1(\alpha_1(x))(a\circledast b)+(\mu_1(x)a)\circledast\alpha_2(b)+(\mu_1(x)b)\circledast\alpha_2(a)
 +\mu_1(\rho_2(a)x)\alpha_2(b)+\mu_1(\rho_2(b)x)\alpha_2(a)\nonumber\\
 &&=
\Big(\rho_1(\alpha_1(x))(a\intercal b)+\rho_1(\rho_2(a)x+\lambda_2(a)x)\alpha_2(b)
+(\rho_1(x)a+\lambda_1(x)a)\intercal\alpha_2(b)
 +\lambda_1(\lambda_2(b)x)\alpha_2(a)\nonumber\\
 &&+\alpha_2(a)\intercal(\rho_1(x)b)\Big)
 + \Big(\rho_1(\alpha_1(x))(b\intercal a)+\rho_1(\rho_2(b)x+\lambda_2(b)x)\alpha_2(a)+(\rho_1(x)b+\lambda_1(x)b)\intercal\alpha_2(a)\nonumber\\
 &&
 +\lambda_1(\lambda_2(a)x)\alpha_2(b)+\alpha_2(b)\intercal(\rho_1(x)a)\Big)
  +\Big(\lambda_1(\alpha_1(x))(a\circledast b)+\alpha_2(a)\intercal(\lambda_1(x)b)+\alpha_2(b)\intercal(\lambda_1(x)a)
 \nonumber\\
 &&+\lambda_1(\rho_2(a)x)\alpha_2(b)+\lambda_1(\rho_2(b)x)\alpha_2(a)\Big)=0 
 \mbox{ ( by (\ref{mppHJJ1}), (\ref{mppHJJ2}) )}.\nonumber
 \end{eqnarray}
 Hence, we obtain (\ref{mpHJJ1}). Similarly, we compute
  \begin{eqnarray}
 && \mu_2(\alpha_2(a))(x\star y)+(\mu_2(a)x)\star\alpha_1(y)+(\mu_2(a)y)\star\alpha_1(x)
 +\mu_2(\rho_1(x)a)\alpha_1(y)+\mu_2(\rho_1(y)a)\alpha_1(x)\nonumber\\
 &&=
\Big(\rho_2(\alpha_2(a))(x\cdot y)+\rho_2(\rho_1(x)a+\lambda_1(x)a)\alpha_1(y)
+(\rho_2(a)x+\lambda_2(a)x)\cdot\alpha_1(y)
 +\lambda_2(\lambda_1(y)a)\alpha_1(x)\nonumber\\
 &&+\alpha_1(x)\cdot(\rho_2(a)y)\Big)
 + \Big(\rho_2(\alpha_2(a))(y\cdot x)+\rho_2(\rho_1(y)a+\lambda_1(y)a)\alpha_1(x)+(\rho_2(a)y+\lambda_2(a)y)\cdot\alpha_1(x)\nonumber\\
 &&
 +\lambda_2(\lambda_1(x)a)\alpha_1(y)+\alpha_1(y)\cdot(\rho_2(a)x)\Big)
  +\Big(\lambda_2(\alpha_2(a))(x\star y)+\alpha_1(x)\cdot(\lambda_2(a)y)+\alpha_1(y)\cdot(\lambda_2(a)x)
 \nonumber\\
 &&+\lambda_2(\rho_1(x)a)\alpha_1(y)+\lambda_2(\rho_1(y)a)\alpha_1(x)\Big)=0 
 \mbox{ ( by (\ref{mppHJJ3}), (\ref{mppHJJ4}) )}.\nonumber
 \end{eqnarray}
Then, we get also (\ref{mpHJJ2}).
\end{proof}
For the next result, we suppose that there is a left Hom-pre-Jacobi-Jordan algebra structure on $A^{\ast}$ and we denote $\mathcal{L}$ and $\mathcal{R}$ the corresponding operations. Now, we can prove
\begin{proposition}
 Let $(A,\cdot,\alpha)$ and $(A^{\ast},\intercal,(\alpha^{-1})^{\ast})$ be two left Hom-pre-Jacobi-Jordan algebras. Then,
 $(A^,(A^{\ast})^C, L^{\ast}, \mathcal{L}^{\ast})$ is a matched pair of Hom-Jacobi Jordan algebras if and only if
 $(A,A^{\ast},L^{\ast}+R^{\ast},-R^{\ast},\mathcal{L}^{\ast}+\mathcal{R}^{\ast},-\mathcal{R}^{\ast} )$ is a matched pair of
 left pre-Hom-Jacobi-Jordan algebras.
\end{proposition}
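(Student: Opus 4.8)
The plan is to derive the equivalence from the dual (coadjoint) representation machinery together with the immediately preceding proposition, which states that aggregating the two representations of a matched pair of left Hom-pre-Jacobi-Jordan algebras produces a matched pair of the sub-adjacent Hom-Jacobi-Jordan algebras. The decisive bookkeeping observation is that, for the data at hand, the aggregated representations collapse to the left-multiplication ones: since $\rho_1=L^{\ast}+R^{\ast}$, $\lambda_1=-R^{\ast}$, $\rho_2=\mathcal{L}^{\ast}+\mathcal{R}^{\ast}$ and $\lambda_2=-\mathcal{R}^{\ast}$, one has $\rho_1+\lambda_1=L^{\ast}$ and $\rho_2+\lambda_2=\mathcal{L}^{\ast}$. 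Hence the sub-adjacent matched pair attached to $(A,A^{\ast},L^{\ast}+R^{\ast},-R^{\ast},\mathcal{L}^{\ast}+\mathcal{R}^{\ast},-\mathcal{R}^{\ast})$ is exactly $(A,(A^{\ast})^{C},L^{\ast},\mathcal{L}^{\ast})$.

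For the backward implication I would argue directly: assuming $(A,A^{\ast},L^{\ast}+R^{\ast},-R^{\ast},\mathcal{L}^{\ast}+\mathcal{R}^{\ast},-\mathcal{R}^{\ast})$ is a matched pair of left Hom-pre-Jacobi-Jordan algebras, the preceding proposition applied verbatim, combined with $\rho_1+\lambda_1=L^{\ast}$ and $\rho_2+\lambda_2=\mathcal{L}^{\ast}$, shows that $(A,(A^{\ast})^{C},L^{\ast},\mathcal{L}^{\ast})$ satisfies (\ref{mpHJJ1}) and (\ref{mpHJJ2}), so it is a matched pair of Hom-Jacobi-Jordan algebras. This direction costs nothing beyond the remark above.

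The forward implication is the substantive one, since it must recover the four pre-conditions (\ref{mppHJJ1})--(\ref{mppHJJ4}) from the two conditions (\ref{mpHJJ1})--(\ref{mpHJJ2}). First I would record that the representations entering the pre-Hom-Jacobi-Jordan matched pair are legitimate: by the coadjoint representation corollary applied to the regular algebras $(A,\cdot,\alpha)$ and $(A^{\ast},\intercal,(\alpha^{-1})^{\ast})$, the triples $(A^{\ast},L^{\ast}+R^{\ast},-R^{\ast},(\alpha^{-1})^{\ast})$ and $(A,\mathcal{L}^{\ast}+\mathcal{R}^{\ast},-\mathcal{R}^{\ast},\alpha)$ are representations of the respective left Hom-pre-Jacobi-Jordan algebras; in particular the representation axioms (\ref{rppHJJ1}), (\ref{rppHJJ2}) and (\ref{rHJJ2}) hold for them automatically, so only the four compatibility identities remain. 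These I would verify by transferring to the canonical pairing $\langle\cdot,\cdot\rangle$ between $A$ and $A^{\ast}$: pairing each of (\ref{mppHJJ1})--(\ref{mppHJJ4}) with an arbitrary test element and moving the operators across the pairing through the defining adjoint relations $\langle L_{x}^{\ast}\xi,y\rangle=\langle\xi,L_{x}y\rangle$ (and likewise for $R^{\ast},\mathcal{L}^{\ast},\mathcal{R}^{\ast}$, with the $(\alpha^{-2})^{\ast}$ twists of (\ref{drpHJJ1})--(\ref{drpHJJ2}) carried along) turns them into scalar identities among $L,R$ on $A$ and $\mathcal{L},\mathcal{R}$ on $A^{\ast}$.

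The main obstacle is exactly the passage from two identities to four. The preceding proposition only shows that the combinations $\rho_1+\lambda_1$ and $\rho_2+\lambda_2$ satisfy (\ref{mpHJJ1})--(\ref{mpHJJ2}), so (\ref{mpHJJ1})--(\ref{mpHJJ2}) give the \emph{sums} of (\ref{mppHJJ1}) with (\ref{mppHJJ2}) and of (\ref{mppHJJ3}) with (\ref{mppHJJ4}), but not their individual members. The extra leverage that splits them comes from the left Hom-pre-Jacobi-Jordan identity (\ref{HpJJi}) holding on \emph{both} $A$ and $A^{\ast}$ together with the self-duality relation (condition (iii) of the equivalence proposition above, $\mu(\alpha(x))\mu(y)=\mu(\alpha(y))\mu(x)$), which is available here precisely because the coadjoint representations are genuine dual representations. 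Concretely I would reuse the term-grouping of the previous theorem's anti-Hom-associator computation for $A\oplus A^{\ast}$, isolate the $\lambda$-only blocks corresponding to (\ref{mppHJJ2}) and (\ref{mppHJJ4}) and show they are forced by those dual-representation identities, after which the remaining $\rho$-blocks (\ref{mppHJJ1}) and (\ref{mppHJJ3}) reduce to (\ref{mpHJJ1}) and (\ref{mpHJJ2}) respectively. The delicate point is the alignment of the twisting maps $\alpha$ and $(\alpha^{-1})^{\ast}$ and their powers while carrying the $(\alpha^{-2})^{\ast}$ factors through the pairing, since a single mismatch there would silently break the cancellation.
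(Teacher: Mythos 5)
The paper states this proposition with no proof at all, so there is nothing of the author's to compare your route against; I can only judge the plan on its own terms. Your backward implication is correct and clean: since $\rho_1+\lambda_1=(L^{\ast}+R^{\ast})+(-R^{\ast})=L^{\ast}$ and $\rho_2+\lambda_2=\mathcal{L}^{\ast}$, the immediately preceding proposition sends the matched pair of left Hom-pre-Jacobi-Jordan algebras directly to the matched pair $(A^C,(A^{\ast})^C,L^{\ast},\mathcal{L}^{\ast})$ of sub-adjacent Hom-Jacobi-Jordan algebras, and your framing of the forward direction (the coadjoint triples are already representations, so only (\ref{mppHJJ1})--(\ref{mppHJJ4}) must be recovered from (\ref{mpHJJ1})--(\ref{mpHJJ2}) via the pairing) is the right one.

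The gap is in the mechanism you propose to pass from two identities back to four. First, (\ref{mppHJJ2}) and (\ref{mppHJJ4}) are not ``$\lambda$-only blocks forced by dual-representation identities'': written out for the present data, (\ref{mppHJJ4}) contains both the product $\cdot$ on $A$ (in the terms $\alpha_1(x)\cdot(\lambda_2(a)y)$) and the product $\intercal$ on $A^{\ast}$ (through $\lambda_2=-\mathcal{R}^{\ast}$), so it genuinely couples the two algebra structures and carries half the content of the matched pair; it cannot be a consequence of each algebra's axioms and representation axioms taken separately. Second, the ``self-duality relation'' you lean on --- condition (iii), $\mu(\alpha(x))\mu(y)=\mu(\alpha(y))\mu(x)$, of the three-way equivalence proposition --- is not available here: that proposition only asserts that (i), (ii), (iii) are mutually equivalent, and none of them holds for a general regular left Hom-pre-Jacobi-Jordan algebra (for the regular representation, (iii) would force $(z\cdot y)\cdot\alpha(x)=(z\cdot x)\cdot\alpha(y)$ for all $x,y,z$). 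Theorem \ref{drplHJJ} unconditionally makes $(\rho^{\ast}+\lambda^{\ast},-\lambda^{\ast},(\phi^{-1})^{\ast})$ a representation, whereas condition (ii) asks that $(\rho^{\ast},\lambda^{\ast},(\phi^{-1})^{\ast})$ be one; you appear to have conflated these two statements. A correct forward argument must instead pair each of (\ref{mppHJJ1})--(\ref{mppHJJ4}) with a test element, transpose every operator across the canonical pairing (carrying the $(\alpha^{-2})^{\ast}$ twists), and show that the resulting four scalar identities are, as a set, equivalent to the two coming from (\ref{mpHJJ1})--(\ref{mpHJJ2}); the splitting comes from the exchange of the roles of $A$ and $A^{\ast}$ under transposition, not from any of the four conditions being automatic. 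As written, your argument establishes only one of the two implications.
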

\begin{definition}
 Let $(V, \rho,\lambda,\phi)$ is a representation of a left Hom-pre-Jacobi-Jordan algebra $(A,\cdot, \alpha).$ A linear
map $T : V\rightarrow A$ is called an $\mathcal{O}$-operator of $(A,\cdot, \alpha)$ associated to $(V, \rho,\lambda,\phi)$ if
\begin{eqnarray}
 && T\phi=\alpha T\label{rbHpJJ1}\\
 &&T(u)\cdot T(v)=T\Big(\rho(T(u))v+\lambda(T(v)u\Big)\mbox{ for all $u,v\in V$} \label{rbHpJJ2}
\end{eqnarray}
\end{definition}
\begin{example}
 Let $(A, \cdot, \alpha)$ be a left Hom-pre-Jacobi-Jordan algebra and $(V,\rho, \phi)$ be a representation of $(A, \cdot, \alpha).$
It is easy to verify that $A\oplus V$ is a representation of $(A, \cdot, \alpha)$ under the maps $\rho_{A\oplus V}: A\rightarrow gl(A\oplus V)$ defined by
\begin{eqnarray}
 &&\rho_{A\oplus V}(a)(b+v):=a\cdot b+\rho(a)v.\nonumber
\end{eqnarray}
Define the linear map $T: A\oplus V\rightarrow A, a+v\mapsto a.$ Then $T$ is an $\mathcal{O}$-operator of $A$ associated to the representation $(A\oplus V,\rho_{A\oplus V},\alpha\oplus\phi).$
\end{example}
Let give another example of $\mathcal{O}$-operators of left Hom-pre-Jacobi-Jordan algebras.
It is easy to prove:
\begin{proposition}
 If $T$ is an $\mathcal{O}$-operator of a left Hom-pre-Jacobi-Jordan $(A,\cdot, \alpha)$ associated to a representation $(V, \rho,\lambda,\phi)$
then $T$ is an $\mathcal{O}$-operator of its associated Hom-Jacobi-Jordan $(A,\star, \alpha)$ associated to the representation $(V, \rho+\lambda,\phi).$
\end{proposition}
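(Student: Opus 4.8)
The plan is to verify, directly from the hypotheses, the two defining conditions of an $\mathcal{O}$-operator of the Hom-Jacobi-Jordan algebra $(A,\star,\alpha)$ relative to the representation $(V,\rho+\lambda,\phi)$, namely (\ref{rbHJJ1}) and (\ref{rbHJJ2}). First I would note that the fact that $(V,\rho+\lambda,\phi)$ is genuinely a representation of $(A,\star,\alpha)$ is already supplied by Proposition \ref{sumrl}, so no work is needed to set up the underlying representation data; only the two $\mathcal{O}$-operator identities remain.

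The first identity, $T\phi=\alpha T$, is word-for-word condition (\ref{rbHpJJ1}), which $T$ satisfies by assumption, since the Hom-pre and Hom-Jacobi-Jordan notions of $\mathcal{O}$-operator impose the same commutation requirement between $T$, $\phi$ and $\alpha$. Thus it transfers immediately.

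It then remains to establish the multiplicative identity (\ref{rbHJJ2}) for the product $\star$. Here I would expand $T(u)\star T(v)=T(u)\cdot T(v)+T(v)\cdot T(u)$ using the definition (\ref{asHJJ}) of $\star$, apply (\ref{rbHpJJ2}) to each of the two summands (swapping the roles of $u$ and $v$ in the second), and then regroup by outer argument using linearity of $T$:
\begin{eqnarray}
 T(u)\star T(v)&=&T\Big(\rho(T(u))v+\lambda(T(v))u\Big)+T\Big(\rho(T(v))u+\lambda(T(u))v\Big)\nonumber\\
 &=&T\Big((\rho+\lambda)(T(u))v+(\rho+\lambda)(T(v))u\Big).\nonumber
\end{eqnarray}
The right-hand side is exactly (\ref{rbHJJ2}) with $\rho$ replaced by $\rho+\lambda$, which is the required condition.

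I do not expect any substantive obstacle: the entire content is the final regrouping, where the $\rho$-term produced by $T(u)\cdot T(v)$ must be paired with the $\lambda$-term produced by $T(v)\cdot T(u)$ (both acting through $T(u)$), and symmetrically for $T(v)$. The only points to keep straight are which factor carries $\rho$ and which carries $\lambda$ in (\ref{rbHpJJ2}), and the appeal to linearity of $T$ to merge the two applications into a single one.
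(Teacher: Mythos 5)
Your proof is correct and is exactly the direct verification the paper has in mind (the paper omits the proof, declaring it easy): condition (\ref{rbHJJ1}) is literally (\ref{rbHpJJ1}), and the regrouping $T\big(\rho(T(u))v+\lambda(T(v))u\big)+T\big(\rho(T(v))u+\lambda(T(u))v\big)=T\big((\rho+\lambda)(T(u))v+(\rho+\lambda)(T(v))u\big)$ checks out, with Proposition \ref{sumrl} supplying that $(V,\rho+\lambda,\phi)$ is indeed a representation of $(A,\star,\alpha)$. Nothing further is needed.
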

As Hom-associative algebras case \cite{tcsmam}, let give some characterizations of $\mathcal{O}$-operators on left Hom-pre-Jacobi-Jordan algebras.
\begin{proposition}
 A linear map $T : V\rightarrow A$ is an $\mathcal{O}$-operator associated to a representation $(V,\rho,\phi)$ of a left Hom-pre-Jacobi-Jordan  algebra $(A, \cdot, \alpha)$ if and only if the graph of $T,$
$$G_r(T):=\{(T(v), v), v\in  V\}$$
is a subalgebra of the semi-direct product algebra $A\ltimes V.$
\end{proposition}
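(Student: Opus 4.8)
The plan is to unwind the definition of a Hom-subalgebra of the semi-direct product $A\ltimes V$ from Proposition \ref{spHpJJ}, recalling that in the Hom setting a subalgebra of $(A\oplus V,\diamond,\alpha\oplus\phi)$ is a subspace closed under \emph{both} the product $\diamond$ and the twisting map $\alpha\oplus\phi$. I would check these two closure requirements separately and match each against one of the defining identities (\ref{rbHpJJ1}) and (\ref{rbHpJJ2}) of an $\mathcal{O}$-operator.

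First I would test invariance under the twisting map. For $v\in V$, applying $\alpha\oplus\phi$ to the generator $(T(v),v)$ of $G_r(T)$ yields $\alpha(T(v))+\phi(v)$. This element lies in $G_r(T)$ precisely when its first component is $T$ of its second component, i.e. when $\alpha(T(v))=T(\phi(v))$ for all $v$, which is exactly $\alpha T=T\phi$, condition (\ref{rbHpJJ1}).

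Next I would test closure under $\diamond$. Using the semi-direct product multiplication, for $u,v\in V$ the product of the two generators is
\begin{eqnarray}
 (T(u)+u)\diamond(T(v)+v)=T(u)\cdot T(v)+\Big(\rho(T(u))v+\lambda(T(v))u\Big).\nonumber
\end{eqnarray}
Its second component $\rho(T(u))v+\lambda(T(v))u$ already lies in $V$, so this product belongs to $G_r(T)$ if and only if its first component equals $T$ applied to the second, that is
\begin{eqnarray}
 T(u)\cdot T(v)=T\Big(\rho(T(u))v+\lambda(T(v))u\Big),\nonumber
\end{eqnarray}
which is precisely condition (\ref{rbHpJJ2}).

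Combining the two equivalences gives the statement: $T$ satisfies both (\ref{rbHpJJ1}) and (\ref{rbHpJJ2}) if and only if $G_r(T)$ is stable under $\alpha\oplus\phi$ and under $\diamond$, i.e. if and only if $G_r(T)$ is a Hom-subalgebra of $A\ltimes V$. Since each step is a direct substitution into the defining formulas, there is no genuine computational obstacle; the only point requiring care is recalling that, in the Hom framework, being a subalgebra demands invariance under the twisting map as well as closure under the product, and it is this extra requirement that produces condition (\ref{rbHpJJ1}) alongside (\ref{rbHpJJ2}).
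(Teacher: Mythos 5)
Your argument is correct and is exactly the expected one: the paper states this proposition without proof, and your two-step verification (invariance of $G_r(T)$ under $\alpha\oplus\phi$ giving $T\phi=\alpha T$, and closure under $\diamond$ in the semi-direct product of Proposition \ref{spHpJJ} giving $T(u)\cdot T(v)=T(\rho(T(u))v+\lambda(T(v))u)$) is precisely the intended justification. Your observation that the twist-invariance requirement is what forces condition (\ref{rbHpJJ1}) is the one point of substance, and you handle it correctly.
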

The following result shows that an $\mathcal{O}$-operator can be lifted up the
Rota-Baxter operator.
\begin{proposition}
 Let $(A, \cdot, \alpha)$ be a left Hom-pre-Jacobi-Jordan algebra, $(V,\rho,\phi)$ be a representation of $A$ and
$T : V\rightarrow A$ be a linear map. Define 
$\widehat{T}\in End(A\oplus V)$ by 
$\widehat{T}(a+v):=T(v).$ Then T is an $\mathcal{O}$-operator associated to $(V,\rho,\phi)$
if and only if $\widehat{T}$ is a Rota-Baxter operator on $A\oplus V.$
\end{proposition}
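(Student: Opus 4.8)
The plan is to realize both notions on the semi-direct product $A\ltimes V$ built in Proposition \ref{spHpJJ}, whose multiplication is $(x+u)\diamond(y+v)=x\cdot y+(\rho(x)v+\lambda(y)u)$ and whose twisting map is $\alpha\oplus\phi$. Since $A\ltimes V$ is a left Hom-pre-Jacobi-Jordan algebra, it is in particular a multiplicative Hom-algebra, so the Rota-Baxter equation (\ref{Rota-Baxt}) (with weight $\lambda=0$) is meaningful for $\widehat{T}$. First I would record the effect of $\widehat{T}$ on elements: because $\widehat{T}$ annihilates the $A$-summand and sends $v\in V$ to $T(v)\in A\subset A\oplus V$, the image $\widehat{T}(a+v)=T(v)$ lies in $A$ and thus has vanishing $V$-component. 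This single observation controls the whole computation.

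Next I would evaluate the two sides of (\ref{Rota-Baxt}) on $X=a+u$ and $Y=b+v$. On the left, $\widehat{T}(X)\diamond\widehat{T}(Y)=T(u)\diamond T(v)=T(u)\cdot T(v)$, an element of $A$ with no $V$-part, since the $\rho$ and $\lambda$ contributions vanish as $\widehat{T}(X),\widehat{T}(Y)$ have zero $V$-component. On the right I would first form $\widehat{T}(X)\diamond Y+X\diamond\widehat{T}(Y)=\big(T(u)\cdot b+a\cdot T(v)\big)+\big(\rho(T(u))v+\lambda(T(v))u\big)$, and then apply $\widehat{T}$, which discards the $A$-component $T(u)\cdot b+a\cdot T(v)$ and applies $T$ to the $V$-component, giving $T\big(\rho(T(u))v+\lambda(T(v))u\big)$. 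Equating the two sides collapses the Rota-Baxter relation for $\widehat{T}$ to $T(u)\cdot T(v)=T\big(\rho(T(u))v+\lambda(T(v))u\big)$, which is exactly the $\mathcal{O}$-operator identity (\ref{rbHpJJ2}); the equivalence is term-by-term and uses no Jacobi-type cancellation.

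Finally I would handle the compatibility with the twisting map. A Rota-Baxter operator on a Hom-algebra is understood to commute with the structure map, and here $\widehat{T}\circ(\alpha\oplus\phi)(a+v)=T\phi(v)$ while $(\alpha\oplus\phi)\circ\widehat{T}(a+v)=\alpha T(v)$; hence $\widehat{T}(\alpha\oplus\phi)=(\alpha\oplus\phi)\widehat{T}$ is equivalent to $T\phi=\alpha T$, i.e. to condition (\ref{rbHpJJ1}). Combining this with the previous equivalence yields the asserted ``if and only if''.

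The main obstacle I anticipate is bookkeeping rather than anything conceptual: one must track carefully which summand each term occupies and remember that $\widehat{T}$ erases every $A$-component, so that the products $T(u)\cdot b$ and $a\cdot T(v)$, which would otherwise block a clean match, disappear before the two sides are compared. A secondary point worth stating explicitly is that the proposition tacitly uses the full representation datum $(V,\rho,\lambda,\phi)$ (and its semi-direct product), and that it is precisely the twisting-map compatibility of $\widehat{T}$ that encodes (\ref{rbHpJJ1}); the purely multiplicative Rota-Baxter equation by itself recovers only (\ref{rbHpJJ2}).
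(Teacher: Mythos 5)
Your computation is correct, and the paper in fact states this proposition without any proof, so there is nothing to compare against: your argument is the evidently intended one, reducing the Rota--Baxter identity for $\widehat{T}$ on the semi-direct product $A\ltimes V$ to the single identity (\ref{rbHpJJ2}) by tracking which summand each term lives in. Your closing caveat is a genuine and worthwhile observation: the paper's Definition of a Rota--Baxter operator on a Hom-algebra imposes only the multiplicative identity (\ref{Rota-Baxt}) and says nothing about commuting with the twisting map, so under a literal reading the equivalence recovers only (\ref{rbHpJJ2}) and not (\ref{rbHpJJ1}); one must either add the requirement $\widehat{T}\circ(\alpha\oplus\phi)=(\alpha\oplus\phi)\circ\widehat{T}$ to the notion of Rota--Baxter operator, as you do, or weaken the proposition accordingly. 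The same remark applies to your implicit correction that the representation datum must be the full quadruple $(V,\rho,\lambda,\phi)$ for the semi-direct product to be defined.
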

In the sequel, let give  some results about $\mathcal{O}$-operators.
\begin{proposition}\label{lHLPRB}
 Let $T$ be an $\mathcal{O}$-operator on a left Hom-pre-Jacobi-Jordan algebra $(A, \cdot, \alpha)$ with respect to a representation  $(V,\rho,\phi ).$ If define a map $\diamond$ by 
 \begin{eqnarray}
 &&u\intercal v:=\rho(T(u))v+\lambda(T(v))u \mbox{ for all $(u, v)\in V^{\times 2}$}\nonumber
 \end{eqnarray}
then, $(V,\intercal, \phi)$ is a left Hom-pre-Jacobi-Jordan algebra. Moreover, 
$T$ is a morphism from the left Hom-pre-Jacobi-Jordan algebra 
 $(V,\intercal,\phi)$ to the initial left Hom-pre-Jacobi-Jordan algebra $(A, \cdot, \alpha).$
\end{proposition}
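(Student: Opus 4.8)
The plan is to check the three requirements defining a left Hom-pre-Jacobi-Jordan structure on $(V,\intercal,\phi)$ together with the morphism claim, using only the $\mathcal{O}$-operator identities (\ref{rbHpJJ1})--(\ref{rbHpJJ2}) and the representation axioms (\ref{rHJJ1}), (\ref{rHJJ2}), (\ref{rppHJJ1}), (\ref{rppHJJ2}).

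First I would record the morphism identity $T(u\intercal v)=T(u)\cdot T(v)$, which is nothing but (\ref{rbHpJJ2}); this is immediate and will be the engine of the whole argument, since it lets me convert $T$ of any $\intercal$-product into a $\cdot$-product. Next I would verify multiplicativity of $\phi$: expanding $\phi(u\intercal v)=\phi\rho(T(u))v+\phi\lambda(T(v))u$ and pushing $\phi$ through $\rho$ and $\lambda$ by (\ref{rHJJ1}) and (\ref{rppHJJ1}), then using $T\phi=\alpha T$ from (\ref{rbHpJJ1}), rewrites this as $\rho(T\phi(u))\phi(v)+\lambda(T\phi(v))\phi(u)=\phi(u)\intercal\phi(v)$.

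The core of the proof is the left skew-symmetry of the anti-Hom-associator, i.e. $as_{\phi}^t(u,v,w)+as_{\phi}^t(v,u,w)=0$. I would expand $as_{\phi}^t(u,v,w)=(u\intercal v)\intercal\phi(w)+\phi(u)\intercal(v\intercal w)$ termwise, each time replacing $T$ of an inner $\intercal$-product by the matching $\cdot$-product (morphism property) and $T\phi(w)$ by $\alpha T(w)$. This yields six terms; the $u\leftrightarrow v$ swap gives six more. These split into three blocks that vanish separately: the two terms of shape $\rho(T(u)\cdot T(v))\phi(w)$ together with the two $\rho(\alpha T(\cdot))\rho(T(\cdot))w$ terms cancel by (\ref{rHJJ2}) applied to $x=T(u),\,y=T(v)$ via $T(u)\star T(v)=T(u)\cdot T(v)+T(v)\cdot T(u)$; the remaining eight $\lambda$-terms regroup into one block acting on $v$ and one acting on $u$, each being exactly an instance of (\ref{rppHJJ2}) with $(x,y)=(T(u),T(w))$ and $(x,y)=(T(v),T(w))$ respectively.

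The only real difficulty is bookkeeping: after the swap one must keep precise track of which of $u,v,w$ each composite operator lands on, so as to recognize each block as a single application of (\ref{rHJJ2}) or (\ref{rppHJJ2}). No new identity is needed, and the \emph{Moreover} clause is already discharged by the morphism identity established at the outset.
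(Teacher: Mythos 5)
Your proposal is correct and follows essentially the same route as the paper: expand $as_{\phi}^t(u,v,w)+as_{\phi}^t(v,u,w)$ into twelve terms using $T(u\intercal v)=T(u)\cdot T(v)$ and $T\phi=\alpha T$, then group them into one block killed by (\ref{rHJJ2}) and two blocks killed by (\ref{rppHJJ2}), with the morphism claim being (\ref{rbHpJJ2}) itself. The only difference is that you also verify the multiplicativity $\phi(u\intercal v)=\phi(u)\intercal\phi(v)$ explicitly, a step the paper leaves tacit.
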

\begin{proof}
For all $u,v,w\in V,$ rearranging terms after straightforward computations, we obtain
\begin{eqnarray}
 && as_{\phi}^t(u,v,w)+as_{\phi}^t(v,u,w)=
 (u\intercal v)\intercal\phi(w)+\phi(u)\intercal(v\intercal w)+(v\intercal u)\intercal\phi(w)+\phi(v)\intercal(u\intercal w)\nonumber\\
 &&=
 \Big(\rho(T(u)\cdot T(v))\phi(w)+\rho(T(v)\cdot T(u))\phi(w)+\rho(T(\phi(u)))\rho(T(v))w+\rho(T(\phi(v)))\rho(T(u))w\Big)\nonumber\\
 &&+\Big(\lambda(T(\phi(w)))\lambda(T(v))u+ \lambda(T(v)\cdot T(w))\phi(u)+\lambda(T(\phi(w)))\rho(T(v))u+\rho(T(\phi(v)))\lambda(T(w))u\Big)\nonumber\\
 &&+\Big(\lambda(T(\phi(w)))\lambda(T(u))v+ \lambda(T(u)\cdot T(w))\phi(v)+\lambda(T(\phi(w)))\rho(T(u))v+\rho(T(\phi(u)))\lambda(T(w))v\Big)\nonumber\\
 &&=0 \mbox{ ( by  (\ref{rbHpJJ1}), (\ref{rHJJ2}) and (\ref{rppHJJ2}).}\nonumber
\end{eqnarray}
Hence, $(V,\intercal, \phi)$ is a left Hom-pre-Jacobi-Jordan algebra. The second assertion follows from (\ref{rbHpJJ2}) and the definition of $\intercal.$
\end{proof}
In order to give another characterization of $\mathcal{O}$-operators, let introduce the following:
\begin{definition}\label{njpHJJ}
 Let $(A, \cdot, \alpha)$ be a left Hom-pre-Jacobi-Jordan algebra. A linear map 
 $N : A\rightarrow A$ is
said to be a Nijenhuis operator if $N\alpha=\alpha N$ and its Nijenhuis torsions vanish, i.e.,
\begin{eqnarray}
 && N(x)\cdot N(y)=N(N(x)\cdot y + x\cdot N(y)-N(x\cdot y)), \mbox{ for all $x, y\in A,$}\nonumber
\end{eqnarray}
Observe that the deformed multiplications
$\ast_N: A\oplus A\rightarrow A$ given by
\begin{eqnarray}
 && x\cdot_N y:= N(x)\cdot y+x\cdot N(y)-N(x\cdot y),\nonumber
\end{eqnarray}
gives rise to a new left Hom-pre-Jacobi-Jordan multiplication on $A,$ and $N$ becomes a 
morphism from the left Hom-pre-Jacobi-Jordan algebra $(A,\cdot_N,\alpha)$ to the initial left Hom-pre-Jacobi-Jordan algebra $(A, \cdot, \alpha).$
\end{definition}
Now, we can esealy check the following result.
\begin{proposition}
 Let $\mathcal{A}:=(A, \cdot, \alpha)$ be a left Hom-pre-Jacobi-Jordan algebra and $\mathcal{V}:=(V,\rho, \phi)$ be a representation of $(A, \cdot, \alpha).$
 A linear map $T: V\rightarrow A$ is an
  $\mathcal{O}$-operator on $\mathcal{A}$ with respect to the
$\mathcal{V}$ if and only if $N_T:=\left(
\begin{array}{cc}
 0& T\\
 0& 0
\end{array}
\right)
: A\oplus V \rightarrow A\oplus V$ is a Nijenhuis operator on
the left Hom-pre-Jacobi-Jordan algebra $A\oplus V.$
\end{proposition}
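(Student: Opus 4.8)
The plan is to exploit the semi-direct product structure $A\ltimes V=(A\oplus V,\diamond,\alpha\oplus\phi)$ furnished by Proposition \ref{spHpJJ}, and to compute the two defining conditions of a Nijenhuis operator (Definition \ref{njpHJJ}) directly for $N_T$, showing that each is equivalent to one of the two defining conditions (\ref{rbHpJJ1})--(\ref{rbHpJJ2}) of an $\mathcal{O}$-operator. The first observation I would record is that $N_T$ acts on $A\oplus V$ by $N_T(a+v)=T(v)$, so its image lies in $A$ and $N_T^2=0$; in particular every product $N_T(X)\diamond N_T(Y)$ is computed between pure $A$-elements and contributes nothing to the $V$-slot.

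First I would verify the compatibility with the twisting map. For $X=x+u$ one has $N_T(\alpha\oplus\phi)(x+u)=T\phi(u)$ while $(\alpha\oplus\phi)N_T(x+u)=\alpha(T(u))$, so the relation $N_T(\alpha\oplus\phi)=(\alpha\oplus\phi)N_T$ holds if and only if $T\phi=\alpha T$, which is exactly (\ref{rbHpJJ1}).

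Next I would expand the Nijenhuis torsion. Writing $X=x+u$ and $Y=y+v$ and using the explicit formula for $\diamond$, the elements $N_T(X)\diamond Y$, $X\diamond N_T(Y)$ and $N_T(X\diamond Y)$ are read off componentwise; their alternating sum has $A$-component $T(u)\cdot y+x\cdot T(v)-T(\rho(x)v+\lambda(y)u)$ and $V$-component $\rho(T(u))v+\lambda(T(v))u$. Applying $N_T$ annihilates the $A$-component and carries the $V$-component into $A$, so the right-hand side of the torsion identity collapses to the pure $A$-element $T\big(\rho(T(u))v+\lambda(T(v))u\big)$, whereas the left-hand side $N_T(X)\diamond N_T(Y)=T(u)\cdot T(v)$ is likewise a pure $A$-element. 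Hence the torsion vanishes if and only if $T(u)\cdot T(v)=T\big(\rho(T(u))v+\lambda(T(v))u\big)$ for all $u,v\in V$, which is precisely (\ref{rbHpJJ2}).

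Since the argument is purely a matter of tracking the $A$- and $V$-components through the semi-direct multiplication, no genuine obstacle arises; the only care needed is to keep the two slots separated, and in particular to remember that $N_T$ kills the $A$-component, so that the $N_T(X\diamond Y)$ term feeds back only through the $V$-component $\rho(x)v+\lambda(y)u$ of $X\diamond Y$. Once both conditions have been matched, the equivalence follows at once.
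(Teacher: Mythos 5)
Your computation is correct and is precisely the verification the paper intends: the paper states this result with no proof (only the remark that it "can easily be checked"), and your componentwise analysis of the semi-direct product $A\ltimes V$ — matching $N_T(\alpha\oplus\phi)=(\alpha\oplus\phi)N_T$ with $T\phi=\alpha T$ and the vanishing of the Nijenhuis torsion with $T(u)\cdot T(v)=T\bigl(\rho(T(u))v+\lambda(T(v))u\bigr)$ — fills in exactly the omitted details. Note only that the representation should be read as the quadruple $(V,\rho,\lambda,\phi)$ (the $\lambda$ is dropped in the statement by an apparent typo), which is how you have correctly used it.
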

 \begin{proposition}
  If $N$ is a Nijenhuis operator on a left Hom-pre-Jacobi-Jordan algebra $(A,\cdot, \alpha),$ then $N$ is a
Nijenhuis operator on the sub-adjacent Hom-Jacobi-Jordan algebra $A^C.$
 \end{proposition}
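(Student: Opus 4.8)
The plan is to verify the two defining conditions of a Nijenhuis operator on the sub-adjacent Hom-Jacobi-Jordan algebra $A^C=(A,\star,\alpha)$ directly from the hypotheses on $(A,\cdot,\alpha)$, exploiting the fact that $\star$ is built additively from $\cdot$ via $x\star y=x\cdot y+y\cdot x$. The commutation relation $N\alpha=\alpha N$ is literally the same in both notions, so nothing is required there; the entire content is to establish the vanishing of the Nijenhuis torsion relative to $\star$.

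First I would expand $N(x)\star N(y)=N(x)\cdot N(y)+N(y)\cdot N(x)$ and apply the $\cdot$-Nijenhuis identity to each summand separately, obtaining $N(x)\cdot N(y)=N\big(N(x)\cdot y+x\cdot N(y)-N(x\cdot y)\big)$ together with the analogous expression with $x$ and $y$ interchanged. Since $N$ is linear, the sum of the two right-hand sides equals $N$ applied to the sum of the six resulting inner terms, namely $N(x)\cdot y+x\cdot N(y)-N(x\cdot y)+N(y)\cdot x+y\cdot N(x)-N(y\cdot x)$.

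The key step is then to regroup these six terms into $\star$-expressions: one pairs $N(x)\cdot y+y\cdot N(x)=N(x)\star y$; one pairs $x\cdot N(y)+N(y)\cdot x=x\star N(y)$, using the commutativity of $\star$; and one pairs $-N(x\cdot y)-N(y\cdot x)=-N(x\star y)$, using the linearity of $N$ together with $x\star y=x\cdot y+y\cdot x$. This produces exactly $N(x)\star N(y)=N\big(N(x)\star y+x\star N(y)-N(x\star y)\big)$, which is precisely the Nijenhuis torsion condition for $\star$. I do not anticipate any genuine obstacle: the argument is purely formal, resting only on the symmetry of $\star$ and the additivity of both $\star$ and $N$, and makes no appeal to the left Hom-pre-Jacobi-Jordan identity itself (which is used only, via Proposition~\ref{lpHJJHJJ}, to know that $A^C$ is indeed a Hom-Jacobi-Jordan algebra).
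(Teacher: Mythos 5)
Your proposal is correct and follows essentially the same route as the paper: expand $N(x)\star N(y)$ as $N(x)\cdot N(y)+N(y)\cdot N(x)$, apply the $\cdot$-Nijenhuis identity to each summand, and regroup the six inner terms by linearity of $N$ and commutativity of $\star$ into $N\bigl(N(x)\star y+x\star N(y)-N(x\star y)\bigr)$. Your explicit remark that the Hom-pre-Jacobi-Jordan identity itself plays no role (beyond guaranteeing that $A^C$ is a Hom-Jacobi-Jordan algebra) is accurate and slightly more careful than the paper's write-up.
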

\begin{proof}
 Since $N$ is a Nijenhuis operator on $(A,\cdot,\alpha),$ we have $N\alpha=\alpha N.$ For all $x, y \in A,$ by
Definition \ref{njpHJJ}, we obtain
\begin{eqnarray}
 &&N(x)\star N(y)= N(x)\cdot N(y)+N(y)\cdot N(x)\nonumber\\
 &&=N(N(x)\cdot y + x\cdot N(y)-N(x\cdot y)+ (N(y)\cdot x + y\cdot N(x)-N(y\cdot x))\nonumber\\
 &&=N(N(x)\star y+y\star N(x)-N(x\star y)).\nonumber
\end{eqnarray}
\end{proof}

\vspace*{1cm}
Sylvain Attan\\
 D\'epartement de Math\'ematiques, Universit\'{e} d'Abomey-Calavi
01 BP 4521, Cotonou 01, B\'enin. E.mail: syltane2010@yahoo.fr

\begin{thebibliography}{99}
\bibitem{sa1} S. Attan, H. Hounnon, B. Kpamègan, {\it Hom-Jordan and Hom-
alternative bimodules,} Extracta Math. {\bf 35}(2020), No 1, 69-97.
\bibitem{sa2} S. Attan, H. Hounnon, B. Kpamègan, {\it Representations and formal deformations of Hom-Leibniz algebras,} Asian-European Journal of Mathematics (2020) 2150013 (14 pages).
\bibitem{absb} Amir Baklouti and Saïd Benayadi, {\it Symplectic Jacobi-Jordan algebras,} Linear and Multilinear Algebra (2019),  DOI: 10.1080/03081087.2019.1626334
\bibitem{gb} G. Baxter, {\it An analytic problem whose solution follows from a simple algebraic identity,} Pacific J. Math. {\bf 10} (1960), 731-742.
\bibitem{sbam} S. Benayadi, A. Makhlouf, {\it Hom-Lie algebras with symmetric invariant nondegenerate bilinear forms.} J Geom Phys,
{\bf 76}(2014), 38-60.
\bibitem{dbaf} D. Burde, A. Fialowski, {\it Jacobi-Jordan algebras.} Linear Algebra Appl. {\bf 459}(2014), 586-594.
\bibitem{pc} P. Cartier, {\it On the structure of free Baxter algebras,}  Adv. Math {\bf 9}(1972), 253-265.
\bibitem{ysc} Y. S. Cheng and Y. C. Su, {\it (Co)homology and universal central extensions of Hom-Leibniz algebras,} Acta Math. Sin. (Engl. Ser.) {\bf 27} (2011) 813-830.
\bibitem{tcsmam} T. Chtioui, S. Mabrouk, A. Makhlouf, {\it Cohomology and deformations of $\mathcal{O}$-operators on
Hom-associative algebras}, arXiv: 2104.10724 v1.
\bibitem{acdk} A. Connes and D. Kreimer, {\it Renormalization in quantum field theory and the Riemann-Hilbert problem. I. The Hopf algebra structure of graphs and the main
theorem,} Commun. Math. Phys. {\bf 210} (2000), nO. 1, 249-273.
\bibitem{cedh} C. Essossolim Haliya, G. D. Houndedji, {\it On a pre-Jacobi-Jordan algebra: relevant properties and double construction}, arXiv: 2007.06736v1.
\bibitem{egmk} E. Getzler and M. Kapranov, {\it Cyclic operads and cyclic homology,} Geometry, Topology and Physics for Raoul
Bott (ed. S.-T. Yau), International Press, 1995, 167-201.
\bibitem{lg} L. Guo, {\it An Introduction to Rota-Baxter Algebra,} Surveys of Modern Mathematics Vol. 4 (International Press; Higher Education Press, Somerville, MC; Beijing, 2012). 
\bibitem{lgwk} L. Guo and W. Keigher, {\it Baxter algebras and shuffle products,} Adv. Math. {\bf 150}(2000), nO. 1, 117-149.
\bibitem{HAR1} J. T. Hartwig, D. Larsson and S. D. Silvestrov, {\it Deformations of Lie algebras using $\sigma$-derivations,} J. Algebra, \textbf{292} (2006), 314-361.
\bibitem{nj} N. Jacobson, {\it Structure and Representations of Jordan Algebras,} AMS, 1968.
\bibitem{pjjvn} P. Jordan, J. von Newmann, E. Wigner, {\it On algebraic generalization of the quantum machanical formaism.} Ann Math. {\bf 35}(1934); 29-64.
\bibitem{bak} B. A. Kupershmidt, {\it What a classical r-matrix really is,} J. Nonlinear Math. Phys. {\bf 6} (1999), nO. 4, 448-488.
\bibitem{dlsds1} D. Larsson, S.D. Silvestrov, {\it Quasi-Hom-Lie algebras, central extensions and 2-cocycle-like identities,} J. Algebra {\bf 288} (2005), 321-344.
\bibitem{dlsds2} D. Larsson, S.D. Silvestrov, {\it Quasi-Lie algebras,} in "Noncommutative Geometry and Representation Theory in Mathematical Physics", Contemp.
Math., 391, Amer. Math. Soc., Providence, RI, 2005, 241-248.
\bibitem{slls} S. Liu, L. Song, T. Rong, {\it Representations and cohomologies of regular Hom-pre-Lie
algebras,} J. Algebra Appl. {\bf 19}(2020), art. no. 2050149.
\bibitem{am} A. Makhlouf, {\it Hom-alternative algebras and Hom-Jordan algebras,} Int. Electron.
J. Alg. {\bf 8} (2010), 177-190.

\bibitem{MAK3} A. Makhlouf, S. D. Silvestrov, {\it Hom-algebra structures,} J. Gen. Lie Theory Appl. {\bf 2} (2008), 51-64.
\bibitem{sonk} S. Okubo and N. Kamiya, {\it Jordan-Lie super algebra and Jordan-Lie triple system,} J. Algebra {\bf 198} (1997),388-411.
\bibitem{gcr} G. C. Rota, {\it Baxter algebras and combinatorial identities, I, II,}  Bull. Am. Math. Soc. {\bf 75} (1969), 330-334.
\bibitem{ys} Y. Sheng, {\it Representation of Hom-Lie algebras,} Algebr. Represent. Theory {\bf 15} (2012), 1081-1098.
\bibitem{sw} S. Walcher, {\it On algebras of rank three,} Comm. Algebra {\bf 27 } (1999), 3401-3438.
\bibitem{dy1} D. Yau, {\it Hom-Novikov algebras,} J. Phys. A {\bf 44} (2011), 085202.
\bibitem{dy2} D. Yau, {\it Hom-Maltsev, Hom-alternative, and Hom-Jordan algebras,} Int. Electron. J. Algebra {\bf 11} (2012), 177-217.
\bibitem{kaz} K. A. Zhevlakov, {\it Solvability and nilpotence of Jordan rings,} Algebra i Logika {\bf 5} (1966),
nO $3,$ 37-58 (in Russian).
\bibitem{pzu} P. Zumanovich, {\it Special and exceptional Mock-Lie algebras,} Linear Algebra and its Applications  {\bf 518}(2017), 79-96.
\end{thebibliography}
\end{document}